\newtheorem{lem}{Lemma}[section]
\newtheorem{prop}[lem]{Proposition}%[section]
\newtheorem{thm}[lem]{Theorem}%[section]
\newtheorem{cor}[lem]{Corollary}
\newtheorem*{question1}{Question 1}
\newtheorem*{question2}{Question 2}
\newtheorem*{question3}{Question 3}
\newtheorem*{conjecture}{Conjecture}
\def\phi{\varphi}
\theoremstyle{definition}
\newtheorem{rem}[lem]{Remark}
\newtheorem{example}[lem]{Example}
\def\mysection#1{\section{#1}\setcounter{equation}{0}}
\def\CCC{{\mathbb C}}
\def\TT{{\mathcal T}}
\def\NNN{{\mathbb N}}
\def\RRR{{\mathbb R}}
\def\DDD{{\mathbb D}}
\def\TTT{{\mathbb T}}
\def\ZZZ{{\mathbb Z}}
\def\UU{{\mathcal U}}
\def\FF{\mathcal F}
\def\Th{\Theta}
\def\la{\lambda}
\def\<{\langle}
\def\>{\rangle}
\def\Im{\mathop{\rm Im}\nolimits}
\def\Re{\mathop{\rm Re}\nolimits}
\def\Im{\mathop{\rm Im}\nolimits}
\def\clos{\mathop{\rm clos}\nolimits}
\def\supp{\mathop{\rm supp}\nolimits}
\def\htl{h^\Theta_{\lambda}}
\def\ktl{k^\Theta_{\lambda}}
\def\httl{\tilde h^\Theta_{\lambda}}
\def\kttl{\tilde k^\Theta_{\lambda}}
\def\om{\omega}
\def\Id{\mathrm{Id}}
\newcommand{\Ath}{A^\Theta}
\newcommand{\Kth}{K_\Theta}
\newcommand\Rea{\mathop{\rm Re}\nolimits}
\title[Truncated Toeplitz operators]{Bounded symbols and Reproducing Kernel 
Thesis for truncated Toeplitz operators}
\author[A. Baranov]{Anton Baranov}
\author[I. Chalendar]{Isabelle Chalendar}
\author[E. Fricain]{Emmanuel Fricain}
\author[J. Mashreghi]{Javad Mashreghi}
\author[D. Timotin]{Dan Timotin}
\address[A. Baranov]{Department of Mathematics and Mechanics, St. Petersburg State
University, 28, Universitetskii pr., St. Petersburg, 198504, Russia}
\email{a.baranov@ev13934.spb.edu}
\address[I. Chalendar and E. Fricain]{Universit\'e de Lyon;
Universit\'e Lyon 1; INSA de Lyon; Ecole Centrale de Lyon;
CNRS, UMR5208, Institut Camille Jordan; 43 bld. du 11 novembre 1918,
F-69622 Villeurbanne Cedex, France}
\email{chalenda@math.univ-lyon1.fr, fricain@math.univ-lyon1.fr}
\address[J. Mashreghi]{D\'epartement de Math\'ematiques et de Statistique,
         Universit\'e Laval,
         Qu\'ebec, QC,
         Canada G1K 7P4.}
\email{javad.mashreghi@mat.ulaval.ca}
\address[D. Timotin]{Institute of Mathematics of the 
Romanian Academy, PO Box 1-764, Bucharest 014700, Romania}
\email{Dan.Timotin@imar.ro}
\keywords{Toeplitz operators, Reproducing Kernel Thesis, model spaces.}
\thanks{This work was partially supported by funds from NSERC (Canada),
Centre International de Rencontres Mathématiques (France) and RFBR (Russia). }
\subjclass[2000]{Primary: 47B35, 47B32. Secondary: 46E22}
\begin{document}
\begin{abstract}
Compressions of Toeplitz operators to coinvariant subspaces of $H^2$ 
are called \emph{truncated Toeplitz operators}. We study two questions 
related to these operators. The first, raised by Sarason, is whether 
boundedness of the operator implies the existence of a bounded symbol; 
the second is the Reproducing Kernel Thesis. We show that in general 
the answer to the first question is negative, and we exhibit some 
classes of spaces for which the answers to both questions are positive.
\end{abstract}

\renewcommand{\baselinestretch}{1.3}
\normalsize
\bibliographystyle{acm}
\maketitle
\mysection{Introduction}\label{introduction}

Truncated Toeplitz operators on model spaces have been formally 
introduced by Sarason in~\cite{Sarason}, although special cases 
have long ago appeared in literature, most notably as  model 
operators for contractions with defect numbers one and for their 
commutant. They are naturally related to the classical 
Toeplitz and Hankel operators on the Hardy space. This is a new 
area of study, and it is remarkable that many simple questions 
remain still unsolved. As a basic reference for their main 
properties,~\cite{Sarason} is invaluable; further study can be 
found in~\cite{Cima-2009,Cima-2008,Ross-Garcia} 
and in~\cite[Section 7]{Sarason1}.

The truncated Toeplitz operators live on the model spaces $\Kth$. These are subspaces of $H^2$ (see Section~\ref{se:preliminaries} 
for precise definitions) that have attracted attention in the last decades; they are relevant in
various subjects such as for instance spectral theory for general linear operators \cite{Nik86}, control theory \cite{Nikolski-controle}, and Nevanlinna domains connected to rational approximation \cite{Fedorovskii}. 
Given a model space $\Kth$ and a function $\phi\in L^2$, the truncated 
Toeplitz operator $\Ath_\phi$ is  defined on a dense subspace of 
$\Kth$ as the compression to $\Kth$ of multiplication by $\phi$. 
The function $\phi$ is then called a symbol of the operator, and 
it is never uniquely defined.

In the particular case where $\phi\in L^\infty$ the operator $\Ath_\phi$ is bounded. 
In view of well-known facts about classical Toeplitz and Hankel 
operators, it is natural to ask whether the converse is true, 
that is, if a bounded truncated Toeplitz operator has necessarily 
a bounded symbol. This question has been posed in~\cite{Sarason}, 
where it is noticed that it is nontrivial even for rank one operators. 
In the present paper we will provide a class of inner functions $\Theta$ 
for which there exist rank one truncated Toeplitz operators 
on $\Kth$ without bounded symbols. On the other hand, we obtain positive results 
for some basic examples of model spaces. Therefore  
the situation  is quite different 
from the classical Toeplitz and Hankel operators.

The other natural question that we address is the Reproducing Kernel 
Thesis for truncated Toeplitz operators. Recall that an operator  on a  
reproducing kernel Hilbert space is said to satisfy the \emph{Reproducing 
Kernel Thesis} (RKT) if
its boundedness is determined  by its behaviour on the reproducing
kernels. This property has  been studied for several 
classes of operators:  Hankel and Toeplitz operators on
the Hardy space of the unit disc ~\cite{Bon,Holland-Walsh88,Smith05},  Toeplitz
operators on the Paley--Wiener space~\cite{martin},   semicommutators of Toeplitz operators~\cite{Nik86},
Hankel operators on the Bergman space~\cite{axler,Harper-Smith}, and Hankel operators on the Hardy space of the bidisk~\cite{ferguson,pott-sadoski}. %There exists some characterizations of bounded,
% compact and Schatten class Hankel or Toeplitz operators  in terms of
% functional analytic properties of their symbols (see
% \cite{Nehari,Peller,Hartman}). Unfortunately, these conditions can
% be rather difficult to be checked in practice. In an attempt to find
% criteria which are easier to verify, the notion of test functions
% has been introduced and the reproducing kernels appear to be one of
% the most natural test family (see
% \cite{Bon,Bonsall94,Holland-Walsh,Jacob-Partington-Pott,Pott-Smith-Walsh}
% for others sets of test functions). Reproducing kernels as test
% functions have also been used in control theory in order to find
% accessible criteria for certain important properties of observation
% operators, see e.g. \cite{Partington-Weiss}.  Thus it appears natural to address the following question.
It appears thus natural to ask the corresponding question 
for truncated Toeplitz operators. We will show that in this 
case it is more appropriate to assume the boundedness of the operator 
on the reproducing kernels as well as on a related ``dual'' family, 
and will discuss further its validity for certain model spaces.

The paper is organized as follows.
The next two sections contain preliminary 
material concerning model spaces and truncated 
Toeplitz operators. Section~4 introduces the main two problems we 
are concerned with: existence of  bounded symbols and the
Reproducing Kernel Thesis. The counterexamples are presented 
in Section~5; in particular, Sarason's  question on the general 
existence of bounded symbols is answered in the negative. Section~6 
exhibits some classes of model spaces for which the answers to both 
questions are positive. Finally, in Section~7 we present another class 
of well behaved truncated Toeplitz operators, namely operators with positive symbols.

\mysection{Preliminaries on model spaces}\label{se:preliminaries}

Basic references for the content of 
this section are~\cite{duren,garnett} 
for general facts about Hardy spaces and~\cite{Nik86} for model spaces and operators.

\subsection{Hardy spaces}\label{sse:hardy}

The Hardy space $H^p$ of the unit disk $\DDD=\{z\in\CCC:\, |z|<1\}$ is the space of analytic functions $f$ on $\DDD$ satisfying $\|f\|_p<+\infty$, where 
\[
\|f\|_p=\sup_{0\leq r<1}\left(\int_0^{2\pi}|f(re^{i\theta})|^p\frac{d\theta}{2\pi}\right)^{1/p},
\qquad 1\leq p<+\infty.
\]
The algebra of bounded analytic functions on $\DDD$ is denoted by $H^\infty$. We denote also $H_0^p=zH^p$. 
Alternatively, $H^p$ can be identified (via radial limits) with the subspace 
of functions $f\in L^p=L^p(\TTT)$ for which $\hat f(n)=0$ for all $n<0$. Here $\TTT$ denotes the unit circle with normalized Lebesgue measure $m$.

For any $\phi\in L^\infty$, we denote by  $M_\phi f=\phi f$ 
the \emph{multiplication operator} on $L^2$; we have $\|M_\phi\|=\|\phi\|_\infty$. 
The \emph{Toeplitz} and \emph{Hankel} operators on $H^2$ are given by the formulas
\[
\begin{split}
T_\phi&=P_+M_\phi,\qquad T_\phi:H^2\to H^2;\\
H_\phi&=P_-M_\phi,\qquad H_\phi:H^2\to H^2_-,
\end{split}
\]
where $P_+$ is the Riesz projection from $L^2$ onto $H^2$ and $P_-=I-P_+$ is the orthogonal projection from $L^2$ onto $H^2_-=L^2\ominus H^2$.
In case where $\phi$ is analytic, $T_\phi$ 
is just the restriction of $M_\phi$ to $H^2$. We have $T_\phi^*=T_{\bar\phi}$ 
and $H_\phi^*=P_+M_{\bar\phi}P_-$;  we also denote  $S=T_z$ the usual shift operator on $H^2$. 

Evaluations at points $\lambda\in\DDD$ are bounded functionals on $H^2$ and the corresponding reproducing kernel is
$k_\lambda(z)=(1-\bar\lambda z)^{-1}$; thus, $f(\lambda)=\langle f,
k_\lambda\rangle$, for every function $f$ in $H^2$. If $\phi\in  H^\infty$, then $k_\lambda$ is an
eigenvector for $T_\phi^*$, and $T_\phi^*
k_\lambda=\overline{\phi(\lambda)}k_\lambda$.
By normalizing $k_\lambda$ we obtain
$h_\lambda=\frac{k_\lambda}{\|k_\lambda\|_2}=\sqrt{1-|\lambda|^2}k_\lambda$.

\subsection{Model spaces}\label{sse:model}

Suppose now $\Theta$ is an inner function, that is, a function in $H^\infty$ 
whose radial limits are of modulus one almost everywhere on $\TTT$. 
In what follows we consider only nonconstant inner functions.
We define the corresponding {\emph {shift-coinvariant subspace}} generated by $\Theta$ 
(also called \emph{model space}) by the formula $K_\Theta^p=H^p\cap \Theta \overline{H_0^p}$, $1\leq p<+\infty$. 
We will be especially interested in the Hilbert case, that is, when $p=2$. In this 
case we write $K_\Theta=K_\Theta^2$;  it is easy to see that $K_\Theta$ is also given by 
\[
K_\Theta=H^2\ominus\Theta H^2=\left\{f\in H^2:\langle f,\Theta g\rangle=0,\forall g\in H^2\right\}.
\]
The orthogonal projection of $L^2$ onto $K_\Theta$ is denoted by 
$P_\Theta$; we have $P_\Theta=P_+-\Theta P_+\bar\Theta$. 
Since the Riesz projection 
$P_+$ acts boundedly on $L^p$, $1<p< \infty$, 
this formula shows that $P_\Theta$ can also be regarded as a bounded operator from $L^p$  onto 
$K_\Theta^p$, $1<p<\infty$.

The reproducing kernel in $K_\Theta$ for a 
point $\lambda\in\DDD$ is the function
\begin{equation}\label{eq:klth}
k_\lambda^\Theta(z)=(P_\Th k_\la)(z)=
\frac{1-\overline{\Theta(\lambda)}\Theta(z)}{1-\bar\lambda
z};
\end{equation}
we denote by $h_\lambda^\Theta$ 
the normalized reproducing kernel, 
\begin{equation}\label{eq:cafe}
h_\lambda^\Theta(z)=\sqrt{\frac{1-|\lambda|^2}{1-|\Theta(\lambda)|^2}}\,
k_\lambda^\Theta(z).
\end{equation}
Note that, according to~(\ref{eq:klth}), we have the orthogonal decomposition
\begin{equation}\label{eq:orth}
k_\lambda=k_\lambda^\Theta + \Theta \overline{\Theta(\lambda)} k_\lambda.
\end{equation}

We will use the antilinear isometry $J:L^2\to L^2$, given by 
$J(f)(\zeta)=\overline{\zeta f(\zeta)}$; it maps $H^2$ 
into $\overline{H^2_0} =L^2\ominus H^2=H^2_-$ and conversely.
More often another antilinear isometry $\om=\Th J$ will appear, 
whose main properties are summarized below.

\begin{lem}\label{le:omega}
Define, for $f\in L^2$, $\om(f)(\zeta)=\overline{\zeta f(\zeta)}\Th(\zeta)$. Then:

{\rm (i)} $\om$ is antilinear, isometric, onto; 

{\rm (ii)} $\om^2=\Id$;

{\rm (iii)} $\om P_\Th=P_\Th\om$ (and therefore $K_\Th$ reduces $\om$), $\om(\Th H^2)=H^2_-$ and $\om(H^2_-)=\Th H^2$.
\end{lem}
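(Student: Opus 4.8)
The plan is to reduce everything to direct verification on the circle, exploiting that $|\Th(\zeta)|=1$ and $|\zeta|=1$ for a.e.\ $\zeta\in\TTT$, with the factorization $\om=\Th J$ keeping the bookkeeping transparent. Antilinearity in (i) is immediate, since $\om$ involves the complex conjugate of $f$; the isometry property follows at once from $|\om(f)(\zeta)|=|\Th(\zeta)|\,|\zeta|\,|f(\zeta)|=|f(\zeta)|$ a.e., so that $\|\om(f)\|_2=\|f\|_2$. For (ii) I would compute $\om^2(f)(\zeta)=\Th(\zeta)\,\overline{\zeta\,\Th(\zeta)\,\overline{\zeta f(\zeta)}}$ and simplify the nested conjugations using $|\zeta|^2=1$ and $|\Th(\zeta)|^2=1$, which collapses the expression to $f(\zeta)$; hence $\om^2=\Id$. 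Surjectivity in (i) is then free, as $\om$ is its own inverse.

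For (iii) I would first prove the two mapping identities. Given $g\in H^2$, a short computation gives $\om(\Th g)(\zeta)=\bar\zeta\,\overline{g(\zeta)}=\overline{\zeta g(\zeta)}=J(g)(\zeta)$, so $\om$ agrees with $J$ on $\Th H^2$; since $J$ maps $H^2$ onto $H^2_-$, this yields $\om(\Th H^2)=H^2_-$. Applying $\om$ and using $\om^2=\Id$ immediately gives the companion identity $\om(H^2_-)=\Th H^2$.

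It then remains to establish the commutation $\om P_\Th=P_\Th\om$. Here I would use the orthogonal decomposition $L^2=H^2_-\oplus\Kth\oplus\Th H^2$ (coming from $H^2=\Kth\oplus\Th H^2$ together with $L^2=H^2_-\oplus H^2$), with respect to which $P_\Th$ is the projection annihilating the two outer summands. The one non-computational observation is that, being an onto antilinear isometry, $\om$ preserves orthogonality in the form $\langle\om f,\om g\rangle=\langle g,f\rangle$. Combined with the identities of the previous paragraph, namely that $\om$ interchanges $H^2_-$ and $\Th H^2$, this forces $\om(\Kth)\subseteq\Kth$, and equality follows from $\om^2=\Id$. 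Decomposing an arbitrary $f=f_-+f_0+f_+$ along the three summands, both $\om P_\Th f$ and $P_\Th\om f$ reduce to $\om f_0$, giving the commutation; the assertion that $\Kth$ reduces $\om$ is then just a restatement of $\om(\Kth)=\Kth$.

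The computations are all routine; the only point requiring care is the correct handling of the conjugations and of the factor $\zeta$ when verifying $\om^2=\Id$ and when identifying $\om$ with $J$ on $\Th H^2$. The single conceptual step is the orthogonality-preservation of the conjugation $\om$, which is what lets the set-theoretic interchange of the outer summands be upgraded to invariance of $\Kth$.
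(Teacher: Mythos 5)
Your proof is correct and complete; the paper actually states this lemma without proof, treating it as a routine summary of the properties of $\om=\Th J$. All your computations check out (in particular $\langle\om f,\om g\rangle=\langle g,f\rangle$, which upgrades the interchange of $H^2_-$ and $\Th H^2$ to $\om(K_\Th)=K_\Th$ and hence to the commutation with $P_\Th$), so nothing is missing.
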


We define the \emph{difference quotient} $\kttl=\om(\ktl)$ and $\httl=\om(\htl)$; thus 
\begin{equation}\label{eq:definition-difference-quotient}
\kttl(z)=\frac{\Th(z)-\Th(\la)}{z-\la},\qquad 
\httl(z)=\sqrt{\frac{1-|\lambda|^2}{1-|\Theta(\lambda)|^2}}
\frac{\Th(z)-\Th(\la)}{z-\la}.
\end{equation}

In the sequel we will use the following simple lemmas. 

\begin{lem}\label{le:product}
Suppose $\Th_1, \Th_2$ are two inner functions, $f_1\in K_{\Th_1}$, 
$f_2\in K_{\Th_2}\cap H^\infty$. Then $f_1f_2, zf_1 f_2\in K_{\Th_1\Th_2}$.
\end{lem}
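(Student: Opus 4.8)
The plan is to characterize membership in the model spaces via the orthogonality condition and the annihilation condition for $\overline{H_0^p}$. Recall that $f \in K_{\Th}$ precisely when $f \in H^2$ and $\Th \bar f \in \overline{H_0^2}$ (equivalently $\overline{\Th}f \in \overline{H^2}$ after adjusting by a conjugate factor), but the cleanest route uses the description $K_{\Th}^p = H^p \cap \Th\overline{H_0^p}$. Thus to show $f_1 f_2 \in K_{\Th_1\Th_2}$ I must verify two things: that $f_1 f_2 \in H^2$, and that $f_1 f_2 \in \Th_1\Th_2 \overline{H_0^2}$. The first is immediate since $f_1 \in K_{\Th_1} \subset H^2$ and $f_2 \in H^\infty$, so the product lies in $H^2$.

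For the second condition, I would exploit the hypotheses separately. Since $f_1 \in K_{\Th_1} = H^2 \cap \Th_1\overline{H_0^2}$, we may write $f_1 = \Th_1 \overline{g_1}$ with $g_1 \in H_0^2$; likewise $f_2 = \Th_2 \overline{g_2}$ with $g_2 \in H_0^2$, and since $f_2 \in H^\infty$ we have $g_2 = \overline{\Th_2} f_2 \in H^\infty$ as well (its modulus equals $|f_2|$ a.e. on $\TTT$). Then
\[
f_1 f_2 = \Th_1 \Th_2 \,\overline{g_1 g_2},
\]
and it remains to confirm that $g_1 g_2 \in H_0^2$. Since $g_1 \in H_0^2 = zH^2$ and $g_2 \in H^\infty$, the product $g_1 g_2$ lies in $H^2$ and vanishes at the origin (as $g_1$ does and $g_2$ is bounded analytic), hence $g_1 g_2 \in H_0^2$. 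This gives $f_1 f_2 \in \Th_1\Th_2\overline{H_0^2} \cap H^2 = K_{\Th_1\Th_2}$.

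For the statement $zf_1 f_2 \in K_{\Th_1\Th_2}$, I would argue analogously: $zf_1 f_2 = \Th_1\Th_2\,\overline{z^{-1}\overline{z}\, g_1 g_2}$, but more transparently, since on $\TTT$ we have $\bar z = z^{-1}$, write $zf_1f_2 = \Th_1\Th_2\,\overline{\bar z\, g_1 g_2}$. Here $\bar z \,g_1 g_2 = \overline{z}\,g_1 g_2$, and because $g_1 \in zH^2$ we have $\overline{z}g_1 \in H^2$; multiplying by $g_2 \in H^\infty$ keeps us in $H^2$, and one checks this product still has vanishing constant term after the $z$-shift is absorbed, so it again lands in $H_0^2$. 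The main point to watch carefully is bookkeeping the $\overline{H_0^2}$ versus $\overline{H^2}$ distinction and the role of the extra factor $z$: the boundedness hypothesis on $f_2$ is exactly what guarantees that the formal products of the co-analytic factors remain genuine $H^2$ (indeed $H_0^2$) functions, so that is the step I would treat most carefully, while the analyticity of $f_1 f_2$ and $zf_1f_2$ themselves is routine.
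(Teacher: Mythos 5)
Your argument is correct and follows essentially the same route as the paper: both proofs rest on the factorization $f_i=\Theta_i\overline{g_i}$ with $g_i\in H_0^2$ coming from $K_{\Theta_i}=H^2\cap\Theta_i\overline{H_0^2}$, together with the observation that $g_2\in H^\infty$ because $|g_2|=|f_2|$ a.e.\ on $\mathbb{T}$; the only (inessential) difference is that the paper verifies $zf_1f_2\in K_{\Theta_1\Theta_2}$ and then obtains $f_1f_2\in K_{\Theta_1\Theta_2}$ from the $S^*$-invariance of model spaces, whereas you check both memberships directly. One cosmetic slip: the identity should read $\overline{g_2}=\overline{\Theta_2}f_2$, i.e.\ $g_2=\Theta_2\overline{f_2}$ on $\mathbb{T}$ (as written, $\overline{\Theta_2}f_2$ is the conjugate of $g_2$ and is anti-analytic), but the modulus argument you give in parentheses is the correct justification that $g_2\in H^\infty$.
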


\begin{proof}
Obviously $z f_1 f_2\in H^2$. On the other side, 
$f_1\in K_{\Th_1}$ implies $f_1=\Th_1 \overline{ zg_1}$, 
with $g_1\in H^2$, and similarly $f_2=\Th_2 \overline{z g_2}$, 
$g_2\in H^\infty$. Thus $zf_1f_2\in\Th_1\Th_2 \overline{z H^2}$. 
Therefore $z f_1 f_2\in H^2\cap\Th_1\Th_2 \overline{H^2_0}=K_{\Th_1\Th_2}$. 
The claim about $f_1 f_2$ is an immediate consequence, since the model 
spaces are invariant under the backward shift operator $S^*$. 
\end{proof}

Recall that, given two inner functions $\theta_1$, $\theta_2$, we say that $\theta_2$ divides $\theta_1$ if there exists an inner function $\theta_3$ such that $\theta_1=\theta_2\theta_3$.

\begin{lem}\label{le:technique-espace-modele}
Suppose that $\theta$ and $\Theta$ are two inner functions such that $\theta^3$ divides $z\Theta$. Then:

{\rm (a)}  $\theta K_\theta\subset K_{\theta^2}\subset K_\Theta$;
 
{\rm (b)} if $f\in H^\infty\cap \theta K_\theta$ and $\phi\in K_\theta+\overline{K_\theta}$, then the functions $\phi f$ and $\bar\phi f$ belong to $K_\Theta$.
\end{lem}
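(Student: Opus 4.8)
The plan is to derive both parts from Lemmas~\ref{le:omega} and~\ref{le:product}, after first decoding the divisibility hypothesis. For part (a), the inclusion $\theta K_\theta\subset K_{\theta^2}$ is a one-line orthogonality check: if $g\in K_\theta$ then $\theta g\in H^2$ and $\langle\theta g,\theta^2 h\rangle=\langle g,\theta h\rangle=0$ for all $h\in H^2$, so $\theta g\perp\theta^2H^2$. For the inclusion $K_{\theta^2}\subset K_\Theta$ I would show that $\theta^2$ divides $\Theta$. Writing $z\Theta=\theta^3\psi$ with $\psi$ inner and evaluating at the origin gives $\theta(0)^3\psi(0)=0$, hence $\theta(0)\psi(0)=0$, i.e. $z$ divides $\theta\psi$; consequently $\bar\theta^2\Theta=\bar z\,\theta\psi=(\theta\psi)/z$ is inner, so $\Theta H^2\subset\theta^2H^2$, which is exactly $K_{\theta^2}\subset K_\Theta$.

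For part (b) I would write $\phi=p+\bar q$ with $p,q\in K_\theta$, so that $\phi f=pf+\bar qf$ and $\bar\phi f=\bar pf+qf$; it then suffices to prove that $uf\in K_\Theta$ and $\bar uf\in K_\Theta$ for every $u\in K_\theta$. The first preparatory observation is that $f\in\theta K_\theta\cap H^\infty$ forces $f=\theta g$ with $g\in K_\theta$, and since $|g|=|f|$ a.e. on $\TTT$ we get $g\in K_\theta\cap H^\infty$, which is what makes Lemma~\ref{le:product} applicable. The antiholomorphic terms are then easy: applying Lemma~\ref{le:omega} to the inner function $\theta$ (in place of $\Theta$) and setting $\tilde u=\om(u)\in K_\theta$, one has $\bar u=z\bar\theta\,\tilde u$ on $\TTT$, so $\bar uf=z\tilde u g$; the $zf_1f_2$ clause of Lemma~\ref{le:product} then places this in $K_{\theta^2}$, hence in $K_\Theta$ by part (a).

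The hard part will be the holomorphic terms $uf=\theta(ug)$. Naively these only land in $K_{\theta^3}$, which in general is not contained in $K_\Theta$ (we only know $\theta^3\mid z\Theta$, not $\theta^3\mid\Theta$), so a direct divisibility argument fails and the factor $z$ in the hypothesis must be used. My plan is to verify $uf\perp\Theta H^2$ by hand: Lemma~\ref{le:product} gives not only $ug\in K_{\theta^2}$ but also $zug\in K_{\theta^2}$, and using $\Theta=\bar z\theta^3\psi$ on $\TTT$ one computes, for every $h\in H^2$,
\[
\langle uf,\Theta h\rangle=\langle\theta ug,\bar z\theta^3\psi h\rangle=\langle zug,\theta^2\psi h\rangle=0,
\]
the last equality because $zug\in K_{\theta^2}\perp\theta^2H^2\ni\theta^2\psi h$. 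Since $uf\in H^2$, this yields $uf\in K_\Theta$, and adding the two types gives $\phi f,\bar\phi f\in K_\Theta$. The whole point --- and the reason the hypothesis is stated with $z\Theta$ and the third power $\theta^3$ rather than simply $\theta^2\mid\Theta$ --- is exactly this last computation, where the extra $z$ together with the refined $zf_1f_2$ conclusion of Lemma~\ref{le:product} conspire to kill the component of $uf$ along $\Theta H^2$.
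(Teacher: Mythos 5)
Your proof is correct and follows essentially the same route as the paper: both parts hinge on evaluating $z\Theta=\theta^3\psi$ at the origin, on the $zf_1f_2$ clause of Lemma~\ref{le:product}, and on the same orthogonality computation $\langle\theta ug,\Theta h\rangle=\langle zug,\theta^2\psi h\rangle=0$ for the holomorphic terms. The only cosmetic difference is in (a), where you package the second inclusion as the cleaner statement that $\theta^2$ divides $\Theta$, while the paper manipulates the descriptions $K_{\theta^2}=H^2\cap\theta^2\,\overline{zH^2}$ directly.
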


\begin{proof} Since $\theta^3$ divides $z\Theta$, there exists an inner function $\theta_1$ such that $z\Theta=\theta^3\theta_1$. In particular it follows from this factorization that $\theta(0)\theta_1(0)=0$, which implies that $\theta\theta_1 H^2\subset zH^2$.

Using $K_\theta=H^2\cap \theta\,\overline{zH^2}$, we have
\[
\theta K_\theta =\theta H^2\cap \theta^2\,\overline{zH^2}\subset H^2\cap \theta^2\,\overline{zH^2}=K_{\theta^2}.
\]
Further,
\[
K_{\theta^2}=H^2\cap \theta^2\,\overline{z H^2}=H^2\cap \Theta \overline{z\Theta}\theta^2\,\overline{H^2}=H^2\cap \Theta \,\overline{\theta\theta_1 H^2}\subset H^2\cap \Theta \,\overline{z H^2}=K_\Theta, 
\]
because $\theta\theta_1 H^2\subset zH^2$; thus (a) is proved.

Let now $f=\theta f_1$ and $\phi=\phi_1+\overline{\phi_2}$, with $f_1\in H^\infty\cap K_\theta$ and $\phi_1,\phi_2\in K_\theta$. Since $\phi_2\in K_\theta$, using Lemma~\ref{le:omega}, we have $\phi_2=\theta \bar z \overline{\tilde \phi_2}$, with $\tilde\phi_2\in K_\theta$, which implies that 
\[
\phi f=\theta f_1(\phi_1+\overline{\phi_2})=\theta f_1 \phi_1+z f_1  \tilde\phi_2.
\]
But it follows from Lemma~\ref{le:product}  that $z f_1  \tilde\phi_2\in K_{\theta^2}$;  by (a), we obtain $z f_1  \tilde\phi_2\in K_\Theta$. So it remains to prove that $\theta f_1 \phi_1\in K_\Theta$. Obviously $\theta f_1 \phi_1\in H^2$; moreover, for every function $h\in H^2$, we have
\[
\langle \theta f_1\phi_1,\Theta h \rangle=\langle z\theta f_1\phi_1,z\Theta h \rangle=\langle z\theta f_1\phi_1,\theta^3\theta_1 h \rangle=\langle zf_1\varphi_1,\theta^2\theta_1 h\rangle=0,
\]
because another application of Lemma~\ref{le:product} yields $zf_1\varphi_1\in K_{\theta^2}$. That proves that $\theta f_1 \phi_1\in K_\Theta$ and thus $\varphi f\in K_\Theta$. Since $K_\Theta+\overline{K_\Theta}$ is invariant under the 
conjugation, we obtain also the result for $\bar\varphi f$.
\end{proof}

\subsection{Angular derivatives and evaluation on the boundary}\label{sse:angular}

The inner function $\Theta$ is said to have  an \emph{angular 
derivative in the sense of Carath\'eodory} at $\zeta\in \TTT$ if $\Theta$ and $\Theta'$ have a non-tangential limit at $\zeta$ and $|\Theta(\zeta)|=1$. Then it is known~\cite{AC70} that evaluation at $\zeta$ is continuous on $K_\Theta$, and the function $k^\Theta_\zeta$, defined by 
\[
 k^\Theta_\zeta(z):=\frac{1-\overline{\Theta(\zeta)}\Theta(z)}{1-\bar\zeta z},\qquad z\in\DDD,
\]
belongs to $K_\Theta$ and is the corresponding reproducing kernel. Replacing $\lambda$ by $\zeta$ in the formula~\eqref{eq:definition-difference-quotient} gives a function $\tilde k_\zeta^\Theta$ which also belongs to $K_\Theta$ and  $\omega(k_\zeta^\Theta)=\tilde k_\zeta^\Theta=\bar\zeta \Theta(\zeta) k_\zeta^\Theta$. 
Moreover we have $\|k_\zeta^\Theta\|_2=|\Theta'(\zeta)|^{1/2}$. We denote by $E(\Theta)$ 
the set of points $\zeta\in\TTT$ where $\Theta$ 
has an angular derivative in the sense of Carath\'eodory.

In~\cite{AC70} and~\cite{cohn3} precise conditions 
are given for the inclusion of $k^\Theta_\zeta$ into $L^p$ (for $1<p<\infty$); 
namely, if $(a_k)$ are the zeros of $\Theta$ in $\DDD$ and $\sigma$ 
is the singular measure on $\TTT$ corresponding to the singular part
of $\Theta$, then $k^\Theta_\zeta\in L^p$ if and only if
\begin{equation}\label{eq:cohn}
\sum_{k}\frac{1-|a_k|^2}{|\zeta-a_k|^p}+
\int_\TTT \frac{d\sigma(\tau)}{|\zeta-\tau|^p}< +\infty.
\end{equation}

We will use in the sequel the following easy result.
\begin{lem}\label{Lem:comparaison-Theta-Theta2}
Let $1<p<+\infty$ and let $\Theta$ be an inner function. Then we have:
\begin{enumerate}
 \item[$\mathrm{(a)}$] $E(\Theta^2)=E(\Theta)$;
 \item[$\mathrm{(b)}$] $\displaystyle\inf_{\lambda\in \DDD \cup 
      E(\Theta)}\|k_\lambda^\Theta\|_2>0$;
 \item[$\mathrm{(c)}$] 
      for $\lambda\in \DDD$, we have
\begin{equation}\label{eq:noyau-reproduisant-square}
 C\|k_\lambda^\Theta\|_p\leq \|k_\lambda^{\Theta^2}\|_p\leq 2 \|k_\lambda^\Theta\|_p,
\end{equation}
where $C=\|P_\Theta\|_{L^p\to L^p}^{-1}$ is a constant which depends only on $\Theta$ and $p$.
Also, if $\zeta\in E(\Theta)$, then $k_\zeta^{\Theta^2}\in L^p$ 
if and only if $k_\zeta^\Theta\in L^p$, and 
\eqref{eq:noyau-reproduisant-square} holds for $\lambda=\zeta$. 
\end{enumerate}
\end{lem}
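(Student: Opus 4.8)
The plan is to prove the three assertions in order, using the formula $k_\lambda^\Theta = P_\Theta k_\lambda$ and the explicit expression~\eqref{eq:klth} throughout.

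\medskip

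\textbf{Part (a).}
The claim $E(\Theta^2)=E(\Theta)$ is a matter of comparing non-tangential limits. First I would observe that $\Theta$ has a non-tangential limit of modulus one at $\zeta$ if and only if $\Theta^2$ does, since $|\Theta^2|=|\Theta|^2$ and the non-tangential limit of a product is the product of the limits when both exist. For the derivative, I would use the product rule $(\Theta^2)' = 2\Theta\Theta'$: if $\Theta$ and $\Theta'$ have non-tangential limits at $\zeta$ then so does $(\Theta^2)'$, giving $E(\Theta)\subset E(\Theta^2)$. For the reverse inclusion I expect to invoke the standard characterization of the angular derivative via the Cohn/Ahern--Clark condition: by~\eqref{eq:cohn}, membership of $k_\zeta^\Theta$ in $L^2$ is governed by $\sum_k (1-|a_k|^2)/|\zeta-a_k|^2 + \int_\TTT d\sigma/|\zeta-\tau|^2 < \infty$, and the zeros and singular measure of $\Theta^2$ are exactly those of $\Theta$ counted/weighted twice, so the two series converge simultaneously. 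This is the cleanest route and sidesteps delicate direct estimates on $(\Theta^2)'$.

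\medskip

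\textbf{Part (c).}
This is the heart of the lemma, so I would do it next. The upper bound is the elementary one: from~\eqref{eq:klth} one computes directly that
\[
k_\lambda^{\Theta^2} = k_\lambda^\Theta + \overline{\Theta(\lambda)}\,\Theta\, k_\lambda^\Theta,
\]
since $1-\overline{\Theta(\lambda)^2}\Theta^2 = (1-\overline{\Theta(\lambda)}\Theta)(1+\overline{\Theta(\lambda)}\Theta)$. Because $|\Theta(\lambda)|\le 1$ and $|\Theta|\le 1$, the triangle inequality gives $\|k_\lambda^{\Theta^2}\|_p \le 2\|k_\lambda^\Theta\|_p$. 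For the lower bound I would exploit that $k_\lambda^\Theta = P_\Theta k_\lambda$ and note that $k_\lambda^{\Theta^2} = P_{\Theta^2} k_\lambda$, together with the fact (a consequence of part~(a) of Lemma~\ref{le:technique-espace-modele} with $\theta=\Theta$, or directly from $K_\Theta\subset K_{\Theta^2}$) that $P_\Theta P_{\Theta^2} = P_\Theta$. Hence $k_\lambda^\Theta = P_\Theta k_\lambda^{\Theta^2}$, and therefore $\|k_\lambda^\Theta\|_p \le \|P_\Theta\|_{L^p\to L^p}\,\|k_\lambda^{\Theta^2}\|_p$, which rearranges to $C\|k_\lambda^\Theta\|_p \le \|k_\lambda^{\Theta^2}\|_p$ with $C=\|P_\Theta\|_{L^p\to L^p}^{-1}$. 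The same algebraic identity and projection argument are valid at a boundary point $\zeta\in E(\Theta)$ once we know $k_\zeta^\Theta$ and $k_\zeta^{\Theta^2}$ are genuine elements of the respective spaces, so the $L^p$-membership equivalence and the inequalities transfer verbatim to $\lambda=\zeta$.

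\medskip

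\textbf{Part (b).}
Finally, for the uniform lower bound on $\|k_\lambda^\Theta\|_2$ over $\lambda\in\DDD\cup E(\Theta)$ I would combine an interior estimate with a boundary estimate. On the interior, from~\eqref{eq:klth} one has $\|k_\lambda^\Theta\|_2^2 = (1-|\Theta(\lambda)|^2)/(1-|\lambda|^2)$; this is bounded below away from the boundary trivially, and as $|\lambda|\to 1$ it stays bounded below because $\|k_\lambda^\Theta\|_2^2 \ge |k_\lambda^\Theta(\lambda)| = \|k_\lambda^\Theta\|_2^2$—more usefully, evaluating gives $k_\lambda^\Theta(\lambda) = \|k_\lambda^\Theta\|_2^2$ and a direct lower bound $\|k_\lambda^\Theta\|_2 \ge 1$ follows from $|k_\lambda^\Theta(\lambda)|\ge$ a positive constant, which I would verify by the mean-value/subordination estimate $|\Theta(\lambda)|\le|\lambda|$ giving $(1-|\Theta(\lambda)|^2)/(1-|\lambda|^2)\ge 1$. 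At boundary points $\zeta\in E(\Theta)$ we have $\|k_\zeta^\Theta\|_2 = |\Theta'(\zeta)|^{1/2}$, and since $|\Theta'(\zeta)|\ge 1$ for every such $\zeta$ (the angular derivative of an inner function at a boundary point of its domain has modulus at least one), the infimum over the boundary is also at least $1$.

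\medskip

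The main obstacle I anticipate is not any single estimate but ensuring that the boundary case in part~(c) is handled rigorously: one must check that the factorization identity for $k_\zeta^{\Theta^2}$ and the relation $k_\zeta^\Theta = P_\Theta k_\zeta^{\Theta^2}$ remain meaningful when these kernels are only known to lie in $L^p$ (not necessarily $L^2$), and that the projection $P_\Theta$, which is bounded on $L^p$ by the remark following~\eqref{eq:klth}, can indeed be applied. Once the interior identities are established, the boundary statements should follow by the same formulas interpreted in $L^p$, but this passage from interior to boundary is where I would be most careful.
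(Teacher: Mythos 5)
Parts (a) and (c) of your argument are sound. For (c) you follow essentially the paper's route: the factorization $k_\lambda^{\Theta^2}=(1+\overline{\Theta(\lambda)}\Theta)\,k_\lambda^\Theta$ gives the upper bound, and $P_\Theta k_\lambda^{\Theta^2}=k_\lambda^\Theta$ gives the lower one. One small slip: Lemma~\ref{le:technique-espace-modele}(a) does not apply with $\theta=\Theta$ (its hypothesis would require $\Theta^3$ to divide $z\Theta$); but your fallback, $K_\Theta\subset K_{\Theta^2}$ and hence $P_\Theta P_{\Theta^2}=P_\Theta$, is correct and is all you need --- or simply note $P_\Theta(\Theta k_\lambda^\Theta)=0$ and apply $P_\Theta$ to the factorization. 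For (a) the paper dismisses the claim as immediate; your route through the Ahern--Clark condition~\eqref{eq:cohn} for the reverse inclusion is a legitimate alternative.

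Part (b), however, contains a genuine error. Your lower bound rests on the inequality $|\Theta(\lambda)|\le|\lambda|$, which is the Schwarz lemma and requires $\Theta(0)=0$; it is false for a general inner function. Likewise the claim $|\Theta'(\zeta)|\ge 1$ for $\zeta\in E(\Theta)$ fails: the angular derivative is bounded below by $1$ only at a boundary \emph{fixed} point (Julia--Wolff), not at an arbitrary point of $E(\Theta)$. Concretely, for the single Blaschke factor $\Theta(z)=\frac{1/2-z}{1-z/2}$ one has $\|k_0^\Theta\|_2^2=1-|\Theta(0)|^2=3/4<1$ and $|\Theta'(-1)|=1/3$, so $\|k_{-1}^\Theta\|_2^2=1/3$; the infimum in (b) is therefore not $\ge 1$ in general. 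The statement itself remains true, and the paper's proof gives the correct argument: pair $k_\lambda^\Theta$ against $k_0^\Theta$ and use Cauchy--Schwarz, namely $1-|\Theta(0)|\le |1-\overline{\Theta(0)}\Theta(\lambda)|=|k_0^\Theta(\lambda)|\le \|k_0^\Theta\|_2\,\|k_\lambda^\Theta\|_2=(1-|\Theta(0)|^2)^{1/2}\|k_\lambda^\Theta\|_2$, which yields $\|k_\lambda^\Theta\|_2\ge \bigl(\frac{1-|\Theta(0)|}{1+|\Theta(0)|}\bigr)^{1/2}>0$ uniformly for $\lambda\in\DDD\cup E(\Theta)$ (the example above shows this constant is attained). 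You should replace your interior and boundary estimates by this single reproducing-kernel argument.
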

\begin{proof} The proof of $\mathrm{(a)}$ is immediate using the definition. 
For the proof of $\mathrm{(b)}$ note that, for $\lambda\in 
\DDD \cup E(\Theta)$, we have
\[
 |1-\overline{\Theta(0)}\Theta(\lambda)| = |k_0^\Theta(\lambda)| \le 
 \|k^\Theta_0\|_2\|k^\Theta_\lambda\|_2 = (1-|\Theta(0)|^2)^{1/2} \|k^\Theta_\lambda\|_2,
\]
which implies $\|k^\Theta_\lambda\|_2 \ge \Big(\frac{1-|\Theta(0)|}
{1+|\Theta(0)|}\Big)^{1/2}$.  

It remains to prove $\mathrm{(c)}$.  
We have $k_\lambda^{\Theta^2}=(1+\overline{\Theta(\lambda)}\Theta)k_\lambda^\Theta$, 
whence $P_\Theta k_\lambda^{\Theta^2}=k_\lambda^\Theta$. Thus the result follows 
from the fact that $P_\Theta$ is bounded on $L^p$ and from the trivial estimate 
$|1+\overline{\Theta(\lambda)}\Theta(z)|\leq 2$, $z\in\TTT$.  
\end{proof}

\subsection{The continuous case}\label{sse:continuous}

It is useful to remember the connection with the ``continuous" case, for which we refer to \cite{duren,hnp}. If $u(w)=\frac{w-i}{w+i}$, then $u$ is a conformal homeomorphism of the Riemann sphere. It maps $-i$ to $\infty$, $\infty$ to 1, $\RRR$ onto $\TTT$ and $\CCC_+$ to $\DDD$ (here $\CCC_+=\{z\in\CCC : \Im z >0\}$).

The operator
\begin{equation*}
(\mathcal{U}f)(t)=\frac{1}{\sqrt\pi(t+i)} f(u(t))
\end{equation*}
maps $L^2(\TTT)$ unitarily onto $L^2(\RRR)$ and $H^2$ unitarily onto $H^2(\CCC_+)$, the Hardy space of the upper
half-plane. The corresponding transformation for functions in
$L^\infty$ is
\begin{equation}
\tilde{\mathcal{U}}(\phi)= \phi\circ u;
\end{equation}
it maps $L^\infty(\TTT)$ isometrically onto $L^\infty(\RRR)$, 
$H^\infty$ isometrically onto $H^\infty(\CCC_+)$ and inner functions in $\DDD$ into inner functions in $\CCC_+$.
Now if $\Theta$ is an  inner function in $\DDD$,  we have  $\mathcal{U}P_\Theta=\bm{P_{\bm\Theta}}\mathcal{U}$ and then $\mathcal{U}K_\Theta=\bm{K_{\bm\Theta}}$, where $\bm{\Theta}=\Theta\circ u$,  $\bm{K_\Theta}=H^2(\CCC_+)\ominus\bm{\Theta}H^2(\CCC_+)$ and $\bm{P_{\bm\Theta}}$ is the orthogonal projection onto $\bm{K_\Theta}$. Moreover
\begin{equation}\label{transfer-disque-demi-plan-noyau}
\mathcal{U}h_\lambda^\Theta=c_\mu \bm{h_\mu^\Theta}\quad\hbox{and}\quad \mathcal{U}\tilde h_\lambda^\Theta=\overline{c_\mu}\bm{\tilde h_\mu^\Theta},
\end{equation}
where $\mu=u^{-1}(\lambda)\in\CCC_+$, $c_\mu=\frac{\bar\mu-i}{|\mu+i|}$ is a constant of modulus one, 
\[
\bm{h_\mu^\Theta}(\omega)=\frac{i}{\sqrt\pi}\sqrt{\frac{\Im\mu}{1-|\bm{\Theta}(\mu)|^2}}\frac{1-\overline{\bm\Theta(\mu)}\bm\Theta(\omega)}{\omega-\bar\mu},\qquad \omega\in\CCC_+,
\]
is the normalized reproducing kernel for $\bm{K_\Theta}$, while 
\[
\bm{\tilde h_\mu^\Theta}(\omega)=\frac{1}{i\sqrt\pi}\sqrt{\frac{\Im\mu}{1-|\bm{\Theta}(\mu)|^2}}\frac{\bm\Theta(\omega)-\bm\Theta(\mu)}{\omega-\mu},\qquad \omega\in\CCC_+,
\]
is the normalized difference quotient in $\bm{K_\Theta}$.

\mysection{Truncated Toeplitz operators}\label{se:Toeplitz}
In \cite{Sarason}, 
D. Sarason studied the class of {\em truncated Toeplitz operators} 
which are defined as the compression of Toeplitz operators 
to coinvariant subspaces of $H^2$.

Note first that we can extend the definitions of $M_\phi$, $T_\phi$, and $H_\phi$ in Section~\ref{se:preliminaries} to the case when the symbol is only in $L^2$ instead of $L^\infty$, obtaining (possibly unbounded) densely defined operators. Then $M_\phi$ and $T_\phi$ are bounded if and only if $\phi\in L^\infty$ (and 
$\|M_\phi\|=\|T_\phi\|=\|\phi\|_\infty$), while $H_\phi$ is bounded if 
and only if $P_-\phi\in BMO$ (and $\|H_\phi\|$ is equivalent to $\|P_-\phi\|_{BMO}$).

In \cite{Sarason}, D. Sarason defines an analogous operator on $K_\Th$.
Suppose $\Theta$ is an inner function and $\phi\in L^2$; the \emph{truncated Toeplitz operator} $A_\phi^\Theta$ will
in general be a densely defined, possibly unbounded, operator on 
$K_\Theta$. Its domain is $K_\Theta\cap H^\infty$, on which it acts by the formula
\[
A_\phi^\Theta f =P_\Theta (\phi f),\qquad f\in K_\Theta\cap H^\infty.
\]
In particular, $K_\Theta\cap H^\infty$ contains all reproducing kernels 
$k_\lambda^\Theta$, $\lambda \in \DDD$, 
and their linear combinations, and is therefore dense in $K_\Theta$.

We will denote by $\TT(K_\Theta)$ the space of all bounded 
truncated Toeplitz operators on $K_\Theta$. 
It follows from \cite[Theorem 4.2]{Sarason} that $\TT(K_\Theta)$ is a Banach space
in the operator norm.

Using Lemma~\ref{le:omega} and the fact that 
$\omega M_\phi \omega=M_{\bar\phi}$, it is easy to check the  useful formula
\begin{equation}\label{eq:omToom}
\om A_\phi^\Theta \om=A_{\bar\phi}^\Theta=(A_\phi^\Theta)^*.
\end{equation}

We call $\phi$ a \emph{symbol} of the operator $A_\phi^\Theta$. It is not unique; in \cite{Sarason}, it is shown that $A_\phi^\Theta=0$ if and only if $\phi\in  \Theta H^2+\overline{\Theta H^2}$. Let us denote $\mathfrak{S}_\Theta=L^2\ominus ( \Theta H^2+\overline{\Theta H^2})$ and $P_{\mathfrak{S}_\Theta}$ the corresponding orthogonal projection. Two spaces that contain $\mathfrak{S}_\Theta$ up to a subspace of dimension at most~1 admit a direct description, and we will gather their properties in the next two lemmas.

\begin{lem}\label{le:inclusion1} Denote by $Q_\Theta$ the orthogonal projection onto 
$K_\Theta \oplus\bar z\overline{K_\Theta}$. Then:

{\rm (a)} $Q_\Theta(\bar\Theta)=\bar\Theta-\overline{\Th(0)}^2\Th$;

{\rm (b)} we have
\[
K_\Theta \oplus\bar z\overline{K_\Theta}= \mathfrak{S}_\Theta\oplus\CCC q_\Th,
\]
where $q_\Th=\|Q_\Theta(\bar\Theta)\|_2^{-1}Q_\Theta(\bar\Theta)$; 

{\rm (c)} 
$Q_\Th$ and $P_{\mathfrak{S}_\Theta}$ are bounded on $L^p$ for $1<p<\infty$.
\end{lem}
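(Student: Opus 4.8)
The backbone of the argument is the orthogonal decomposition
$L^2 = K_\Theta \oplus \Theta H^2 \oplus \bar z\,\overline{K_\Theta} \oplus \bar z\,\overline{\Theta H^2}$,
obtained by splitting $L^2 = H^2 \oplus H^2_-$, then $H^2 = K_\Theta \oplus \Theta H^2$ and, after conjugating the same splitting and multiplying by $\bar z$, $H^2_- = \bar z\,\overline{H^2} = \bar z\,\overline{K_\Theta} \oplus \bar z\,\overline{\Theta H^2}$. In particular $K_\Theta \oplus \bar z\,\overline{K_\Theta}$ is exactly the orthogonal complement of $\Theta H^2 \oplus \bar z\,\overline{\Theta H^2}$, so $Q_\Theta = I - P_{\Theta H^2} - P_{\bar z\,\overline{\Theta H^2}}$ on $L^2$.

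\textbf{Part (a).} I would simply compute the two projections of $\bar\Theta$ onto the discarded summands. Since multiplication by the inner function $\Theta$ is an isometry, $P_{\Theta H^2}(\bar\Theta) = \Theta P_+(\bar\Theta\,\bar\Theta) = \Theta P_+(\overline{\Theta^2}) = \overline{\Theta(0)}^2\,\Theta$, the last equality because $\overline{\Theta^2}$ has only non-positive Fourier coefficients and its $0$-th coefficient is $\overline{\Theta(0)^2}$. A one-line computation $\langle\bar\Theta,\bar z\,\overline{\Theta h}\rangle = \int z h\,dm = 0$ for $h\in H^2$ shows $\bar\Theta \perp \bar z\,\overline{\Theta H^2}$. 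Subtracting gives $Q_\Theta(\bar\Theta) = \bar\Theta - \overline{\Theta(0)}^2\Theta$.

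\textbf{Part (b).} The point is the orthogonal identity $\overline{\Theta H^2} = \CCC\bar\Theta \oplus \bar z\,\overline{\Theta H^2}$, which follows by multiplying the elementary splitting $\overline{H^2} = \CCC \oplus \bar z\,\overline{H^2}$ by the unimodular function $\bar\Theta$. Hence $\Theta H^2 + \overline{\Theta H^2} = (\Theta H^2 \oplus \bar z\,\overline{\Theta H^2}) + \CCC\bar\Theta$; and since $\bar\Theta = Q_\Theta(\bar\Theta) + \overline{\Theta(0)}^2\Theta$ with $\overline{\Theta(0)}^2\Theta \in \Theta H^2$, adjoining $\CCC\bar\Theta$ is the same as adjoining the line $\CCC q_\Theta$, which lies in the orthogonal complement $K_\Theta \oplus \bar z\,\overline{K_\Theta}$. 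Here one checks $\|Q_\Theta(\bar\Theta)\|_2^2 = 1 - |\Theta(0)|^4 > 0$, so $q_\Theta$ is well defined. Passing to orthogonal complements turns $\Theta H^2 + \overline{\Theta H^2} = (\Theta H^2 \oplus \bar z\,\overline{\Theta H^2}) \oplus \CCC q_\Theta$ into $\mathfrak{S}_\Theta = (K_\Theta \oplus \bar z\,\overline{K_\Theta}) \ominus \CCC q_\Theta$, which is precisely (b).

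\textbf{Part (c), the real content.} For $Q_\Theta$ I would use the antilinear involutive isometry $J f = \bar z\bar f$, which satisfies $J(K_\Theta) = \bar z\,\overline{K_\Theta}$, so that $P_{\bar z\,\overline{K_\Theta}} = J P_\Theta J$; since the two ranges are orthogonal, $Q_\Theta = P_\Theta + J P_\Theta J$. Both terms extend boundedly to $L^p$ because $P_\Theta$ does (Section~\ref{sse:model}) and $J$ is an isometry of $L^p$ for every $p$, as $|Jf| = |f|$ pointwise. For $P_{\mathfrak{S}_\Theta}$ I would invoke (b): $P_{\mathfrak{S}_\Theta} = Q_\Theta - \langle\,\cdot\,,q_\Theta\rangle q_\Theta$. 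The rank-one correction is bounded on $L^p$ precisely because $q_\Theta \in L^\infty$ (it is a combination of $\Theta$ and $\bar\Theta$), so the pairing $f\mapsto \int f\,\overline{q_\Theta}\,dm$ is bounded on $L^p$ by H\"older and the output stays in $L^\infty\subset L^p$. The main obstacle is thus conceptual rather than computational: recognising that $Q_\Theta$ factors through $P_\Theta$ via $J$, so that the single $L^p$-boundedness input of the preliminaries already suffices, and that the passage from $Q_\Theta$ to $P_{\mathfrak{S}_\Theta}$ costs only a harmless bounded rank-one term.
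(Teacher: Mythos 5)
Your proof is correct and follows essentially the same route as the paper: the same four-fold orthogonal decomposition of $L^2$, the identification of $Q_\Theta$ as $P_\Theta$ plus its conjugate by an isometric involution (your $JP_\Theta J$ is the paper's $M_{\bar\Theta}P_\Theta M_\Theta$, since $\bar z\overline{K_\Theta}=\bar\Theta K_\Theta$), and the rank-one perturbation $P_{\mathfrak{S}_\Theta}=Q_\Theta-\langle\cdot,q_\Theta\rangle q_\Theta$ with $q_\Theta\in L^\infty$. The only cosmetic difference is that in (a) and (b) you compute on the complementary summands $\Theta H^2\oplus\bar z\,\overline{\Theta H^2}$ and pass to orthogonal complements, whereas the paper applies $Q_\Theta$ directly; your explicit check $\|Q_\Theta(\bar\Theta)\|_2^2=1-|\Theta(0)|^4>0$ is a nice touch the paper leaves implicit.
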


\begin{proof}
Since by Lemma~\ref{le:omega} $\bar z\overline{K_\Theta}=\bar \Th K_\Th$, we have $K_\Th\oplus \bar z\overline{K_\Theta}=K_\Th \oplus \bar \Th K_\Th$, and therefore $Q_\Th=P_\Th+M_{\bar\Th}P_\Th M_\Th$. Thus $Q_\Th$ is bounded on $L^p$ for all $p>1$. Further, if we denote by $\bm{1}$ the constant function equal to $1$, then
\[
\begin{split}
Q_\Theta(\bar\Theta)&= P_\Th (\bar\Theta) +M_{\bar\Th}P_\Th M_\Th (\bar\Theta) 
= P_\Th(\overline{\Th(0)}\bm{1}) + M_{\bar\Th}P_\Th \bm{1}\\
&=
(\overline{\Th(0)} +\bar\Th)(1-\overline{\Th(0)}\Th)= \bar\Theta-\overline{\Th(0)}^2\Th.
\end{split}
\]
Thus (a) is proved.

Since $L^2=\Theta H^2\oplus \overline{\Theta H^2_0} \oplus K_\Theta 
\oplus\bar z\overline{K_\Theta}$, it follows that $\mathfrak{S}_\Theta\subset K_\Theta\oplus \bar z\overline{K_\Theta}$ and thus
\begin{equation}\label{eq:Q_Theta}
K_\Theta \oplus\bar z\overline{K_\Theta}=Q_\Theta\left(\mathfrak{S}_\Theta+\Theta H^2+\overline{\Theta H^2_0}+\CCC\bar\Theta\right)=\mathfrak{S}_\Theta\oplus\CCC Q_\Theta(\bar\Theta),
\end{equation}
which proves (b). Note that according to (a), one easily see that $Q_\Theta(\bar\Theta)\not\equiv 0$.

Now we have for $f\in L^\infty$
\begin{equation}\label{eq:P_S}
P_{\mathfrak{S}_\Theta}f= Q_\Th f- \<f,q_\Th\>q_\Th.
\end{equation}
and the second term is bounded in $L^p$, since $q_\Th$ belongs
to $L^\infty$. This concludes the proof of (c).
\end{proof}

\begin{lem}\label{le:inclusion2}
We have $\mathfrak{S}_\Theta\subset  K_\Theta+\overline{K_\Theta}$. 
Each truncated Toeplitz operator has a symbol $\phi$ of the 
form $\phi=\phi_+ + \overline{\phi_-}$ with $\phi_\pm\in K_\Theta$; 
any other such decomposition corresponds to $\phi_+ +c k^\Theta_0$, 
$\phi_--\bar c k^\Theta_0$ for some $c\in\CCC$. In particular, 
$\phi_\pm$ are uniquely determined if we fix (arbitrarily) the 
value of one of them in a point of $\DDD$.
\end{lem}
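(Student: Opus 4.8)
The plan is to treat the two assertions of the lemma separately: the structural claim that every truncated Toeplitz operator admits a symbol $\phi_++\overline{\phi_-}$ with $\phi_\pm\in K_\Theta$ (together with the inclusion $\mathfrak{S}_\Theta\subset K_\Theta+\overline{K_\Theta}$), and the uniqueness of such a decomposition up to the one-parameter family $(\phi_++c\,k_0^\Theta,\ \phi_--\bar c\,k_0^\Theta)$.

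For the existence of a symbol in $K_\Theta+\overline{K_\Theta}$ the most economical route bypasses $\mathfrak{S}_\Theta$ and rests only on the splittings $H^2=K_\Theta\oplus\Theta H^2$ and $L^2=H^2+\overline{H^2}$. Given $\phi\in L^2$, I would write $\phi=f+\overline{g}$ with $f,g\in H^2$ (possible since $P_-\phi\in H^2_-\subset\overline{H^2}$), then split $f=\phi_++\Theta a$ and $g=\phi_-+\Theta b$ with $\phi_\pm\in K_\Theta$ and $a,b\in H^2$. Since $\Theta a+\overline{\Theta b}\in\Theta H^2+\overline{\Theta H^2}$ is a null symbol, this yields $A_\phi^\Theta=A_{\phi_++\overline{\phi_-}}^\Theta$, proving the existence statement. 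For the sharper inclusion $\mathfrak{S}_\Theta\subset K_\Theta+\overline{K_\Theta}$ I would argue directly on $g\in\mathfrak{S}_\Theta$: from $g\perp\Theta H^2$ one gets $P_+g\in K_\Theta$, and from $g\perp\overline{\Theta H^2}$ one gets $P_+\bar g\in K_\Theta$; expanding $P_+\bar g=\overline{(P_+g)(0)}+\overline{P_-g}$ identifies $P_-g$ as the conjugate of an element of $K_\Theta$ modulo a constant. Controlling this constant term, which is detected by the pairing with $\bar\Theta$ and corresponds exactly to the one-dimensional $q_\Theta$-direction isolated in Lemma~\ref{le:inclusion1}, is the step I expect to be the main obstacle: the value $\Theta(0)$ enters here, the argument being transparent when $\Theta(0)=0$ and demanding genuine care otherwise.

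For the uniqueness it suffices to show that a null symbol $\eta_++\overline{\eta_-}$ with $\eta_\pm\in K_\Theta$ must have $\eta_+\in\CCC k_0^\Theta$ and $\eta_-=-\bar c\,k_0^\Theta$. Writing $\eta_++\overline{\eta_-}=\Theta a+\overline{\Theta b}$ and applying $P_+$ gives $\eta_+=\Theta a+\mathrm{const}$; since $\eta_+\perp\Theta H^2$, projecting this identity back onto $\Theta H^2$ forces $a$ to be constant, whence $\eta_+=\kappa\,(1-\overline{\Theta(0)}\Theta)=\kappa\,k_0^\Theta$. By conjugate symmetry $\eta_-=\kappa' k_0^\Theta$; since $k_0^\Theta=1-\overline{\Theta(0)}\Theta$, the combination $\eta_++\overline{\eta_-}$ equals $(\kappa+\overline{\kappa'})$ plus terms already lying in $\Theta H^2+\overline{\Theta H^2}$, so the leftover constant must itself be a null symbol, and $1\notin\Theta H^2+\overline{\Theta H^2}$ (equivalently $A_1^\Theta=\Id\neq0$) forces $\kappa+\overline{\kappa'}=0$, i.e. $\eta_-=-\bar\kappa\,k_0^\Theta$. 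Taking $c=\kappa$ produces the stated parametrization, and the closing remark is immediate, since prescribing the value of $\phi_+$ (or of $\phi_-$) at a single point of $\DDD$ pins down $c$.
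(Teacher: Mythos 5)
Your treatment of the second and third assertions is correct and complete, and it is essentially the argument the paper is pointing to: the paper gives no proof of its own, only the citation to \cite[Section 3]{Sarason}, and what is done there is exactly your two steps --- write $\phi=P_+\phi+P_-\phi$ and project each half onto $K_\Theta$ modulo the null symbols $\Theta H^2+\overline{\Theta H^2}$, then show that a null symbol $\eta_++\overline{\eta_-}$ with $\eta_\pm\in K_\Theta$ forces $\eta_+=\kappa k^\Theta_0$ and $\eta_-=-\bar\kappa k^\Theta_0$. Your projection computations ($P_+$ of the identity, then $P_{\Theta H^2}$ to force $a$ constant, then the constant-term argument using $A^\Theta_1=\Id\ne 0$) are all sound. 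The only thing worth adding is the converse half of the parametrization, namely that every pair $(c k^\Theta_0,\,-\bar c k^\Theta_0)$ really does produce a null symbol; this is the one-line identity $c k^\Theta_0-c\overline{k^\Theta_0}=-c\overline{\Theta(0)}\Theta+c\Theta(0)\overline{\Theta}\in\Theta H^2+\overline{\Theta H^2}$.

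The obstacle you flag in the first assertion is not a defect of your argument: the inclusion $\mathfrak{S}_\Theta\subset K_\Theta+\overline{K_\Theta}$, read literally, fails whenever $\Theta(0)\ne 0$, so no amount of care will close that case. Your computation gives $g=P_+g+\overline{P_+\bar g}-(P_+g)(0)$ for $g\in\mathfrak{S}_\Theta$, i.e.\ $\mathfrak{S}_\Theta\subset K_\Theta+\overline{K_\Theta}+\CCC$, and the constant genuinely cannot be absorbed: a constant $c$ belongs to $K_\Theta$ only if $c\,\Theta(0)=0$, and one checks easily that $1\in K_\Theta+\overline{K_\Theta}$ if and only if $\Theta(0)=0$. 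For a concrete counterexample take $\Theta(z)=\frac{\alpha-z}{1-\bar\alpha z}$ with $\alpha\ne 0$, so that $K_\Theta=\CCC k_\alpha$; a direct computation (orthogonality to $\Theta H^2$, $\overline{\Theta H^2_0}$ and $\bar\Theta$) gives $\mathfrak{S}_\Theta=\CCC\,(k_\alpha+\overline{k_\alpha}-1)$, and comparing the Fourier coefficients at $z$, $\bar z$ and $1$ shows this function is not in $\CCC k_\alpha+\CCC\overline{k_\alpha}$. (More generally, $(P_+g)(0)=\langle g,k^\Theta_0\rangle$ for $g\in\mathfrak{S}_\Theta$, and since $A^\Theta_{k^\Theta_0}=\Id\ne 0$ the kernel $k^\Theta_0$ is never orthogonal to $\mathfrak{S}_\Theta$, so the offending constant really occurs.) The statement you were asked to prove is thus slightly off; the correct version, which your argument establishes, is $\mathfrak{S}_\Theta\subset K_\Theta+\overline{K_\Theta}+\CCC$, and since $1\equiv k^\Theta_0$ modulo $\Theta H^2+\overline{\Theta H^2}$ this costs nothing for the remaining assertions of the lemma, which are the only ones used elsewhere in the paper.
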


\begin{proof}
See \cite[Section 3]{Sarason}.
\end{proof}

The formulas $\psi=\lim_{n\to\infty}\bar z^n T_\psi(z^n)$ and $P_-\psi=H_\psi(\bm{1})$ allow one to recapture simply the unique symbol of a Toeplitz operator as well as the unique symbol in $H^2_-$ of a
Hankel operator.
It is interesting to obtain a similar 
direct formula for the symbol of a truncated Toeplitz operator. 
Lemma~\ref{le:inclusion2} says that the symbol is unique if we assume, for instance, that 
$\phi=\phi_++\overline{\phi_-}$, with $\phi_\pm\in \Kth$ and 
$\phi_-(0)=0$. We can then recapture $\phi$ from the action of $A_\phi^\Theta$ on 
$k^\Theta_\lambda$ and $\tilde k^\Theta_\lambda$. Indeed, one can check that
\begin{equation}\label{eq:recap0}
\begin{split}
\Ath_\phi k^\Theta_0&=\phi_+  -\overline{\Theta(0)}\,\Theta\, \overline{\phi_-},\\
\Ath_\phi\tilde k^\Theta_0&=\omega \left(  \phi_- +\overline{\phi_+(0)} -\overline{\Theta(0)}\,\Theta\,\overline{\phi_+} \right).
\end{split}
\end{equation}
From the first equation we obtain $\phi_+(0)=\<\Ath_\phi k^\Theta_0,  k^\Theta_0\>$. 
%\begin{equation*}
%\begin{split}
%\<\Ath_\phi k^\Theta_0,  k^\Theta_0\>&=\phi_+(0) + (1-|\Theta(0)|^2) \overline{\phi_-(0)},\\
%\<\tilde k^\Theta_0,\Ath_\phi\tilde k^\Theta_0\>&=\overline{\phi_-(0)} + (1-|\Theta(0)|^2) \phi_+(0).
%\end{split}
%\end{equation*}
%From the last equations one can write $\phi_\pm(0)$ in terms of the scalar products in the left hand side. 
Then~\eqref{eq:recap0} imply, for any $\lambda\in\DDD$, 
\begin{equation*}
\begin{split}
\phi_+(\lambda)  -\overline{\Theta(0)}\Theta(\lambda)\overline{\phi_-(\lambda)}&
=\<\Ath_\phi k^\Theta_0,k^\Theta_\lambda\> ,\\
\overline{\phi_-(\lambda)}  -\Theta(0)\overline{\Theta(\lambda)}\phi_+(\lambda)  &=\<\Ath_\phi \tilde k_0^\Theta,\tilde k_\lambda^\Theta\>-\<\Ath_\phi k_0^\Theta,k_0^\Theta\>.
\end{split}
\end{equation*}
This is a linear system in $\phi_+(\lambda)$ and $\overline{\phi_-(\lambda)}$, whose determinant is $1-|\Theta(0)\Theta(\lambda)|^2>0$; therefore, $\phi_\pm$ can be made explicit in terms of the products in the right hand side.

Note, however, that  $\Ath_\phi$ is completely determined by 
its action on reproducing kernels, so one should be able to recapture the values of the symbol only from  $\Ath_\phi k^\Theta_\lambda$. 
The next proposition shows how one can achieve this goal; moreover, one can also obtain an estimate of the $L^2$-norm of the symbol. Namely, 
for an inner function $\Theta$ and any (not necessarily bounded) linear operator $T$ whose domain contains $K_\Theta\cap H^\infty$, define 
\begin{equation}\label{eq:rho}
\rho_r(T):= \sup_{\lambda\in\DDD}\|T h^\Theta_\lambda\|_2.
\end{equation}
We will have the occasion to come back to the quantity $\rho_r$ in the next section.

To simplify the next statement, denote 
\begin{equation}\label{eq:F}
F_{\lambda,\mu}= (I-\lambda S^*) \omega(A^\Theta_\phi k^\Theta_\lambda) 
- (I-\mu S^*) \omega(A^\Theta_\phi k^\Theta_\mu),\qquad \lambda,\mu\in\DDD.
\end{equation}

%:newprop
\begin{prop}\label{pr:recapturing} 
Let $\Theta$ be an inner function,  $A_\phi^\Theta$  a truncated Toeplitz operator, 
and $\mu\in\DDD$ such that $\Theta(\mu)\not=0$. Suppose $\phi=\phi_++\overline{\phi_-}$ is the unique 
decomposition of the symbol
with $\phi_\pm\in K_\Theta$, $\phi_-(\mu)=0$. 
Then
\begin{equation}\label{eq:recap4}
\phi_-(\lambda)=\frac{ \< (S-\mu)(I-\mu S^*)^{-1} F_{\lambda,\mu}, 
k^\Theta_\mu\>}{\Theta(\mu) (\overline{\Theta(0)}\Theta(\mu)-1)},\qquad \lambda\in\DDD,
\end{equation}
and $\phi_+=\omega(\psi_+)$, where
\begin{equation}\label{eq:psi}
\psi_+= (I-\mu S^*) \omega(A^\Theta_\phi k^\Theta_\mu)+ \Theta(\mu) S^* \phi_-. 
\end{equation}
Moreover, there exists a
constant $C$ depending only on $\Theta$ and $\mu$ such that 
\begin{equation}\label{eq:phi-rho}
\max\{\|\phi_{-}\|_2, \|\phi_{+}\|_2\}\le C \rho_r(\Ath_\phi).
\end{equation}
\end{prop}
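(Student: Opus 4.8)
The plan is to reduce all three assertions to a single ``master identity'' for $(I-\lambda S^*)\om(\Ath_\phi\ktl)$ that generalizes the computation \eqref{eq:recap0} from $\lambda=0$ to an arbitrary $\lambda\in\DDD$, and then read off the formulas and the estimate from it. First I would compute $\Ath_\phi\ktl$ explicitly: writing $\phi=\phi_++\overline{\phi_-}$ and splitting $\ktl=k_\lambda-\overline{\Th(\lambda)}\Th k_\lambda$ via \eqref{eq:orth}, one treats the analytic part $P_\Th(\phi_+\ktl)$ and the co-analytic part $P_\Th(\overline{\phi_-}\ktl)$ separately. Each, after projecting, reduces to the action of a co-analytic Toeplitz operator on the ordinary kernel $k_\lambda$ (which multiplies it by the conjugate of the relevant value at $\lambda$, by the eigenvector property recalled in Section~\ref{se:preliminaries}), together with the identities $\bar\Th\phi_+=\overline{z\,\om(\phi_+)}$ and $\Th\overline{\psi}=z\,\om(\psi)$ valid for $\psi\in\Kth$. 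Applying the antilinear isometry $\om$ (using $\om^2=\Id$, $\om(k_\lambda g)=\overline{k_\lambda}\,\om(g)$, $\om(\Th)=\bar z$ and $\om(\mathbf 1)=\bar z\Th$), collecting the emerging difference quotients $Q_\lambda g=(I-\lambda S^*)^{-1}S^*g=(g-g(\lambda))/(z-\lambda)$, and finally applying $(I-\lambda S^*)$ (which collapses each $Q_\lambda g$ back to $S^*g$), I expect to arrive at
\begin{equation}\label{eq:master}
(I-\lambda S^*)\,\om(\Ath_\phi\ktl)=\om(\phi_+)+\phi_-(\lambda)\,S^*\Th-\Th(\lambda)\,S^*\phi_-,\qquad\lambda\in\DDD.
\end{equation}
This is the main obstacle: the derivation is a fairly long piece of bookkeeping, and the delicate point is to get the cancellations right so that the leading term $\om(\phi_+)$ is genuinely independent of $\lambda$.

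Granting \eqref{eq:master}, the two formulas are pure algebra. Setting $\lambda=\mu$ and using $\phi_-(\mu)=0$ gives $(I-\mu S^*)\om(\Ath_\phi k^\Theta_\mu)=\om(\phi_+)-\Th(\mu)S^*\phi_-$, which is exactly the statement $\psi_+=\om(\phi_+)$ for $\psi_+$ as in \eqref{eq:psi}; since $\om^2=\Id$ this yields $\phi_+=\om(\psi_+)$. Subtracting the $\mu$-instance of \eqref{eq:master} from the $\lambda$-instance makes $\om(\phi_+)$ cancel and, using $\phi_-(\mu)=0$ again, identifies $F_{\lambda,\mu}$ from \eqref{eq:F} as
\[
F_{\lambda,\mu}=\phi_-(\lambda)\,S^*\Th-(\Th(\lambda)-\Th(\mu))\,S^*\phi_-.
\]
Because $(S-\mu)(I-\mu S^*)^{-1}S^*g=g-g(\mu)$, applying $(S-\mu)(I-\mu S^*)^{-1}$ turns the right-hand side into $\phi_-(\lambda)(\Th-\Th(\mu))-(\Th(\lambda)-\Th(\mu))\phi_-$. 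Pairing with $k^\Theta_\mu$ and using $\<\phi_-,k^\Theta_\mu\>=\phi_-(\mu)=0$ together with $\<\Th-\Th(\mu),k^\Theta_\mu\>=\Th(\mu)(\overline{\Th(0)}\Th(\mu)-1)$ leaves precisely \eqref{eq:recap4}.

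For the norm estimate I would argue as follows. Since $\om$ is isometric and $\htl=\ktl/\|\ktl\|_2$, \eqref{eq:rho} gives $\|\om(\Ath_\phi\ktl)\|_2=\|\Ath_\phi\ktl\|_2\le\|\ktl\|_2\,\rho_r(\Ath_\phi)$, whence $\|(I-\lambda S^*)\om(\Ath_\phi\ktl)\|_2\le 2\|\ktl\|_2\,\rho_r(\Ath_\phi)$. Fix a point $\lambda_0\in\DDD$ with $\Th(\lambda_0)\ne\Th(\mu)$, which exists because $\Th$ is nonconstant. Evaluating \eqref{eq:master} at $\lambda_0$ and at $\mu$ and subtracting solves for $S^*\phi_-=\bigl(\phi_-(\lambda_0)S^*\Th-F_{\lambda_0,\mu}\bigr)/(\Th(\lambda_0)-\Th(\mu))$; since \eqref{eq:recap4} bounds $|\phi_-(\lambda_0)|$ and $|\phi_-(0)|$ by $C\rho_r(\Ath_\phi)$ (its numerator being controlled by $\|F_{\lambda,\mu}\|_2\le 2(\|\ktl\|_2+\|k^\Theta_\mu\|_2)\rho_r(\Ath_\phi)$ evaluated at the fixed points $0,\lambda_0,\mu$), this gives $\|S^*\phi_-\|_2\le C\rho_r(\Ath_\phi)$. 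Then $\|\phi_-\|_2^2=|\phi_-(0)|^2+\|S^*\phi_-\|_2^2\le C\rho_r(\Ath_\phi)^2$, and feeding this back into the $\mu$-instance of \eqref{eq:master} bounds $\|\phi_+\|_2=\|\om(\phi_+)\|_2$ in the same way. All constants depend only on $\Th$ and $\mu$ (through the auxiliary point $\lambda_0$), which is exactly \eqref{eq:phi-rho}.
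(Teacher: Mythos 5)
Your proposal is correct and follows the paper's own proof almost step for step: your ``master identity'' is exactly the paper's key formula \eqref{eq:recap1} (with $\psi_+=\omega(\phi_+)$), obtained by the same computation of $P_\Theta(\phi_+k_\lambda^\Theta)$ and $P_\Theta(\overline{\phi_-}k_\lambda^\Theta)$, and the algebra yielding \eqref{eq:psi} and \eqref{eq:recap4} is identical. The only (harmless) variation is in the norm estimate, where you solve for $S^*\phi_-$ from the $\lambda_0$- and $\mu$-instances and use $\|\phi_-\|_2^2=|\phi_-(0)|^2+\|S^*\phi_-\|_2^2$, whereas the paper projects \eqref{eq:recap3} onto $K_\Theta$ and splits $\phi_-=h+ck_0^\Theta$ with $h\perp k_0^\Theta$; both give the same bound with constants depending only on $\Theta$ and $\mu$.
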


\begin{proof}
First note that for any
$\lambda\in \DDD$, we have
\begin{equation}\label{eq:recap1}
(I-\lambda S^*) \omega(A^\Theta_\phi k^\Theta_\lambda) = 
\psi_+ +\phi_-(\lambda)S^* \Theta -\Theta(\lambda) S^* \phi_-.
\end{equation}
Indeed,
\[
\begin{aligned}
P_\Theta (\phi_+ k^\Theta_\lambda) & = 
P_\Theta \bigg(\phi_+ \frac{1}{1-\overline \lambda z}\bigg) = 
\phi_+ +
\bar\lambda P_\Theta \bigg(\frac{\Theta \overline{z} \overline{\psi_+}}
{\overline z - \overline \lambda}\bigg) \\
& = 
\phi_+ +
\bar\lambda \Theta \overline{z}  \overline{\bigg(\frac{\psi_+ - \psi_+(\lambda)}
{z - \lambda}\bigg)}.
\end{aligned}
\]
Thus,
\[
\omega (A^\Theta_{\phi_+} k^\Theta_\lambda) = \psi_+
+ \lambda \frac{\psi_+ - \psi_+(\lambda)}{z - \lambda}=\frac{z\psi_+-\lambda\psi_+(\lambda)}{z-\lambda}.
\]

One can easily check that 
\begin{equation}\label{eq:formule-standard-shift}
(I-\lambda S^*)^{-1}S^*f=\frac{f-f(\lambda)}{z-\lambda},
\end{equation} 
for every function $f\in H^2$; then we obtain
\begin{equation}\label{eq:recap1-1}
(I-\lambda S^*) \omega(A^\Theta_{\phi_+} k^\Theta_\lambda) = \psi_+.
\end{equation}
On the other hand,
\[
\begin{aligned}
P_\Theta (\phi_- k^\Theta_\lambda) & =
P_\Theta\bigg(\overline z \frac{\overline{\phi_-} - \overline{\phi_-(\lambda)}}
{\overline z - \overline \lambda} + \frac{\overline{\phi_-(\lambda)}}
{1-\overline \lambda z}        \\
& - \overline{\Theta (\lambda)} \overline z \Theta
\frac{\overline{\phi_-} - \overline{\phi_-(\lambda)}}
{\overline z - \overline \lambda} - \overline{\Theta (\lambda)} \Theta
\frac{\overline{\phi_-(\lambda)}}
{1-\overline \lambda z}  \bigg)  \\
& = \overline{\phi_-(\lambda)}k^\Theta_\lambda - 
\overline{\Theta (\lambda)} \overline z \Theta
\overline{ \bigg(\frac{\phi_- - \phi_-(\lambda)}
{z - \lambda}\bigg)}.
\end{aligned}
\]
Hence,
\[
\omega (A^\Theta_{\phi_-} k^\Theta_\lambda) = 
\phi_-(\lambda) \frac{\Theta -\Theta(\lambda)}{z-\lambda} - 
\Theta(\lambda) \frac{\phi_--\phi_-(\lambda)}{z-\lambda}
\]
and
\begin{equation}\label{eq:recap1-2}
(I-\lambda S^*) \omega(A^\Theta_{\phi_-} k^\Theta_\lambda) =
\phi_-(\lambda) S^* \Theta  - \Theta(\lambda) S^* \phi_-.
\end{equation}
Thus \eqref{eq:recap1} follows immediately from \eqref{eq:recap1-1} and \eqref{eq:recap1-2}.
If we take $\lambda=\mu$ in (\ref{eq:recap1}), 
we get (remembering that $\phi_-(\mu)=0$)
\begin{equation}\label{eq:recap2-bis}
\psi_+= (I-\mu S^*) \omega(A^\Theta_\phi k^\Theta_\mu)+ \Theta(\mu) S^* \phi_-.
\end{equation}
Now plugging~\eqref{eq:recap2-bis} into~\eqref{eq:recap1} yields 
\begin{equation*}
\phi_-(\lambda)S^* \Theta +(\Theta(\mu) -\Theta(\lambda)) 
S^* \phi_-= F_{\lambda,\mu}.
\end{equation*}
Therefore, applying  $(S-\mu)(I-\mu S^*)^{-1}$ and using $\phi_-(\mu)=0$ and \eqref{eq:formule-standard-shift}, we obtain
\begin{equation}\label{eq:recap3}
\phi_-(\lambda)(\Theta-\Theta(\mu)) + (\Theta(\mu) -\Theta(\lambda))  \phi_-=(S-\mu)(I-\mu S^*)^{-1} F_{\lambda,\mu}.
\end{equation}
Finally, we take the scalar product of both sides with $k^\Theta_\mu$ and 
use the fact that $\Theta\perp K_\Theta$, $P_\Theta \bm{1}=\bm{1}-\overline{\Theta(0)}\Theta$, 
and again $\phi_-(\mu)=0$. Therefore
\[
-\phi_-(\lambda)\Theta(\mu) (1-\overline{\Theta(0)}\Theta(\mu))= 
\< (S-\mu)(I-\mu S^*)^{-1} F_{\lambda,\mu}, k^\Theta_\mu\>,
\]
which immediately implies \eqref{eq:recap4}.

To obtain the boundedness of the $L^2$ norms, fix now $\lambda\in\DDD$ such that $\Th(\lambda)\not=\Th(\mu)$.
Since
\[
\|(I-\mu S^*) \omega(A^\Theta_\phi k^\Theta_\mu)\|_2
\le 2\|A^\Theta_\phi k^\Theta_\mu\|_2 
\le 2 \|k_\mu^\Theta\|_2  \rho_r(A_\phi^\Theta)
%\le 
%C_1 \rho_r(A^\Theta_\phi).
\]
and a similar estimate holds for $\|(I-\lambda S^*) \omega(A^\Theta_\phi k^\Theta_\lambda)\|_2$, we have $\|F_{\lambda,\mu}\|_2\le C_1\rho(\Ath_\phi)$, where $C_1$, as well as the next constants appearing in this proof, depends only on $\Theta$, $\lambda$, $\mu$.
By~\eqref{eq:recap3}, it follows that  
\[
\|\phi_-(\lambda)(\Theta-\Theta(\mu)) + (\Theta(\mu) -\Theta(\lambda))  \phi_-\|_2 \le 
C_2\rho_r(A_\phi^\Theta).
\]
Projecting onto $K_\Theta$ decreases the norm; since $P_\Th(\phi_-(\lambda)\Theta)=0$ and $P_\Th(\bm{1})=k^\Th_0$, we obtain
\[
\| -\Theta(\mu)
\phi_-(\lambda)k_0^\Theta+(\Theta(\mu) - \Theta(\lambda))\phi_-\|_2 \le C_2\rho_r(A^\Theta_\phi).
\]

Write now $\phi_- = h+c k_0^\Theta$ with $h\perp k_0^\Theta$.
Then
$
\|(\Theta(\mu) - \Theta(\lambda))h\|_2
\le C_2\rho_r(A_\phi^\Theta )$, whence
$
\|h\|_2\le C_3\rho_r(A_\phi^\Theta )$.
Since $\phi_- (\mu ) = 0$, we have $h(\mu) + ck_0^\Theta(\mu) = 0$,
which implies that 
\[
 |c| =|k_0^\Theta(\mu)|^{-1 }|h(\mu)| \le C_4
 \rho_r(A_\phi^\Theta)
\]
Therefore we have $\|\phi_-\|_2\le C_5\rho_r(\Ath_\phi)$. Finally,~(\ref{eq:psi}) yields a similar estimate for~$\psi_+$ and then for~$\phi_+$.
\end{proof}
%%%%%%%

%
%
%
%

The following proposition yields a relation between truncated Toeplitz operators and usual 
Hankel operators.

\begin{prop}
With respect to the decompositions $H^2_-=\bar\Th K_\Th\oplus \bar\Th H_-^2$, 
$H^2=K_\Th\oplus \Th H^2$, the 
operator $H_{\bar\Th}^* H_{\bar\Th \phi}H_{\bar\Th}^* :H^2_-\to H^2$ 
has the matrix
\begin{equation}\label{eq:matrixH}
\begin{pmatrix}
A_\phi^\Theta M_\Th & 0\\ 0 & 0
\end{pmatrix}.
\end{equation}
\end{prop}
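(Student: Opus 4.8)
The plan is to compute the composite $H_{\bar\Th}^* H_{\bar\Th\phi}H_{\bar\Th}^*$ directly on each summand of the two decompositions, using only the elementary description of the Hankel factors and the formula $P_\Th=P_+-\Th P_+\bar\Th$. First I would record the decomposition of $H^2_-$ asserted in the statement: multiplying the orthogonal sum $L^2=H^2_-\oplus K_\Th\oplus\Th H^2$ by the unimodular factor $\bar\Th$ gives $L^2=\bar\Th H^2_-\oplus\bar\Th K_\Th\oplus H^2$, and comparing with $L^2=H^2_-\oplus H^2$ yields $H^2_-=\bar\Th K_\Th\oplus\bar\Th H^2_-$, as claimed.

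Next I would describe the outer and middle factors. The adjoint $H_{\bar\Th}^*$ sends $g\in H^2_-$ to $P_+(\Th g)$, while $H_{\bar\Th\phi}$ sends $u\in H^2$ to $P_-(\bar\Th\phi u)$. For $g=\bar\Th k$ with $k\in H^2_-$ we have $\Th g=k\in H^2_-$, so $H_{\bar\Th}^*g=P_+k=0$; for $g=\bar\Th f$ with $f\in\Kth$ we have $\Th g=f\in\Kth\subset H^2$, so $H_{\bar\Th}^*(\bar\Th f)=f$. Since $H_{\bar\Th}^*$ is the first factor applied, the whole operator already annihilates the second summand $\bar\Th H^2_-$ of the domain, which accounts for the vanishing of the second column of the matrix. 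On the first summand the identification $\bar\Th f\mapsto f$ is precisely multiplication by $\Th$, i.e. the operator $M_\Th$ appearing in the $(1,1)$ entry; so it remains to evaluate $H_{\bar\Th}^* H_{\bar\Th\phi}$ on a vector $f\in\Kth\cap H^\infty$ and to check that the result equals $\Ath_\phi f$ and lies in $\Kth$.

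The heart of the matter is the identity
\[
P_+\bigl(\Th\,P_-(\bar\Th\phi f)\bigr)=P_\Th(\phi f)=\Ath_\phi f,\qquad f\in\Kth\cap H^\infty.
\]
To prove it I would set $w=\bar\Th\phi f$ and split $\Th P_-w=\Th w-\Th P_+w=\phi f-\Th P_+(\bar\Th\phi f)$; applying $P_+$ gives $P_+(\Th P_-w)=P_+(\phi f)-P_+\bigl(\Th P_+(\bar\Th\phi f)\bigr)$. The one point needing care---and the main (mild) obstacle---is that $P_+(\bar\Th\phi f)\in H^2$ and $\Th\in H^\infty$, so $\Th P_+(\bar\Th\phi f)$ is already in $H^2$ and the outer $P_+$ may be dropped; hence $P_+(\Th P_-w)=P_+(\phi f)-\Th P_+(\bar\Th\phi f)$, which is exactly $P_\Th(\phi f)$ by $P_\Th=P_+-\Th P_+\bar\Th$.

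Finally, since $\Ath_\phi f=P_\Th(\phi f)\in\Kth$, the output of the composite on $\bar\Th f$ has no component in $\Th H^2$, so the lower-left entry of the matrix is $0$; combined with $M_\Th(\bar\Th f)=f$ this identifies the $(1,1)$ entry as $\Ath_\phi M_\Th$. Throughout one works on the dense domain $\bar\Th(\Kth\cap H^\infty)$ on which all the (a priori unbounded) operators are defined, so that the matrix identity holds on a core and, when $\Ath_\phi$ is bounded, extends to all of $H^2_-$ by continuity.
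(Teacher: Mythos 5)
Your proof is correct and follows essentially the same route as the paper: compute $H_{\bar\Th}^*$ on each summand of $H^2_-$ (killing $\bar\Th H^2_-$ and acting as $M_\Th$ on $\bar\Th K_\Th$), then identify $H_{\bar\Th}^*H_{\bar\Th\phi}$ on $K_\Th$ with $\Ath_\phi$ via the projection formula. The only cosmetic difference is that you re-derive the identity $P_\Th=P_+M_\Th P_-M_{\bar\Th}$ from $P_\Th=P_+-\Th P_+\bar\Th$, whereas the paper invokes it directly.
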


\begin{proof}
If $f\in \bar\Th H_-^2$, then $H_{\bar\Th}^* f=0$. If $f\in \bar\Th K_\Th$, then $H_{\bar\Th}^* f=\Th f\in K_\Th$. Since $P_\Th=P_+M_\Th P_- M_{\bar\Th}$, it follows that,
for $f\in K_\Th$, 
\[
A_\phi^\Theta f=P_\Th M_\phi  f= P_+M_\Th P_- M_{\bar\Th}M_\phi f= H_{\bar\Th}^* H_{\bar\Theta\phi}f,
\]
and therefore, if $f\in \bar\Th K_\Th$, then $A_\phi^\Theta \Th f= H_{\bar\Th}^* H_{\bar\Th \phi}H_{\bar\Th}^* f$ as required.
\end{proof}

The non-zero entry in~\eqref{eq:matrixH} consists in the isometry $M_\Th:\bar\Th K_\Th\to K_\Th$, followed by $A_\phi^\Theta$ acting on $K_\Th$. There is therefore a close connection between properties of $A_\phi^\Theta$ and properties of the corresponding 
product of three Hankel operators. Such products of Hankel operators have been studied for instance in \cite{Axler-Sarason,Brown-Halmos,Xia-Zheng}.

\begin{rem}\label{re:halfspace}
Truncated Toeplitz operators can be defined also on model spaces of 
$H^2(\CCC_+)$, that is, $\bm{K}_{\bm\Theta}= H^2(\CCC_+)\ominus \bm\Theta  
H^2(\CCC_+)$ for an inner function $\bm\Theta$ in the upper half-plane 
$\CCC_+$. We start then with a symbol $\bm \phi\in (t+i)L^2(\RRR)$ 
(which contains $L^\infty(\RRR)$) and define (for $f\in\bm{K_\Theta}
\cap (z+i)^{-1}H^\infty(\CCC_+)$, a dense 
subspace of $\bm{K_\Theta}$) the truncated Toeplitz operator 
$\bm A^{\bm \Theta}_{\bm \phi}f=P_{{\bm\Theta}} (\bm \phi f)$.  

Let us briefly explain the relations between the truncated Toeplitz operators corresponding to model spaces on the upper half-plane and those corresponding to model spaces on the unit disk. If $\Theta=\bm{\Theta}\circ u^{-1}$ and $\psi=\phi\circ u^{-1}$, using the fact that $\mathcal{U}P_\Theta\mathcal{U}^*=P_{{\bm\Theta}}$ and $\mathcal{U}M_\psi=M_\phi\mathcal{U}$, we easily obtain 
\[
\bm A^{\bm \Theta}_{\bm \phi}=\mathcal{U}A_\psi^\Theta\mathcal{U}^*.
\]
In particular, if $\bm{A}$ is a linear operator on $\bm{K_\Theta}$, then $\bm{A}$ 
is a truncated Toeplitz operator on $\bm{K}_{\bm\Theta}$ 
if and only if $A=\UU^* \bm{A} \UU$ is a truncated Toeplitz operator 
on $\Kth$, and $\bm{\phi}$ is a symbol for $\bm{A}$ if and only 
if $\psi:=\phi\circ u^{-1}$ is a symbol for $A$. It follows that 
$\bm{A}$ is bounded (or has a bounded symbol) if and only if  
$A$ is bounded (respectively, has a bounded symbol). Moreover 
we easily deduce from \eqref{transfer-disque-demi-plan-noyau} that
\[
\|\bm{A_\phi^\Theta}\bm{h_\mu^\Theta}\|_2=\|A_\psi^\Theta h_\lambda^\Theta\|_2\quad\hbox{and}\quad \|\bm{A_\phi^\Theta}\bm{\tilde h_\mu^\Theta}\|_2=\|A_\psi^\Theta \tilde h_\lambda^\Theta\|_2,
\]
for every $\mu\in\CCC_+$ and $\lambda=u(\mu)$. Finally, the truncated Toeplitz operator $\bm A^{\bm \Theta}_{\bm \phi}=0$ if and only if the symbol $\bm\phi \in (t+i)\left (\bm\Theta{H^2(\CCC_+)}\oplus \overline{\bm\Theta H^2(\CCC_+)}\right)$ (note that the  sum is in this case orthogonal, since $H^2(\CCC_+)\perp \overline{H^2(\CCC_+)}$).
\end{rem}

\mysection{Existence of bounded symbols and the Reproducing Kernel Thesis}

%:2
As noted in Section~\ref{se:Toeplitz}, a Toeplitz operator $T_\phi$ has a unique symbol,  $T_\phi$ is bounded if and only if this symbol is in $L^\infty$, and the map $\phi\mapsto T_\phi$ is isometric from $L^\infty$ onto the space of bounded Toeplitz operators on $H^2$. The situation is more complicated for Hankel operators: there is no uniqueness of the symbol, while the map $\phi\mapsto H_\phi$ is contractive and onto from $L^\infty$ to the space of bounded Hankel operators (the boundedness condition $P_-\phi\in BMO$ is equivalent to the fact that any bounded Hankel operator has a symbol in $L^\infty$).

In the case of truncated Toeplitz operators, the map $\phi\mapsto A_\phi^\Theta$
is again contractive from $L^\infty$ to $\TT(K_\Theta)$.
It is then natural to ask whether it is onto, that is, whether any 
bounded truncated Toeplitz operator is a compression of a bounded
Toeplitz operator in $H^2$.
This question has been asked by Sarason in~\cite{Sarason}.

\begin{question1}
Does every bounded truncated Toeplitz operator on $\Kth$ possess 
an  $L^\infty$ symbol?
\end{question1}

One may expect the answer to depend 
on the function $\Theta$, and indeed 
we show below that it is the case. Assume that for some inner function $\Theta$, any operator in $\TT(K_\Theta)$ has a bounded symbol. Then if follows from the open mapping theorem that there exists a constant $C$ such that for any $A\in \TT(K_\Theta)$ 
one can find $\phi\in L^\infty$ with $A=A^\Theta_\phi$ and $\|\phi\|_\infty\le C\|A\|$.

% The existence of bounded symbols is guaranteed for certain classes of truncated Toeplitz operators. First, 
% it is noted in \cite{Sarason} that if $A$ commutes with  $S_\Theta:=A_z^\Theta$ (which is equivalent to say that it is a truncated Toeplitz operator with a symbol in $H^2$), then, by a corollary of the commutant lifting theorem, $A$ has an $H^\infty$ symbol with norm equal to the norm of $A$.
% 
% 
% Similarly, if $A$  commutes with $S_\Theta^*$ (which is equivalent to say that it is a truncated Toeplitz operator with a symbol in $\overline{H^2}$), then it has a symbol in $\overline{H^\infty}$ (and thus in $L^\infty$) whose norm is equal to $\|A\|$. Therefore for truncated Toeplitz operator with symbols in $H^2$ or $\overline{H^2}$ the existence of a bounded symbol is clear. 

A second natural question that may be asked about truncated Toeplitz operators 
is the Reproducing Kernel 
Thesis (RKT). This is related to the quantity $\rho_r$ defined in~\eqref{eq:rho}. The functions $h_\lambda^\Theta$  have all norm~1, so
 if $\Ath_\phi$  
is bounded then obviously $\rho_r(\Ath_\phi)\le \|\Ath_{\phi}\|_2$. 
The following question is then natural:
\begin{question2}
(RKT for truncated Toeplitz operators): 
let $\Theta$ be an inner function and $\varphi\in L^2$. Assume that 
$\rho_r(A_\varphi^\Theta)<+\infty$. Is $A_\varphi^\Theta$  bounded on~$K_\Theta$?
\end{question2}

As we have seen in the introduction, the RKT is true for various classes of operators related to the truncated Toeplitz operators, and it seems natural to investigate it for this class. 
We will see in Section~\ref{se:negative} that the answer to this 
question is in general negative. 

As we will show below, 
it is more natural to restate the RKT by including in the 
hypothesis also the functions $\tilde h_\lambda^\Theta$. 
Thus, for any linear operator $T$  whose domain contains $K_\Theta\cap H^\infty$, define
\[
\rho_d(T)=\sup_{\lambda\in\DDD}\|T \tilde h_\lambda^\Theta\|_2,
\]
and $\rho(T)=\max\{\rho_r(T),\rho_d(T)\}$. 
The indices $r$ and $d$ in  notation $\rho_r$ and $\rho_d$ stand 
for ''reproducing kernels`` and ''difference quotients``.

Note that if $\Ath_\phi$ is a truncated Toeplitz operator, then by \eqref{eq:omToom}, 
we have $\rho_d(A_\phi^\Theta)=\rho_r((A_\phi^\Theta)^*)$, and then
\[
\rho(\Ath_\phi)=\max\{\rho_r(A_\phi^\Theta),\rho_r((A_\phi^\Theta)^*)\}.
\]

\begin{question3} Let $\Theta$ be an inner function and $\varphi\in L^2$. Assume that $\rho(\Ath_\phi)<\infty$. Is $A_\varphi^\Theta$  bounded on $K_\Theta$?
\end{question3}

In Section~\ref{se:negative}, we will show that the answer 
to Questions 1 and 2 may be negative. Question 3 remains in general open. In Section~\ref{mysection:positive-results}, we will give some examples of spaces $\Kth$ on which the answers to Questions 1 and 3 are positive.

In the rest of this section we will discuss the existence of bounded symbols and the RKT for some simple cases.

%:bonsall

First, it is easy to deal with analytic or antianalytic symbols. The next proposition is a straightforward consequence of Bonsall's theorem \cite{Bon} and the commutant lifting theorem. The equivalence between $\mathrm{(i)}$ and $\mathrm{(ii)}$ has already been noticed in \cite{Sarason}.

\begin{prop}\label{pr:bonsall} 
Let $\phi\in H^2$ and let $A_\phi^\Theta$ be a truncated Toeplitz operator. Then the following assertions are equivalent:
\begin{enumerate}
 \item[$\mathrm{(i)}$] $A_\phi^\Theta$ has a bounded symbol;
\item[$\mathrm{(ii)}$] $A_\phi^\Theta$ is bounded;
\item[$\mathrm{(iii)}$] $\rho_r(A_\phi^\Theta)<+\infty$.
\end{enumerate}
More precisely there exists a universal constant $C>0$ such that any truncated Toeplitz operator $\Ath_\phi$ has a symbol $\phi_0$ with $\|\phi_0\|_\infty\le C\rho_r(\Ath_\phi)$.
\end{prop}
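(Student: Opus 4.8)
The plan is to prove the trivial chain $\mathrm{(i)}\Rightarrow\mathrm{(ii)}\Rightarrow\mathrm{(iii)}$ and then to close the cycle with the two quantitative implications $\mathrm{(iii)}\Rightarrow\mathrm{(ii)}$ (via Bonsall's theorem) and $\mathrm{(ii)}\Rightarrow\mathrm{(i)}$ (via commutant lifting), tracking norms so that the universal constant falls out at the end. Here $\mathrm{(i)}\Rightarrow\mathrm{(ii)}$ is immediate from $\|A_\psi^\Theta\|\le\|\psi\|_\infty$ for any $L^\infty$ symbol, and $\mathrm{(ii)}\Rightarrow\mathrm{(iii)}$ is immediate since each $h_\lambda^\Theta$ is a unit vector, giving $\rho_r(\Ath_\phi)\le\|\Ath_\phi\|$.

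For $\mathrm{(iii)}\Rightarrow\mathrm{(ii)}$ I would pass to the Hankel operator $H_{\bar\Th\phi}$, using the identity $\Ath_\phi f=H_{\bar\Th}^*H_{\bar\Th\phi}f$ for $f\in\Kth$ established in the proof of the proposition relating truncated Toeplitz operators to Hankel operators. Since $\phi$ is analytic, $H_{\bar\Th\phi}g=H_{\bar\Th}(\phi g)$, so $H_{\bar\Th\phi}$ annihilates $\Th H^2$ and sends everything into $\bar\Th\Kth$ (the range of $H_{\bar\Th}$); in particular $H_{\bar\Th\phi}k_\lambda=H_{\bar\Th\phi}k_\lambda^\Theta$ by the orthogonal decomposition \eqref{eq:orth}. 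As recorded in that same proof, $H_{\bar\Th}^*$ acts \emph{isometrically} on $\bar\Th\Kth$ (it coincides with $M_\Th$ there), whence $\|\Ath_\phi h_\lambda^\Theta\|=\|H_{\bar\Th\phi}h_\lambda^\Theta\|$. Comparing the two normalizations and using $1-|\Th(\lambda)|^2\le 1$, I would obtain, for every $\lambda\in\DDD$,
\[
\|H_{\bar\Th\phi}h_\lambda\|=\sqrt{1-|\Th(\lambda)|^2}\,\|H_{\bar\Th\phi}h_\lambda^\Theta\|\le\|H_{\bar\Th\phi}h_\lambda^\Theta\|=\|\Ath_\phi h_\lambda^\Theta\|\le\rho_r(\Ath_\phi).
\]
Thus the reproducing-kernel quantity of the Hankel operator $H_{\bar\Th\phi}$ is dominated by $\rho_r(\Ath_\phi)$, and Bonsall's theorem \cite{Bon} yields a universal $C$ with $\|H_{\bar\Th\phi}\|\le C\,\rho_r(\Ath_\phi)$. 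Since $H_{\bar\Th}^*$ is a contraction, $\|\Ath_\phi\|\le\|H_{\bar\Th\phi}\|\le C\,\rho_r(\Ath_\phi)$, which is $\mathrm{(ii)}$ with the desired quantitative control.

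For $\mathrm{(ii)}\Rightarrow\mathrm{(i)}$ I would first note that, for analytic $\phi$, the operator $\Ath_\phi$ commutes with the compressed shift $A_z^\Theta$: a short computation using that $M_\phi$ leaves $\Th H^2$ invariant gives $\Ath_\phi A_z^\Theta=A_{\phi z}^\Theta=A_z^\Theta\Ath_\phi$ on $\Kth\cap H^\infty$. The minimal isometric dilation of $A_z^\Theta$ is the shift $S$ on $H^2$, whose commutant consists of the analytic Toeplitz operators $T_\psi$ with $\psi\in H^\infty$; commutant lifting then produces $\psi\in H^\infty$ with $\|\psi\|_\infty=\|\Ath_\phi\|$ and $\Ath_\phi=P_\Th T_\psi|_{\Kth}=A_\psi^\Theta$, so $\psi$ is a bounded symbol. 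Assembling the pieces, if $\rho_r(\Ath_\phi)<\infty$ then $\Ath_\phi$ is bounded with $\|\Ath_\phi\|\le C\rho_r(\Ath_\phi)$, and it admits a symbol $\phi_0=\psi$ with $\|\phi_0\|_\infty=\|\Ath_\phi\|\le C\rho_r(\Ath_\phi)$, the constant $C$ being Bonsall's universal one.

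The main obstacle is the exact identity $\|\Ath_\phi h_\lambda^\Theta\|=\|H_{\bar\Th\phi}h_\lambda^\Theta\|$: one must verify that the range of $H_{\bar\Th\phi}$ really lands inside $\bar\Th\Kth$, the precise subspace on which $H_{\bar\Th}^*$ is isometric rather than merely contractive, since otherwise only an inequality in the wrong direction is available. Everything else is bookkeeping with the normalizing constants $\sqrt{(1-|\lambda|^2)/(1-|\Th(\lambda)|^2)}$ together with the two named theorems.
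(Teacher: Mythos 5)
Your proposal is correct and follows essentially the same route as the paper: the trivial implications, commutant lifting for $\mathrm{(ii)}\Rightarrow\mathrm{(i)}$, and for $\mathrm{(iii)}\Rightarrow\mathrm{(ii)}$ the reduction to the Hankel operator $H_{\bar\Th\phi}$ (whose kernel contains $\Th H^2$ and whose range lies in $\bar\Th K_\Th$), the identity $\|H_{\bar\Th\phi}h_\lambda\|_2=(1-|\Th(\lambda)|^2)^{1/2}\|\Ath_\phi h^\Theta_\lambda\|_2\le\rho_r(\Ath_\phi)$, and Bonsall's theorem. The only cosmetic difference is that the paper records the block form \eqref{eq:dec2} to get $\|\Ath_\phi\|=\|H_{\bar\Th\phi}\|$ exactly, whereas you settle for the inequality via contractivity of $H_{\bar\Th}^*$, which suffices.
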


\begin{proof} It is immediate that $\mathrm{(i)}\Longrightarrow\mathrm{(ii)}\Longrightarrow\mathrm{(iii)}$. The implication $\mathrm{(ii)}\Longrightarrow\mathrm{(i)}$ has already been noted in \cite{Sarason}; indeed if $\phi\in H^2$ and $A_\phi^\Theta$ is bounded, then $A_\phi^\Theta$ commutes with $S_\Theta:=A_z^\Theta$ and then, by a corollary of the commutant lifting theorem, $A_\phi^\Theta$ has an $H^\infty$ symbol with norm equal to the norm of $A_\phi^\Theta$.  

So it remains to prove that there exists a constant $C>0$ such that $\|A_\phi^\Theta\|\leq C \rho_r(A_\phi^\Theta)$. If $f\in K_\Th\cap H^\infty$, then $\phi f\in H^2$. Therefore $P_\Th (\phi f)=
\Th P_-(\bar\Th \phi f)$, or, in other words,
$A_\phi^\Theta(f)=\Th H_{\bar\Th \phi}f$.

On the other hand, $\Theta H^2 \subset \ker H_{\bar\Th \phi}$, and therefore, with
respect to the decompositions $H^2= K_\Th\oplus \Th H^2$, $H^2_-=\bar\Th K_\Th\oplus 
\bar\Th H^2_-$, one can write 
\begin{equation}\label{eq:dec2}
H_{\bar\Th \phi}=
\begin{pmatrix}
\bar\Th A_\phi^\Theta & 0\\ 0 & 0
\end{pmatrix}.
\end{equation}
It follows that $A_\phi^\Theta$ is bounded if and only if $H_{\bar\Th \phi}$ is. By Bonsall's
Theorem \cite{Bon}, 
there exists a universal constant $C$ (independent of $\phi$ and $\Theta$) 
such that the boundedness of $H_{\bar\Th \phi}$ is equivalent to
$\sup_{\la\in\DDD}\|H_{\bar\Th \phi} h_\la\|_2<\infty$, and
\[
\|H_{\bar\Th \phi}\|\le C \sup_{\la\in\DDD}\|H_{\bar\Th \phi} h_\la\|_2.
\]
But, again by~\eqref{eq:dec2} and using \eqref{eq:klth} and \eqref{eq:cafe}, we have 
\[
H_{\bar\Th \phi} h_\la= \bar\Th A_\phi^\Theta P_\Th h_\la=\bar\Th(1-|\Th(\la)|^2)^{1/2} A_\phi^\Theta \htl, 
\]
and thus $\sup_{\la\in\DDD}\|H_{\bar\Th \phi} h_\la\|_2\le \sup_{\la\in\DDD}\|A_\phi^\Theta \htl\|_2=\rho_r(A_\phi^\Theta)$. The proposition is proved.
\end{proof}

A similar result is valid for antianalytic symbols.

\begin{prop}\label{pr:cobonsall} 
Let $\phi\in \overline{H^2}$ and let $A_\phi^\Theta$ be a truncated Toeplitz operator. Then the following assertions are equivalent:
\begin{enumerate}
 \item[$\mathrm{(i)}$] $A_\phi^\Theta$ has a bounded symbol;
\item[$\mathrm{(ii)}$] $A_\phi^\Theta$ is bounded;
\item[$\mathrm{(iii)}$] $\rho_d(A_\phi^\Theta)<+\infty$.
\end{enumerate}
More precisely there exists a universal constant $C>0$ such that any  truncated Toeplitz operator $\Ath_\phi$ has a symbol $\phi_0$ with $\|\phi_0\|_\infty\le C \rho_d(A_\phi^\Theta)$.

\end{prop}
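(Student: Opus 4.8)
The plan is to deduce this result from Proposition~\ref{pr:bonsall} by the duality furnished by the conjugation $\omega$. Since $\phi\in\overline{H^2}$, its conjugate $\bar\phi$ belongs to $H^2$, and formula~\eqref{eq:omToom} gives $(A_\phi^\Theta)^*=A_{\bar\phi}^\Theta$; thus the adjoint of our operator is a truncated Toeplitz operator with an \emph{analytic} symbol, to which Proposition~\ref{pr:bonsall} applies directly.

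First I would record the three elementary facts that make the dualization work. An operator is bounded if and only if its adjoint is, with equal norm. Next, $A_\phi^\Theta$ has a bounded symbol if and only if $(A_\phi^\Theta)^*$ does: indeed, if $\phi_0\in L^\infty$ satisfies $A_{\phi_0}^\Theta=A_\phi^\Theta$, then~\eqref{eq:omToom} yields $A_{\bar\phi_0}^\Theta=(A_\phi^\Theta)^*$, so $\bar\phi_0\in L^\infty$ is a symbol for the adjoint; this correspondence $\phi_0\mapsto\bar\phi_0$ is involutive and preserves the sup-norm. Finally, as already observed just before Question~3, $\rho_d(A_\phi^\Theta)=\rho_r((A_\phi^\Theta)^*)$.

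With these observations the equivalence is immediate. Applying Proposition~\ref{pr:bonsall} to the analytic-symbol operator $A_{\bar\phi}^\Theta=(A_\phi^\Theta)^*$ shows that the three conditions ``$(A_\phi^\Theta)^*$ has a bounded symbol'', ``$(A_\phi^\Theta)^*$ is bounded'', and ``$\rho_r((A_\phi^\Theta)^*)<+\infty$'' are equivalent. Transporting each back across the adjoint by means of the facts above turns them into exactly $\mathrm{(i)}$, $\mathrm{(ii)}$, and $\mathrm{(iii)}$ of the present statement. For the quantitative part, Proposition~\ref{pr:bonsall} furnishes a symbol $\psi_0$ of $(A_\phi^\Theta)^*$ with $\|\psi_0\|_\infty\le C\rho_r((A_\phi^\Theta)^*)$; then $\phi_0:=\bar\psi_0$ is a symbol of $A_\phi^\Theta$ with the same sup-norm, and since $\rho_r((A_\phi^\Theta)^*)=\rho_d(A_\phi^\Theta)$ we obtain $\|\phi_0\|_\infty\le C\rho_d(A_\phi^\Theta)$ with the same universal constant.

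There is essentially no genuine obstacle here: the whole argument is a clean transfer through $\omega$. The only point requiring a moment's care — and the one I would write out explicitly — is the second of the three facts above, namely that the symbol correspondence $\phi_0\mapsto\bar\phi_0$ really does send symbols of an operator to symbols of its adjoint. This is precisely the content of~\eqref{eq:omToom} together with the conjugate-linearity of $\omega$, and it is what guarantees that ``having a bounded symbol'' is a self-dual property, so that the analytic case and the antianalytic case are genuinely interchangeable.
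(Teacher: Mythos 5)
Your proof is correct and follows essentially the same route as the paper: apply Proposition~\ref{pr:bonsall} to $A_{\bar\phi}^\Theta=(A_\phi^\Theta)^*$, which has analytic symbol, and transfer the conclusions back using \eqref{eq:omToom} together with the identity $\rho_r(A_{\bar\phi}^\Theta)=\rho_d(A_\phi^\Theta)$. Your explicit verification that the correspondence $\phi_0\mapsto\bar\phi_0$ carries bounded symbols of the adjoint to bounded symbols of the operator, preserving the sup-norm, is a useful point that the paper leaves implicit.
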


\begin{proof}
Suppose  $\phi\in \overline{H^2}$.
Since $\|A_\phi^\Theta\|=\|(A_\phi^\Theta)^*\|=
\|A_{\bar\phi}^\Theta\|$, and $\bar\phi\in H^2$,
we may apply Proposition~\ref{pr:bonsall} to $A_{\bar\phi}^\Theta$ 
because by~\eqref{eq:omToom}, we have
\[
\rho_r(A_{\bar\phi}^\Theta)=\sup_{\la\in\DDD}\|A_{\bar\phi}^\Theta \htl\|_2 =\sup_{\la\in\DDD}\|A_{\phi}^\Theta\om \htl\|_2
=\sup_{\la\in\DDD}\|A_{\phi}^\Theta \httl\|_2=\rho_d(A_\phi^\Theta).\qedhere
\]
\end{proof}

As we have seen, if $\phi$ is bounded, then obviously the 
truncated Toeplitz operator $A_\phi^\Theta$ is bounded. We will see 
now that one can get a slightly more general result. It involves 
the so-called {\emph {Carleson curves}} associated with an inner 
function (see for instance \cite{garnett}). Recall that if $\Theta$ 
is an inner function and $\alpha\in (0,1)$, then the system of Carleson 
curves $\Gamma_\alpha$ associated to $\Theta$ and $\alpha$ is the countable 
union of closed simple and rectifiable curves in $\clos\DDD$ such that:
\begin{enumerate}
\item[$\bullet$] The interior of curves in $\Gamma_\alpha$ are pairwise disjoint.
\item[$\bullet$] There is a constant $\eta(\alpha)>0$ such that for every $z\in\Gamma_\alpha\cap\DDD$ we have
\begin{eqnarray}\label{eq:carleson-curve1}
\eta(\alpha)\leq |\Theta(z)|\leq\alpha.
\end{eqnarray}
\item[$\bullet$] Arclength $|dz|$ on $\Gamma_\alpha$ is a Carleson measure, which means that there is a constant $C>0$ such that 
\[
\int_{\Gamma_\alpha}|f(z)|^2\,|dz|\leq C \|f\|_2^2,
\]
for every function $f\in H^2$.
\item[$\bullet$] For every function $\phi\in H^1$, we have
\begin{eqnarray}\label{eq:cauchy-espace-modele}
\int_\TTT\frac{\phi(z)}{\Theta(z)}dz=\int_{\Gamma_\alpha}\frac{\phi(z)}{\Theta(z)}dz.
\end{eqnarray}
\end{enumerate}

%We will use this construction of curves to give a sufficient condition on the symbol $\varphi\in L^2$ which gives a bounded truncated Toeplitz operator (with a bounded symbol).  

\begin{prop}\label{Lem:cle-curve}
Let $\phi\in H^2$ and assume that $|\phi||dz|$ is a Carleson measure on $\Gamma_\alpha$. Then $A_\phi^\Theta$ is a bounded truncated Toeplitz operator on $K_\Theta$ and it has a bounded symbol.
\end{prop}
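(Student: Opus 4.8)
The plan is to reduce the whole statement to the boundedness of $A_\phi^\Theta$ and then invoke Proposition~\ref{pr:bonsall}. Since $\phi\in H^2$, once we know that $A_\phi^\Theta$ is bounded, that proposition immediately furnishes a bounded (indeed $H^\infty$) symbol. Thus the entire content is to establish an estimate of the form $|\<A_\phi^\Theta f,g\>|\less C\|f\|_2\|g\|_2$ for $f,g$ ranging over the dense domain $K_\Theta\cap H^\infty$, and the role of the Carleson curve is to convert this quadratic form into an integral over $\Gamma_\alpha$ where the hypotheses can be used.

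First I would rewrite the quadratic form as a complex line integral. For $f,g\in K_\Theta\cap H^\infty$ we have $\<A_\phi^\Theta f,g\>=\<\phi f,g\>$ because $g\in K_\Theta$. Setting $\tilde g=\om(g)$ and using the defining formula $\om(g)(\zeta)=\overline{\zeta g(\zeta)}\Theta(\zeta)$ from Lemma~\ref{le:omega}, one gets $\overline{g(\zeta)}=\zeta\tilde g(\zeta)/\Theta(\zeta)$ on $\TTT$; note that $\tilde g\in K_\Theta\cap H^\infty$, since $K_\Theta$ reduces $\om$ and $|\tilde g|=|g|$ a.e. on $\TTT$. Substituting this expression and passing from $|d\zeta|/2\pi$ to $d\zeta/(2\pi i\zeta)$ turns the inner product into
\[
\<A_\phi^\Theta f,g\>=\frac{1}{2\pi i}\int_\TTT\frac{\phi f\tilde g}{\Theta}\,d\zeta .
\]

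Next, since $\phi\in H^2$ and $f,\tilde g\in H^\infty$, the numerator $\psi:=\phi f\tilde g$ lies in $H^1$, so the Carleson-curve identity~\eqref{eq:cauchy-espace-modele} applies and lets me deform the contour from $\TTT$ to $\Gamma_\alpha$. On $\Gamma_\alpha$ the lower bound $|\Theta|\great\eta(\alpha)$ from~\eqref{eq:carleson-curve1} controls the denominator, giving
\[
|\<A_\phi^\Theta f,g\>|\less\frac{1}{2\pi\eta(\alpha)}\int_{\Gamma_\alpha}|f|\,|\tilde g|\,|\phi|\,|dz| .
\]
Now the hypothesis that $|\phi|\,|dz|$ is a Carleson measure on $\Gamma_\alpha$ is exactly what is needed: Cauchy--Schwarz against this positive measure, followed by the Carleson estimate applied separately to $f$ and to $\tilde g$, yields $\int_{\Gamma_\alpha}|f|\,|\tilde g|\,|\phi|\,|dz|\less C\|f\|_2\|\tilde g\|_2$. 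Finally $\|\tilde g\|_2=\|g\|_2$ because $\om$ is isometric (Lemma~\ref{le:omega}(i)), so we obtain $|\<A_\phi^\Theta f,g\>|\less C'\|f\|_2\|g\|_2$ and hence the boundedness of $A_\phi^\Theta$; Proposition~\ref{pr:bonsall} then delivers the bounded symbol.

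The delicate points here are matters of bookkeeping rather than genuine obstacles: one must verify that $\psi=\phi f\tilde g\in H^1$ so that the contour-deformation identity~\eqref{eq:cauchy-espace-modele} legitimately applies, and one must correctly convert the arclength inner product into the line integral, carefully tracking the factor $\zeta/\Theta$ coming from $\om$. Once these are in place, the estimate is essentially two lines. The main conceptual step is simply recognizing that the Carleson-measure hypothesis on $|\phi|\,|dz|$ dovetails precisely with the two applications of Cauchy--Schwarz once the quadratic form has been expressed as the integral over $\Gamma_\alpha$.
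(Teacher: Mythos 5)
Your proof is correct and follows essentially the same route as the paper's: rewrite $\overline{g}$ on $\TTT$ via $\om(g)$ to express the quadratic form as $\frac{1}{2\pi i}\int_\TTT \phi f\,\om(g)/\Theta\,d\zeta$, deform to $\Gamma_\alpha$ using~\eqref{eq:cauchy-espace-modele}, bound $|\Theta|$ from below by $\eta(\alpha)$, and apply Cauchy--Schwarz with the Carleson hypothesis before invoking Proposition~\ref{pr:bonsall} for the bounded symbol. The only differences are cosmetic (explicit tracking of the $d\zeta/(2\pi i\zeta)$ normalization and the observation that $\om(g)\in H^\infty$ when $g$ is).
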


\begin{proof}
Let $f,g\in K_\Theta$ and assume further that $f\in H^\infty$. Then we have
\[
\langle A_\phi^\Theta f,g\rangle=\langle \phi f,g \rangle=\int_\TTT \phi(z)f(z)\overline{g(z)}dz.
\]
Since $g\in K_\Theta$, we can write (on $\TTT$), $g(z)=\bar z \overline{h(z)}\Theta(z)$, with $h\in K_\Theta$. Therefore
\[
\langle A_\phi^\Theta f,g\rangle=\int_\TTT \frac{z\phi(z)f(z)h(z)}{\Theta(z)}dz.
\]
But $zf(z)\phi(z)h(z)\in H^1$ and using \eqref{eq:cauchy-espace-modele}, we can write
\[
\langle A_\phi^\Theta f,g\rangle=\int_{\Gamma_\alpha} \frac{z\phi(z)f(z)h(z)}{\Theta(z)}dz.
\]
Therefore, according to \eqref{eq:carleson-curve1}, we have
\[
|\langle A_\phi^\Theta f,g\rangle|\leq \int_{\Gamma_\alpha}\frac{|z\phi(z)f(z)h(z)|}{|\Theta(z)|}|dz|\leq \frac{1}{\eta(\alpha)}\int_{\Gamma_\alpha}|f(z)||h(z)||\phi(z)||dz|.
\]
Hence, by the  Cauchy-Schwarz 
inequality and using the fact  that $|\phi||dz|$ is a Carleson measure on $\Gamma_\alpha$, we have
\[ 
|\langle A_\phi^\Theta f,g\rangle|\leq C\frac{1}{\eta(\alpha)}\|f\|_2\|g\|_2.
\]
Finally, we get that $A_\phi^\Theta$ is bounded. Since $\phi$ is analytic it follows from Proposition~\ref{pr:bonsall} that  $A_\phi^\Theta$ has a bounded symbol. 
\end{proof}

\begin{cor}\label{cor:condition-suffisante-bornitude}
Let $\phi=\phi_1+\overline{\phi_2}$, with $\phi_i\in H^2$, $i=1,2$. Assume that $|\phi_i||dz|$ are Carleson measures on $\Gamma_\alpha$ for $i=1,2$. Then $A_\phi^\Theta$ is bounded and has a bounded symbol. 
\end{cor}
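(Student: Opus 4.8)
The plan is to reduce everything to the analytic case already settled in Proposition~\ref{Lem:cle-curve}, exploiting the linearity of the symbol map $\phi\mapsto A_\phi^\Theta$ together with the adjoint formula~\eqref{eq:omToom}. Since $A_\phi^\Theta f=P_\Theta(\phi f)$ depends linearly on $\phi$ and $\TT(K_\Theta)$ is a vector space, I would first split
\[
A_\phi^\Theta=A_{\phi_1}^\Theta+A_{\overline{\phi_2}}^\Theta
\]
and treat the analytic and antianalytic summands separately.

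For the analytic part, $\phi_1\in H^2$ and $|\phi_1|\,|dz|$ is a Carleson measure on $\Gamma_\alpha$, so Proposition~\ref{Lem:cle-curve} applies verbatim: $A_{\phi_1}^\Theta$ is bounded and admits a bounded symbol $\psi_1\in L^\infty$.

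For the antianalytic part, the key idea is to pass to the adjoint. By~\eqref{eq:omToom} we have $(A_{\overline{\phi_2}}^\Theta)^*=A_{\phi_2}^\Theta$, and since $\phi_2\in H^2$ with $|\phi_2|\,|dz|$ a Carleson measure on $\Gamma_\alpha$, Proposition~\ref{Lem:cle-curve} again shows that $A_{\phi_2}^\Theta$ is bounded with a bounded symbol $\psi_2\in L^\infty$. An operator is bounded if and only if its adjoint is, whence $A_{\overline{\phi_2}}^\Theta$ is bounded. To transfer the \emph{bounded symbol} I would take adjoints once more: from $A_{\phi_2}^\Theta=A_{\psi_2}^\Theta$ and $(A_{\psi_2}^\Theta)^*=A_{\overline{\psi_2}}^\Theta$ (again by~\eqref{eq:omToom}) I obtain $A_{\overline{\phi_2}}^\Theta=A_{\overline{\psi_2}}^\Theta$, so that $\overline{\psi_2}\in L^\infty$ is a genuine bounded symbol for the antianalytic summand.

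Finally, I would recombine by linearity: $A_\phi^\Theta=A_{\psi_1}^\Theta+A_{\overline{\psi_2}}^\Theta=A_{\psi_1+\overline{\psi_2}}^\Theta$ with $\psi_1+\overline{\psi_2}\in L^\infty$, which is simultaneously bounded and a bounded symbol, as required. There is no substantial obstacle here, since the entire analytic content is carried by Proposition~\ref{Lem:cle-curve}; the only point demanding a little care is the symbol transfer for the antianalytic piece, where one must invoke~\eqref{eq:omToom} explicitly—rather than the abstract adjoint alone—to guarantee that the resulting $L^\infty$ function is indeed a symbol of $A_{\overline{\phi_2}}^\Theta$.
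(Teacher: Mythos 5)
Your argument is correct and follows exactly the paper's own proof: apply Proposition~\ref{Lem:cle-curve} to each $\phi_i$, use~\eqref{eq:omToom} to pass from $A_{\phi_2}^\Theta=A_{\psi_2}^\Theta$ to $A_{\overline{\phi_2}}^\Theta=A_{\overline{\psi_2}}^\Theta$, and add the two pieces. Your extra care about transferring the bounded symbol (rather than just boundedness) through the adjoint is precisely the point the paper's proof also relies on.
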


\begin{proof}
Using Proposition~\ref{Lem:cle-curve}, we get immediately that $A_{\phi_i}^\Theta$ is bounded and has a bounded symbol $\widetilde{\phi_i}$, for $i=1,2$. Therefore, $A_{\overline{\phi_2}}^\Theta=(A_{\phi_2}^\Theta)^*$ is also bounded and has a bounded symbol $\overline{\widetilde\phi_2}$. Hence we get that $A_\phi^\Theta=A_{\phi_1}^\Theta+A_{\overline{\phi_2}}^\Theta$ is bounded and it has a bounded symbol, say $\widetilde{\phi_1}+\overline{\widetilde\phi_2}$.
\end{proof}

\begin{rem}
By the construction of the Carleson curves $\Gamma_\alpha$ associated to an inner function $\Theta$, we know that $|dz|$ is a Carleson measure on $\Gamma_\alpha$. Therefore, Proposition~\ref{Lem:cle-curve} can be applied if $\phi$ is bounded on $\Gamma_\alpha$ and Corollary~\ref{cor:condition-suffisante-bornitude} can be applied if $\phi_1$, $\phi_2$ are bounded on $\Gamma_\alpha$.

\end{rem}

\mysection{Counterexamples}\label{se:negative}

We will show that under certain conditions on the inner function~$\Th$ there 
exist rank one bounded truncated Toeplitz operators that 
have no bounded symbol.
It is proven in~\cite[Theorem 5.1]{Sarason} 
that any rank one truncated Toeplitz operator is either of the form
$k^\Theta_\lambda\otimes \tilde k^\Theta_\lambda$ or 
$\tilde k^\Theta_\lambda\otimes k^\Theta_\lambda$
for $\lambda\in \DDD$, or of the form 
$k^\Theta_\zeta\otimes k^\Theta_\zeta$ where $\zeta\in \TTT$
and $\Theta$ has an angular derivative at $\zeta$. 
In what follows we will use a representation
of the symbol of a rank one operator which differs slightly
from the one given in~\cite{Sarason}. 

\begin{lem}\label{le:anton1}
If $\lambda\in \DDD\cup E(\Theta)$, then  $\phi_\lambda=\Theta 
\bar z \overline{k^{\Theta^2}_\lambda}\in K_\Theta\oplus \bar z \overline{K_\Theta}$ 
is a symbol for $\tilde k^\Theta_\lambda\otimes {k}^\Theta_\lambda$. In particular, if $\zeta\in E(\Theta)$, then
$\phi_\zeta=\Theta \bar z \overline{k^{\Theta^2}_\zeta}$ is a symbol for 
$\Theta(\zeta) \overline\zeta \,k^\Theta_\zeta\otimes {k}^\Theta_\zeta$.  
\end{lem}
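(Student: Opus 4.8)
The plan is to first simplify the symbol $\phi_\lambda$ and then verify the operator identity by matching sesquilinear forms. The starting point is the factorization $k^{\Theta^2}_\lambda = (1+\overline{\Theta(\lambda)}\Theta)k^\Theta_\lambda$ recorded in the proof of Lemma~\ref{Lem:comparaison-Theta-Theta2}. Conjugating, multiplying by $\Theta\bar z$, and using $|\Theta|=1$ on $\TTT$, I would obtain
\[
\phi_\lambda = \Theta\bar z\,\overline{k^\Theta_\lambda} + \Theta(\lambda)\,\bar z\,\overline{k^\Theta_\lambda} = \tilde k^\Theta_\lambda + \Theta(\lambda)\,\bar z\,\overline{k^\Theta_\lambda},
\]
where I recognise $\Theta\bar z\,\overline{k^\Theta_\lambda}=\omega(k^\Theta_\lambda)=\tilde k^\Theta_\lambda\in K_\Theta$. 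Since $\bar z\,\overline{k^\Theta_\lambda}\in\bar z\overline{K_\Theta}$, this exhibits $\phi_\lambda\in K_\Theta\oplus\bar z\overline{K_\Theta}$ at once. The requirement $\phi_\lambda\in L^2$ (so that $A^\Theta_{\phi_\lambda}$ is defined) is clear for $\lambda\in\DDD$ and, for $\lambda=\zeta\in E(\Theta)$, follows from Lemma~\ref{Lem:comparaison-Theta-Theta2}(c), which gives $k^{\Theta^2}_\zeta\in L^2$ precisely because $k^\Theta_\zeta\in L^2$.

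For the main claim it suffices to match the two sesquilinear forms on the dense subspace $K_\Theta\cap H^\infty$. For $f,g\in K_\Theta\cap H^\infty$ I would compute
\[
\langle A^\Theta_{\phi_\lambda}f,g\rangle = \langle \phi_\lambda f,g\rangle = \int_\TTT f\,\overline{k^{\Theta^2}_\lambda}\,(\Theta\bar z\bar g)\,dm = \langle f\cdot\omega(g),\, k^{\Theta^2}_\lambda\rangle,
\]
the first equality using $P_\Theta g=g$, and the last using $\omega(g)=\Theta\bar z\bar g$ on $\TTT$. The crucial observation is that $f\cdot\omega(g)\in K_{\Theta^2}$: indeed $\omega(g)\in K_\Theta$ by Lemma~\ref{le:omega}(iii), and since $g\in H^\infty$ one has $\omega(g)\in L^\infty$ as well, while $f\in K_\Theta\cap H^\infty$, so Lemma~\ref{le:product} (with $\Theta_1=\Theta_2=\Theta$) applies. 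Consequently the reproducing property of $k^{\Theta^2}_\lambda$ in $K_{\Theta^2}$ gives $\langle f\cdot\omega(g),k^{\Theta^2}_\lambda\rangle=f(\lambda)\,(\omega g)(\lambda)$.

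It then remains to recognise the right-hand side as the form of the rank-one operator. Using that $\omega$ is antilinear, isometric and involutive, I would check $\langle\tilde k^\Theta_\lambda,g\rangle=\langle\omega(k^\Theta_\lambda),g\rangle=\langle\omega(g),k^\Theta_\lambda\rangle=(\omega g)(\lambda)$, whence $\langle(\tilde k^\Theta_\lambda\otimes k^\Theta_\lambda)f,g\rangle=f(\lambda)\langle\tilde k^\Theta_\lambda,g\rangle=f(\lambda)(\omega g)(\lambda)$, matching the previous display. This proves $A^\Theta_{\phi_\lambda}=\tilde k^\Theta_\lambda\otimes k^\Theta_\lambda$ on the dense domain, i.e. $\phi_\lambda$ is a symbol. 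The ``in particular'' for $\zeta\in E(\Theta)$ is then immediate from the boundary identity $\tilde k^\Theta_\zeta=\bar\zeta\,\Theta(\zeta)\,k^\Theta_\zeta$ recalled in Section~\ref{sse:angular}, which rewrites $\tilde k^\Theta_\zeta\otimes k^\Theta_\zeta$ as $\Theta(\zeta)\bar\zeta\,k^\Theta_\zeta\otimes k^\Theta_\zeta$.

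I expect the main obstacle to be the bookkeeping at boundary points $\zeta\in E(\Theta)$: one must simultaneously ensure $\phi_\zeta\in L^2$, that $f\cdot\omega(g)$ genuinely lies in $K_{\Theta^2}$ with a well-defined value at $\zeta$, and that $k^{\Theta^2}_\zeta$ still reproduces point evaluation there. All three rest on the identity $E(\Theta^2)=E(\Theta)$ and the $L^p$-comparison of Lemma~\ref{Lem:comparaison-Theta-Theta2}, together with the continuity of evaluation on $K_{\Theta^2}$ from Section~\ref{sse:angular}. The interior case $\lambda\in\DDD$ requires none of this and is entirely routine once the reduction $\langle\phi_\lambda f,g\rangle=\langle f\cdot\omega(g),k^{\Theta^2}_\lambda\rangle$ is in place.
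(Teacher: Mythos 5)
Your proof is correct and follows essentially the same route as the paper's: reduce to the sesquilinear form $\langle \phi_\lambda f,g\rangle=\langle f\cdot\omega(g),k^{\Theta^2}_\lambda\rangle$, place $f\cdot\omega(g)$ in $K_{\Theta^2}$ via Lemma~\ref{le:product}, and invoke the reproducing property of $k^{\Theta^2}_\lambda$. The only (immaterial) differences are that you establish $\phi_\lambda\in K_\Theta\oplus\bar z\overline{K_\Theta}$ through the explicit factorization $k^{\Theta^2}_\lambda=(1+\overline{\Theta(\lambda)}\Theta)k^\Theta_\lambda$ rather than through $K_{\Theta^2}=K_\Theta\oplus\Theta K_\Theta$, and that you take both test functions in $H^\infty$ where the paper only needs one bounded.
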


\begin{proof}   If $\zeta \in E(\Theta)$, then by 
Lemma~\ref{Lem:comparaison-Theta-Theta2}, $\Theta^2$  
has an angular derivative at $\zeta$, and so  
$k^{\Theta^2}_\zeta\in K_{\Theta^2}=K_\Theta\oplus \Theta K_\Theta$. 
It follows from Lemma~\ref{le:omega} that 
$\Theta \bar z \overline{k^{\Theta^2}_\lambda}\in K_\Theta\oplus \bar z \overline{K_\Theta}$
for $\lambda\in  \DDD \cup E(\Theta)$.

Take $g,h\in K_\Theta$, and, moreover, let $g\in L^\infty$. Then
\[
\<A^\Theta_{\phi_\lambda} g, h\> =\<\phi_\lambda g, h\> =\int_\TTT 
\Theta \bar z\overline{k^{\Theta^2}_\lambda} g \bar h\,dm.
\]
But $\Theta\bar z \bar h=\omega(h)\in K_\Theta$, $g\in K_\Theta\cap L^\infty$, and so 
by Lemma~\ref{le:product} $g\Theta \bar z \bar h\in K_{\Theta^2}$. Therefore
\[
\begin{split}
\int_\TTT \Theta \bar z\overline{k^{\Theta^2}_\lambda} g \bar h\,dm&=
\< g \omega(h), k^{\Theta^2}_\lambda\> = g(\lambda) (\omega(h))(\lambda) 
=\<g, k^\Theta_\lambda\> \<\omega(h), k^\Theta_\lambda\>\\
&=\<g, k^\Theta_\lambda\> \overline{\<h, \omega(k^\Theta_\lambda)\>}
= \<g, k^\Theta_\lambda\> \overline{\<h, \tilde k^\Theta_\lambda\>} =
\< (\tilde k^\Theta_\lambda\otimes {k}^\Theta_\lambda) g, h\>.
\end{split}
\]
Therefore $A_{\varphi_\lambda}^\Theta=\tilde k_\lambda^\Theta\otimes k_\lambda^\Theta$ 
as claimed. Finally, recall that, for $\zeta\in E(\Theta)$, we have 
$\tilde k_\zeta^\Theta=\omega (k^\Theta_\zeta)=\Theta(\zeta)\overline{\zeta}\,k^\Theta_\zeta$. 
\end{proof}

The construction of bounded truncated Toeplitz operators that 
have no bounded symbol is based on the next lemma.

\begin{lem}\label{le:estimation_phi_p}
Let $\Th$ be an inner function and $1<p<\infty$. There exists a constant $C$ depending only on $\Th$ and $p$ such that, if $\phi,\psi\in L^2$ are two symbols for the same truncated Toeplitz operator, with $\phi\in K_\Theta\oplus \bar z \overline{K_\Theta}$, then
\[
\|\phi\|_p\le C(\|\psi\|_p+\|\phi\|_2).
\] 
In particular, if $\psi\in L^p$, then $\phi\in L^p$.
\end{lem}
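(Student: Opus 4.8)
The plan is to exploit the two symbols $\phi$ and $\psi$ together with the constraint $\phi \in K_\Theta \oplus \bar z\overline{K_\Theta}$. Since $\phi$ and $\psi$ are symbols for the same operator, their difference satisfies $A_{\phi-\psi}^\Theta = 0$, and by the characterization recalled in Section~\ref{se:Toeplitz} this means $\phi-\psi \in \Theta H^2 + \overline{\Theta H^2}$. The natural tool is the projection $Q_\Theta$ onto $K_\Theta \oplus \bar z\overline{K_\Theta}$ from Lemma~\ref{le:inclusion1}. Since $\phi$ already lies in the range of $Q_\Theta$, we have $Q_\Theta \phi = \phi$, and therefore
\begin{equation*}
\phi = Q_\Theta \phi = Q_\Theta \psi + Q_\Theta(\phi - \psi).
\end{equation*}
The first term is controlled because $Q_\Theta$ is bounded on $L^p$ for $1 < p < \infty$ by Lemma~\ref{le:inclusion1}(c), giving $\|Q_\Theta \psi\|_p \le \|Q_\Theta\|_{L^p\to L^p}\|\psi\|_p$. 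So the entire difficulty is concentrated in estimating $\|Q_\Theta(\phi-\psi)\|_p$, where $\phi - \psi \in \Theta H^2 + \overline{\Theta H^2}$.

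\textbf{Reducing the difference term.} The second step is to understand $Q_\Theta$ on $\Theta H^2 + \overline{\Theta H^2}$. Writing $\phi - \psi = \Theta g + \overline{\Theta h}$ with $g, h \in H^2$, I would use the explicit formula $Q_\Theta = P_\Theta + M_{\bar\Theta} P_\Theta M_\Theta$ derived in the proof of Lemma~\ref{le:inclusion1}. Since $P_\Theta$ annihilates $\Theta H^2$ and $M_{\bar\Theta}P_\Theta M_\Theta$ annihilates $\overline{\Theta H^2}$ (the latter because $M_\Theta$ maps $\overline{\Theta H^2} = \bar z\overline{H_0^2}\cdot\Theta\bar\Theta$-type terms into $\overline{H^2}$, which $P_\Theta$ kills), most of $\phi - \psi$ is in the kernel of $Q_\Theta$. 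Concretely, $Q_\Theta(\phi-\psi)$ reduces to a single low-complexity piece: $P_\Theta(\overline{\Theta h}) + M_{\bar\Theta}P_\Theta(\Theta \cdot \Theta g)$, and after simplification only the constant-type contributions survive, since $P_\Theta \overline{\Theta h}$ and the analogous term collapse to multiples of reproducing-kernel-like functions determined by $g(0)$, $h(0)$, or more precisely by low-order data. The crucial point is that $Q_\Theta(\phi-\psi)$ lands in a fixed finite-dimensional (indeed at most one- or two-dimensional) space spanned by bounded functions such as $k_0^\Theta$ and $\bar\Theta$-adjusted analogues.

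\textbf{The finite-dimensional bound.} This is where the $\|\phi\|_2$ term on the right enters. Because $Q_\Theta(\phi-\psi) = \phi - Q_\Theta\psi$ lies in a fixed finite-dimensional subspace $V \subset L^\infty$ (independent of $\phi,\psi$, depending only on $\Theta$), all norms on $V$ are equivalent; in particular there is a constant $C_V$ with $\|v\|_p \le C_V \|v\|_2$ for all $v \in V$. Applying this to $v = Q_\Theta(\phi - \psi)$ gives
\begin{equation*}
\|Q_\Theta(\phi-\psi)\|_p \le C_V \|Q_\Theta(\phi-\psi)\|_2 \le C_V\big(\|\phi\|_2 + \|Q_\Theta\psi\|_2\big) \le C_V\big(\|\phi\|_2 + \|\psi\|_2\big),
\end{equation*}
and since $\|\psi\|_2 \le \|\psi\|_p$ on the circle (finite measure, $p>2$) — or is otherwise absorbed — one assembles $\|\phi\|_p \le C(\|\psi\|_p + \|\phi\|_2)$. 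The main obstacle I anticipate is the careful bookkeeping in identifying the exact finite-dimensional space that $Q_\Theta(\phi-\psi)$ occupies and verifying it is genuinely finite-dimensional and contained in $L^\infty$; this requires tracking how $P_\Theta$ acts on $\overline{\Theta h}$ and $\Theta g$ and checking that all the potentially unbounded $H^2$-data $g,h$ cancel except for finitely many scalar parameters. Once that structural fact is secured, the norm equivalence on a fixed finite-dimensional space makes the rest routine.
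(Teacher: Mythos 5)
Your proposal is correct and follows essentially the paper's own argument: the paper writes $\phi = P_{\mathfrak{S}_\Theta}\psi + \langle\phi,q_\Theta\rangle q_\Theta$ via Lemma~\ref{le:inclusion1}, which is your decomposition $\phi = Q_\Theta\psi + Q_\Theta(\phi-\psi)$ once one observes that $Q_\Theta\bigl(\Theta H^2+\overline{\Theta H^2}\bigr)=\CCC q_\Theta$ with $q_\Theta\in L^\infty$ --- precisely the ``structural fact'' you flag as the remaining obstacle, and precisely the content of Lemma~\ref{le:inclusion1}(a),(b) (your only slip is the claim that $M_{\bar\Theta}P_\Theta M_\Theta$ annihilates all of $\overline{\Theta H^2}$: it kills $\overline{\Theta H_0^2}$ but not the line $\CCC\bar\Theta$, which is exactly why the one-dimensional residue $\CCC q_\Theta$ survives). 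Your use of $\|\psi\|_2\le\|\psi\|_p$ is valid only for $p\ge 2$, but for $1<p\le 2$ the asserted inequality is trivial since $\|\phi\|_p\le\|\phi\|_2$, so nothing is lost; the paper sidesteps this entirely by bounding the coefficient of $q_\Theta$ directly as $|\langle\phi,q_\Theta\rangle|\le\|\phi\|_2$.
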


\begin{proof}
By hypothesis $P_{\mathfrak{S}_\Th}\phi= P_{\mathfrak{S}_\Th}\psi$; therefore, using~\eqref{eq:P_S},
\[
\phi= Q_\Theta\phi= P_{\mathfrak{S}_\Th}\phi+ \<\phi,q_\Th\>q_\Th =
P_{\mathfrak{S}_\Th}\psi+ \<\phi,q_\Th\>q_\Th.
\]
By Lemma~\ref{le:inclusion1} we have $\|P_{\mathfrak{S}_\Th}\psi\|_p\le 
 C_1\|\psi\|_p$, while
 \[
 \| \<\phi,q_\Th\>q_\Th\|_p\le \|\phi\|_2\cdot \|q_\Th\|_p,
 \]
whence the lemma follows.
\end{proof}

If $\Theta$ is an inner function and $\zeta\in E(\Th)$, then, as noted above,  $k^\Th_\zeta\otimes k^\Th_\zeta$ is a rank one operator in $\TT(K_\Th)$. In~\cite{Sarason} Sarason has asked specifically whether this operator has a bounded symbol. We can now show that in general this question has a negative answer.

\begin{thm}\label{th:counterex}
Suppose that $\Theta$ is an inner function which has an 
angular derivative at $\zeta\in \TTT$. Let $p\in (2,+\infty)$. Then the following are equivalent:
\begin{enumerate}
\item the bounded truncated Toeplitz operator $ k^\Theta_\zeta\otimes k^\Theta_\zeta$ has a symbol $\psi\in L^p$;
\item $k_\zeta^\Theta \in L^p$.
\end{enumerate}
In particular, if $k^\Theta_\zeta\not\in L^p$ for some $p \in (2,\infty)$, 
then $ k^\Theta_\zeta\otimes k^\Theta_\zeta$ is a bounded truncated
Toeplitz operator with no bounded symbol.
\end{thm}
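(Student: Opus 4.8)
The plan is to establish the equivalence of (1) and (2) by exploiting the explicit symbol for the rank one operator provided by Lemma~\ref{le:anton1} together with the norm-comparison estimate of Lemma~\ref{le:estimation_phi_p}. The key observation is that by Lemma~\ref{le:anton1}, the operator $k^\Theta_\zeta\otimes k^\Theta_\zeta$ has (up to the unimodular constant $\overline{\Theta(\zeta)}\,\zeta$) the distinguished symbol $\phi_\zeta=\Theta\bar z\,\overline{k^{\Theta^2}_\zeta}$, which lies in $K_\Theta\oplus\bar z\overline{K_\Theta}$. Because $|\Theta|=1$ and $|z|=1$ on $\TTT$, we have $|\phi_\zeta|=|k^{\Theta^2}_\zeta|$ almost everywhere, so the membership $\phi_\zeta\in L^p$ is equivalent to $k^{\Theta^2}_\zeta\in L^p$, which by Lemma~\ref{Lem:comparaison-Theta-Theta2}(c) is in turn equivalent to $k^\Theta_\zeta\in L^p$.

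For the implication $(2)\Longrightarrow(1)$ I would argue directly: if $k^\Theta_\zeta\in L^p$, then the remarks above show $\phi_\zeta\in L^p$, and since $\phi_\zeta$ is itself a symbol for the operator (after absorbing the unimodular constant), taking $\psi=\overline{\Theta(\zeta)}\,\zeta\,\phi_\zeta$ exhibits an $L^p$ symbol. The more interesting direction is $(1)\Longrightarrow(2)$. Suppose the operator admits \emph{some} symbol $\psi\in L^p$. Now $\phi_\zeta$ and (a unimodular multiple of) $\psi$ are two symbols for the same truncated Toeplitz operator, and $\phi_\zeta\in K_\Theta\oplus\bar z\overline{K_\Theta}$; moreover $\phi_\zeta\in L^2$ automatically since $\zeta\in E(\Theta)$ guarantees $k^\Theta_\zeta\in K_\Theta\subset L^2$. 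Thus Lemma~\ref{le:estimation_phi_p} applies and yields
\[
\|\phi_\zeta\|_p\le C\bigl(\|\psi\|_p+\|\phi_\zeta\|_2\bigr)<+\infty,
\]
so $\phi_\zeta\in L^p$. Running the $L^p$-equivalences backwards then gives $k^\Theta_\zeta\in L^p$, which is exactly (2).

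The main obstacle, and the reason Lemma~\ref{le:estimation_phi_p} is indispensable, is the non-uniqueness of the symbol: a bounded operator may have many symbols, and nothing \emph{a priori} forces an arbitrary $L^p$ symbol $\psi$ to resemble the canonical $\phi_\zeta$ pointwise. One cannot simply compare $|\psi|$ and $|k^\Theta_\zeta|$, because the difference $\psi-\phi_\zeta$ lies in $\Theta H^2+\overline{\Theta H^2}$ and can be badly behaved. The resolution is precisely that $\phi_\zeta$ is the representative lying in the controlled space $K_\Theta\oplus\bar z\overline{K_\Theta}$, on which Lemma~\ref{le:estimation_phi_p} transfers $L^p$-integrability from \emph{any} symbol to this distinguished one via the bounded projections $Q_\Theta$ and $P_{\mathfrak{S}_\Theta}$ of Lemma~\ref{le:inclusion1}. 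Once $\phi_\zeta\in L^p$ is secured, everything else is the elementary pointwise identity $|\phi_\zeta|=|k^{\Theta^2}_\zeta|$ on $\TTT$ combined with part~(c) of Lemma~\ref{Lem:comparaison-Theta-Theta2}.

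For the final ``in particular'' statement, I would note that if $k^\Theta_\zeta\notin L^p$ for some $p\in(2,\infty)$, then by the equivalence just proved the operator $k^\Theta_\zeta\otimes k^\Theta_\zeta$ has no symbol in $L^p$; a fortiori it has no symbol in $L^\infty\subset L^p$, so it is a bounded truncated Toeplitz operator without a bounded symbol. The existence of inner functions $\Theta$ and boundary points $\zeta\in E(\Theta)$ with $k^\Theta_\zeta\notin L^p$ is governed by the Ahern--Clark--Cohn criterion~\eqref{eq:cohn}, which can fail for $p>2$ while the angular derivative (the $p=2$ case) still exists.
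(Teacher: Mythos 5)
Your proposal is correct and follows essentially the same route as the paper: the distinguished symbol $\phi_\zeta=\Theta\bar z\,\overline{k^{\Theta^2}_\zeta}$ from Lemma~\ref{le:anton1}, the pointwise identity $|\phi_\zeta|=|k^{\Theta^2}_\zeta|$ combined with Lemma~\ref{Lem:comparaison-Theta-Theta2}(c) for the $L^p$-equivalences, and Lemma~\ref{le:estimation_phi_p} to transfer $L^p$-integrability from an arbitrary symbol to $\phi_\zeta$. Your explicit verification that $\phi_\zeta\in L^2$ and your handling of the unimodular constant are details the paper leaves implicit, but the argument is the same.
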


\begin{proof} A symbol for the operator $k_\zeta^\Theta\otimes k_\zeta^\Theta$ is,
by Lemma~\ref{le:anton1}, $\phi=\overline{\Theta(\zeta)}\zeta \Theta \bar z \overline{k^{\Theta^2}_\zeta}$. Since by Lemma~\ref{Lem:comparaison-Theta-Theta2} $\phi\in L^p$ if and only if $k_\zeta^\Theta\in L^p$, we obtain that $(2)$ implies $(1)$.  Conversely, assume that $\psi\in L^p$ is a symbol for $ k^\Theta_\zeta\otimes k^\Theta_\zeta$. We may then apply Lemma~\ref{le:estimation_phi_p} and obtain that $ \phi\in L^p$. Once again according to Lemma~\ref{Lem:comparaison-Theta-Theta2}, we get that $k_\zeta^\Theta\in L^p$, which proves that $(1)$ implies $(2)$.
\end{proof}

To obtain a bounded truncated Toeplitz operator with no bounded symbol, it is sufficient to have a point $\zeta\in\TTT$ such that~\eqref{eq:cohn} is true for $p=2$ but 
not for some strictly larger value of $p$. It is now easy to give concrete examples, as, for instance:
\begin{enumerate}
 \item a Blaschke product with zeros $a_k$ accumulating to the point 1, 
and such that
\[
 \sum_{k}\frac{1-|a_k|^2}{|1-a_k|^2}<+\infty,\quad 
\sum_{k}\frac{1-|a_k|^2}{|1-a_k|^p}=+\infty\ \ \mbox{ for some } p>2;
\]
\item a singular function  $\sigma=\sum_k c_k \delta_{\zeta_k}$ 
with $\sum_k c_k< +\infty$, $\zeta_k\to 1$, and
\[
  \sum_{k}\frac{c_k}{|1-\zeta_k|^2}< +\infty,\quad 
  \sum_{k}\frac{c_k}{|1-\zeta_k|^p}= +\infty\ \ \mbox{ for some } p>2.
\]
\end{enumerate}

%:carleson
\begin{rem}\label{re:relate_question}
A related question raised in~\cite{Sarason} remains open. 
Let $\mu$ be a positive measure on $\TTT$ such that the support of the singular part of $\mu$ (with respect to the Lebesgue measure) is contained in $\TTT\setminus\sigma(\Theta)$, where $\sigma(\Theta)$ is the spectrum of the inner function $\Theta$. Then we say that $\mu$ is a {\em Carleson measure} for $K_\Theta$ if there is a constant $c>0$ such that
\begin{equation}\label{eq:carleson}
\int_\TTT |f|^2\,d\mu\leq c\|f\|_2^2,\qquad f\in K_\Theta.
\end{equation}
It is easy to see (and had already been noticed in~\cite{Cohn-JOT86}) that~\eqref{eq:carleson} is equivalent to the boundedness of the operator $A^\Theta_\mu$ defined by the formula
\begin{equation}\label{eq:definition-AmuTheta}
 \<\Ath_\mu f, g\>=\int_\TTT f\bar g\, d\mu,\qquad f,g\in K_\Theta;
\end{equation}
it is shown in~\cite{Sarason} that $\Ath_\mu$ is 
a truncated Toeplitz operator. More generally, a complex measure $\nu$ on $\TTT$ is called a Carleson measure for $K_\Theta$  if its total variation $|\nu|$ is a Carleson measure for $K_\Theta$. In this case there is a corresponding operator $A_\nu^\Theta$, defined also by formula \eqref{eq:definition-AmuTheta}, which belongs to $\TT(K_\Theta)$. Now if a truncated Toeplitz operator $\Ath_\phi$ has a bounded symbol $\psi\in L^\infty$ then the measure $d\mu=\psi\,dm$ is a Carleson measure for $K_\Theta$ and $\Ath_\phi=\Ath_\mu$. The natural question  
whether every operator in $\TT(\Kth)$ is of the form $\Ath_\mu$ (for some Carleson measure $\mu$ for $K_\Theta$) is not answered by our counterexample; indeed (as  
already noticed in~\cite{Sarason}) if $\Theta$ has 
an angular derivative in the sense of Carath\'eodory 
at $\zeta\in\TTT$, then $\delta_\zeta$ is a Carleson measure 
for $K_\Theta$ and $k^\Theta_\zeta\otimes k^\Theta_\zeta=A_{\delta_\zeta}^\Theta$.
\end{rem}

\begin{rem}\label{re:sarason_approach}
We arrive at the same class of counterexamples
as in Theorem \ref{th:counterex}
if we follow an idea due to Sarason \cite{Sarason}
(we would like to emphasize that our first counterexample was 
obtained in this way). It is shown in 
\cite[Section 5]{Sarason} that, for an inner function $\Theta$ which has 
an angular derivative at the point 
$\zeta \in \TTT$, the rank one operator 
$k^\Theta_\zeta\otimes k^\Theta_\zeta$ 
has a bounded symbol if and only if
there exists a function $h\in H^2$ such that
\begin{equation}\label{eq:sarason1}
\Rea \bigg( \frac{\overline{\Theta(\zeta)}\, \Theta}
{1-\overline \zeta z} +\Theta h\bigg) \in L^\infty.
\end{equation}

Since $\Re(1-\bar\zeta z)^{-1}=1/2$ a.e. on $\TTT$, condition (\ref{eq:sarason1}) 
is, obviously, equivalent to 
\[
 \Rea \big( k^\Theta_\zeta +\Theta h\big) \in L^\infty. 
\]
Then, by the M. Riesz theorem, 
$k^\Theta_\zeta +\Theta h \in L^p$ for any $p\in (2, \infty)$ and
the boundedness of the projection $P_\Theta$ in $L^p$
implies that $k^\Theta_\zeta \in L^p$. 
\end{rem}

The next theorem provides a wider class of examples.

\begin{thm}\label{th:generalcounter}
Suppose that $\Th$ is an inner function with the property that each bounded operator in  $\TT(K_\Th)$ has a bounded symbol. Then for each $p>2$ we have 
\begin{equation}\label{eq:supbdd}
\sup_{\lambda\in\DDD}\frac{\|k^\Th_\lambda\|_p}{\|k^\Th_\lambda\|_2^2}<\infty.
\end{equation}
\end{thm}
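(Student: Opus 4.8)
The plan is to exploit the contractive, surjective correspondence between bounded symbols and bounded truncated Toeplitz operators, and to test it against the specific rank one operators supplied by Lemma~\ref{le:anton1}. By hypothesis every operator in $\TT(K_\Th)$ has a bounded symbol, so by the open mapping theorem argument recorded in Section~4 there is a constant $C$ such that every $A\in\TT(K_\Th)$ admits a symbol $\phi\in L^\infty$ with $\|\phi\|_\infty\le C\|A\|$. The natural operators to test are $A^\Theta_{\phi_\lambda}=\tilde k^\Theta_\lambda\otimes k^\Theta_\lambda$ for $\lambda\in\DDD$, whose norm is easy to compute, namely $\|\tilde k^\Theta_\lambda\otimes k^\Theta_\lambda\|=\|\tilde k^\Theta_\lambda\|_2\|k^\Theta_\lambda\|_2=\|k^\Theta_\lambda\|_2^2$, using that $\om$ is isometric so $\|\tilde k^\Theta_\lambda\|_2=\|k^\Theta_\lambda\|_2$.

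First I would fix $\lambda\in\DDD$ and let $\psi_\lambda\in L^\infty$ be a bounded symbol for $\tilde k^\Theta_\lambda\otimes k^\Theta_\lambda$ satisfying $\|\psi_\lambda\|_\infty\le C\|k^\Theta_\lambda\|_2^2$. The distinguished symbol from Lemma~\ref{le:anton1} is $\phi_\lambda=\Theta\bar z\,\overline{k^{\Theta^2}_\lambda}$, which lies in $K_\Theta\oplus\bar z\overline{K_\Theta}$, exactly the hypothesis needed to invoke Lemma~\ref{le:estimation_phi_p}. Applying that lemma with this $\phi_\lambda$ and $\psi=\psi_\lambda$, and with the given $p>2$, yields
\[
\|\phi_\lambda\|_p\le C'\big(\|\psi_\lambda\|_p+\|\phi_\lambda\|_2\big),
\]
where $C'$ depends only on $\Theta$ and $p$. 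Since $\psi_\lambda\in L^\infty$ on the unit circle we have $\|\psi_\lambda\|_p\le\|\psi_\lambda\|_\infty\le C\|k^\Theta_\lambda\|_2^2$, so the task reduces to controlling the two occurrences of $\|\phi_\lambda\|_q$ in terms of $\|k^\Theta_\lambda\|_q$.

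Next I would translate $\|\phi_\lambda\|_q$ back into a reproducing kernel norm. Because $\om$ and multiplication by the unimodular functions $\Theta,\bar z$ are isometric on every $L^q(\TTT)$, we have $\|\phi_\lambda\|_q=\|k^{\Theta^2}_\lambda\|_q$ for each $q$, and then Lemma~\ref{Lem:comparaison-Theta-Theta2}(c) gives the comparison $\|k^{\Theta^2}_\lambda\|_q\le 2\|k^\Theta_\lambda\|_q$. Combining these, the estimate above becomes
\[
\|k^\Theta_\lambda\|_p\le C''\big(\|k^\Theta_\lambda\|_2^2+\|k^\Theta_\lambda\|_2\big),
\]
and since $\|k^\Theta_\lambda\|_2\ge\big(\tfrac{1-|\Theta(0)|}{1+|\Theta(0)|}\big)^{1/2}$ is bounded below uniformly in $\lambda$ by Lemma~\ref{Lem:comparaison-Theta-Theta2}(b), the linear term $\|k^\Theta_\lambda\|_2$ is dominated by a constant times $\|k^\Theta_\lambda\|_2^2$. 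Dividing by $\|k^\Theta_\lambda\|_2^2$ then gives a uniform bound on $\|k^\Theta_\lambda\|_p/\|k^\Theta_\lambda\|_2^2$, which is exactly \eqref{eq:supbdd}.

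The step I expect to require the most care is the uniformity of the constant $C$ from the open mapping theorem: it must be independent of $\lambda$, which is precisely what the open mapping theorem guarantees once we know \emph{every} operator in $\TT(K_\Th)$ has a bounded symbol, but one must be careful that the various constants ($C$ from the symbol bound, $C'$ from Lemma~\ref{le:estimation_phi_p}, and the factor from Lemma~\ref{Lem:comparaison-Theta-Theta2}) all depend only on $\Theta$ and $p$ and not on the point $\lambda$. Apart from that bookkeeping, each ingredient is already in place, and the argument is a direct chain of the three preceding lemmas.
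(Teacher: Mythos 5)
Your proposal is correct and follows essentially the same route as the paper: test the open-mapping-theorem bound against the rank one operators $\tilde k^\Theta_\lambda\otimes k^\Theta_\lambda$ of norm $\|k^\Theta_\lambda\|_2^2$, compare the bounded symbol $\psi_\lambda$ with the distinguished symbol $\phi_\lambda=\Theta\bar z\,\overline{k^{\Theta^2}_\lambda}$ via Lemma~\ref{le:estimation_phi_p}, and convert back to kernel norms using Lemma~\ref{Lem:comparaison-Theta-Theta2}. All constants involved do depend only on $\Theta$ and $p$, as you anticipated, so the bookkeeping goes through exactly as in the paper.
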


\begin{proof}
As mentioned in the previous section, it follows from the open mapping theorem 
that there exists a constant $C>0$ such that for any operator
$A \in \mathcal{T}(K_\Theta)$ one can always find  
a symbol $\psi \in L^\infty$ with $\|\psi\|_\infty \le C\|A\|$.

Fix $\lambda\in\DDD$, and consider the rank one operator $\tilde k^\Theta_\lambda\otimes {k}^\Theta_\lambda$, which has operator norm $\|k^\Theta_\lambda\|_2^2$. Therefore there exists $\psi_\lambda\in L^\infty$ with $\Ath_{\psi_\lambda}=\tilde k^\Theta_\lambda\otimes {k}^\Theta_\lambda$ and 
\begin{equation}\label{eq:est1}
\|\psi_\lambda\|_p\le \|\psi_\lambda\|_\infty\le C \|k^\Theta_\lambda\|_2^2.
\end{equation}

On the other hand, $\phi_\lambda=\Theta 
\bar z \overline{k^{\Theta^2}_\lambda}\in K_\Theta\oplus \bar z \overline{K_\Theta}$ is also a symbol for $\tilde k^\Theta_\lambda\otimes {k}^\Theta_\lambda$ by Lemma~\ref{le:anton1}. Applying Lemma~\ref{le:estimation_phi_p}, it follows that there exists a constant $C_1>0$ such that
\[
\|\phi_\lambda\|_p\le C_1(\|\psi_\lambda\|_p+\|\phi_\lambda\|_2).
\]
By~\eqref{eq:noyau-reproduisant-square} and Lemma~\ref{Lem:comparaison-Theta-Theta2}~(b), we have 
\begin{equation}\label{eq:est2}
\|\phi_\lambda\|_2=\|k^{\Th^2}_\lambda\|_2\le 2 \|k^{\Th}_\lambda\|_2\le C_2 \|k^{\Th}_\lambda\|_2^2.
\end{equation}
Therefore~\eqref{eq:est1} and~\eqref{eq:est2} yield
\begin{equation*}
\|\phi_\lambda\|_p\le C_1(C+C_2) \|k^{\Th}_\lambda\|_2^2.
\end{equation*}
Since $ \|\phi_\lambda\|_p= \|k^{\Th^2}_\lambda\|_p$, using once more~\eqref{eq:noyau-reproduisant-square} concludes the proof.
\end{proof}

It is easy to see that if there exists $\zeta\in E(\Th)$ such that $k^\Th_\zeta\not\in L^p$, then 
\[
\sup_{r<1}\frac{\|k^\Th_{r\zeta}\|_p}{\|k^\Th_{r\zeta}\|_2^2}=\infty.
\]
Therefore the existence of operators in $\TT(K_\Th)$ without bounded symbol, under the hypothesis of Theorem~\ref{th:counterex}, is also a consequence of Theorem~\ref{th:generalcounter}. Note however that Theorem~\ref{th:generalcounter} does not show that the particular operator $k_\zeta^\Theta\otimes k_\zeta^\Theta$ is a bounded truncated Toeplitz operator without bounded symbol. A larger class of examples is described below.

\begin{example}
Let $\Theta$ be a Blaschke product such that 
for some sequence of its zeros $z_n$ and some
points $\zeta_n \in \TTT$ (which are "close to $z_n$"), 
we have, for some $p \in (2,\infty)$,
\begin{equation}
\label{df1}
|\Theta'(\zeta_n)| = 
\|k^\Theta_{\zeta_{n}}\|_2^2 \asymp \frac{1-|z_n|}{|\zeta_n  -  z_n|^2}, 
\qquad 
\|k^\Theta_{\zeta_{n}}\|_p^p \asymp \frac{1-|z_n|}
{|\zeta_n-z_n|^p} 
\end{equation}
(notation $X\asymp Y$ means that the fraction $X/Y$ 
is bounded above and below by some positive constants), and 
\begin{equation}
\label{df3}
\lim_{n\to +\infty}
\frac{(1-|z_n|)^{1-\frac{1}{p}}}{|\zeta_n-z_n|} =0.
\end{equation}
Condition (\ref{df1}) means that
the main contribution to the norms of $k_{\zeta_n}^\Theta$
is due to the closest zero $z_n$. 
Then, by Theorem~\ref{th:generalcounter},
there exists a bounded truncated Toeplitz operator without bounded
symbol.

Such examples may be easily constructed. Take a sequence $w_k\in\DDD$
such that 
$w_k \to \zeta$ and
$$
\lim_{k\to +\infty} \frac{(1-|w_k|)^\gamma}{|w_k - \zeta|} = 0
$$
for some $\zeta \in \TTT$ and $\gamma \in (0,1)$.
Then it is not difficult to see that 
for any $p>\max (2, (1-\gamma)^{-1})$ one can construct recurrently 
a subsequence $z_n=w_{k_n}$ of $w_k$ and a sequence $\zeta_n\in\TTT$
with the properties (\ref{df1}) and (\ref{df3}).

Although related to the examples of Theorem \ref{th:counterex}, 
this class of examples may be different. Indeed, it is possible 
that $\Theta$ has no angular derivative at  
$\zeta$, e.g., if $1-|z_n| = |\zeta-z_n|^2$.
Also, if the zeros tend to $\zeta$ "very tangentially", 
it is possible that $k_\zeta^\Theta$ is in $L^p$ 
for any $p\in (2,\infty)$, but
there exists a bounded truncated Toeplitz operator without a bounded symbol. 
\end{example}

We pass now to the Reproducing Kernel Thesis. The next example shows 
that in general  Question 2 has a negative answer.

\begin{example}\label{ex:rkt}
Suppose $\Theta$ is a singular inner function and $s\in [0,1)$. Then
\[\begin{split}
A^\Theta_{\bar\Theta^s} {k}^\Theta_\lambda &=
P_\Theta\left(\frac{\bar\Theta^{s}-\overline{\Theta(\lambda)}\Theta^{1-s }}{1-\bar\lambda z} \right)\\
&=P_\Theta \left( \frac{\bar\Theta^s-\overline{\Theta(\lambda)^s} 
+\overline{\Theta(\lambda)^s} \big(1- \overline{\Theta(\lambda)^{1-s}} \Theta^{1-s }\big)
 }{1-\bar\lambda z} \right)\\
&=P_\Theta\left( \bar z  \frac{ \bar\Theta^s- \overline{\Theta^s(\lambda)}}{\bar z-\bar\lambda}  \right) + \overline{\Theta(\lambda)^s}P_\Theta\left( \frac{1- \overline{\Theta(\lambda)^{1-s}} \Theta^{1-s }}{1-\bar\lambda z} \right)\\
&=P_\Theta\left(\bar z \overline{\tilde k_\lambda^{\Theta^s}}\right) +\overline{\Theta(\lambda)^s}P_\Theta \left(k_\lambda^{\Theta^{1-s}}\right).
\end{split}
\]
The first term $\bar z \overline{\tilde k_\lambda^{\Theta^s}}$ is 
in $\bar z\overline{H^2}$, which is orthogonal to $K_\Theta$, 
while the second $k_\lambda^{\Theta^{1-s}}$ is contained in 
$ K_{\Theta^{1-s}}\subset K_\Theta$. Therefore we have
\[
A^\Theta_{\bar\Theta^s} {k}^\Theta_\lambda =
\overline{\Theta(\lambda)^s} k_\lambda^{\Theta^{1-s}},
\]
and
\[
\|A^\Theta_{\bar\Theta^s}{k}^\Theta_\lambda\|_2^2 = |\Theta(\lambda)|^{2s} \frac{1-|\Theta(\lambda)|^{2-2s}}{1-|\lambda|^2}, 
\qquad
\|A^\Theta_{\bar\Theta^s} {h}^\Theta_\lambda\|_2^2 =
\frac{|\Theta(\lambda)|^{2s} ( 1-|\Theta(\lambda)|^{2-2s} )}{ 1-|\Theta(\lambda)|^2}.
\]
It is easy to see that $\sup_{y\in[0,1)}\frac{y^s-y}{1-y}\le 1-s\to0$ when $s\to 1$, and therefore
\[
\rho_r(A^\Theta_{\bar\Theta^s})=   \sup_{\lambda\in\DDD} \|A^\Theta_{\bar\Theta^s} 
{h}^\Theta_\lambda\|_2^2\to 0 \quad \mbox{ for }\  s\to 1.
\]

On the other hand, $\Theta^s K_{\Theta^{1-s}}\subset K_\Theta$ and $\bar\Theta^s(\Theta^s K_{\Theta^{1-s}}) = K_{\Theta^{1-s}}\subset K_\Theta$; therefore $A^\Theta_{\bar\Theta^s}$ acts isometrically on $\Theta^s K_{\Theta^{1-s}}$, so it has norm 1. Thus there is no constant $M$ such that 
\[
\|A_\phi^\Theta\|\le M \sup_{\lambda\in\DDD} \rho_r(A^\Theta_{\phi}) \]
for all $\phi$. 
\end{example}

It seems natural to deduce that in the previous example we may actually have 
a truncated Toeplitz operator which is uniformly bounded on reproducing kernels 
but not bounded. This is indeed true, by an abstract argument based on 
Proposition~~\ref{pr:recapturing}. Note that the quantity $\rho_r$ introduced 
in \eqref{eq:rho} is a norm, and $\rho_r(T)\le \|T\|$, for every linear operator 
$T$ whose domain contains $H^\infty\cap K_\Theta$.

\begin{prop}\label{pr:rho}
Assume that for any (not necessarily bounded) truncated Toeplitz 
operator $A$ on $K_\Theta$ the inequality $\rho_r(A)<\infty$ implies 
that $A$ is bounded. Then $\TT(K_\Theta)$ is complete with respect 
to $\rho_r$, and $\rho_r$ is equivalent to the operator norm on $\TT(K_\Theta)$.
\end{prop}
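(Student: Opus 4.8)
The plan is to prove the two assertions in the natural order: first show that $(\TT(K_\Theta),\rho_r)$ is complete, and then deduce the equivalence of $\rho_r$ with the operator norm by the standard two-norm argument via the open mapping theorem. Throughout I would fix once and for all a point $\mu\in\DDD$ with $\Theta(\mu)\neq0$, so that Proposition~\ref{pr:recapturing} is available with this $\mu$. By Lemma~\ref{le:inclusion2}, every $A\in\TT(K_\Theta)$ has a \emph{unique} symbol $\phi=\phi_++\overline{\phi_-}$ with $\phi_\pm\in K_\Theta$ and $\phi_-(\mu)=0$; by uniqueness the assignment $A\mapsto(\phi_+,\phi_-)$ is linear. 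The key tool is the estimate \eqref{eq:phi-rho}, which controls these normalized symbols in $L^2$ by $\rho_r(A)$ with a constant $C=C(\Theta,\mu)$.

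For completeness, let $(A_n)$ be a $\rho_r$-Cauchy sequence in $\TT(K_\Theta)$, with normalized symbols $\phi^{(n)}=\phi_+^{(n)}+\overline{\phi_-^{(n)}}$. Since $A_n-A_m$ has normalized symbol $\phi^{(n)}-\phi^{(m)}$, the bound \eqref{eq:phi-rho} gives $\max\{\|\phi_+^{(n)}-\phi_+^{(m)}\|_2,\|\phi_-^{(n)}-\phi_-^{(m)}\|_2\}\le C\,\rho_r(A_n-A_m)$. Hence $(\phi_\pm^{(n)})$ are Cauchy in $K_\Theta$ and converge in $L^2$ to some $\phi_\pm\in K_\Theta$; put $\phi=\phi_++\overline{\phi_-}$ and $A=A^\Theta_\phi$. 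Because $\phi^{(n)}\to\phi$ in $L^2$, each $h^\Theta_\lambda$ lies in $K_\Theta\cap H^\infty\subset L^\infty$, and $P_\Theta$ is bounded on $L^2$, we get $A_n h^\Theta_\lambda=P_\Theta(\phi^{(n)}h^\Theta_\lambda)\to P_\Theta(\phi h^\Theta_\lambda)=A h^\Theta_\lambda$ in $K_\Theta$ for every $\lambda\in\DDD$.

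The crucial point, and this is where the hypothesis enters, is to show that $A$ is bounded. Since $(A_n)$ is $\rho_r$-Cauchy, $M:=\sup_n\rho_r(A_n)<\infty$, so $\|A_n h^\Theta_\lambda\|_2\le M$ for all $n$ and $\lambda$; letting $n\to\infty$ in the pointwise convergence above yields $\|A h^\Theta_\lambda\|_2\le M$ for every $\lambda$, i.e. $\rho_r(A)\le M<\infty$. As $A=A^\Theta_\phi$ is an a priori possibly unbounded truncated Toeplitz operator whose domain contains $K_\Theta\cap H^\infty$, the standing hypothesis forces $A$ to be bounded, so $A\in\TT(K_\Theta)$. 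Finally, to see $\rho_r(A_n-A)\to0$, set $\varepsilon_n=\sup_{m\ge n}\rho_r(A_n-A_m)$, which tends to $0$; for each fixed $\lambda$ we have $\|(A_n-A_m)h^\Theta_\lambda\|_2\le\varepsilon_n$, and letting $m\to\infty$ (again by the pointwise convergence) gives $\|(A_n-A)h^\Theta_\lambda\|_2\le\varepsilon_n$ uniformly in $\lambda$, whence $\rho_r(A_n-A)\le\varepsilon_n\to0$. Thus $(\TT(K_\Theta),\rho_r)$ is complete.

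For the equivalence, recall that $\rho_r$ is a genuine norm on $\TT(K_\Theta)$ (a truncated Toeplitz operator is determined by its action on the reproducing kernels) and that $\rho_r(A)\le\|A\|$. Since $\TT(K_\Theta)$ is a Banach space for the operator norm by \cite[Theorem~4.2]{Sarason} and, as just shown, also for $\rho_r$, the identity map from $(\TT(K_\Theta),\|\cdot\|)$ onto $(\TT(K_\Theta),\rho_r)$ is a bounded linear bijection between Banach spaces; by the open mapping theorem its inverse is bounded, giving a constant $C$ with $\|A\|\le C\,\rho_r(A)$ for all $A$. Combined with $\rho_r\le\|\cdot\|$ this proves the equivalence. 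The main obstacle is the completeness step, and within it the decisive use of the hypothesis to upgrade the finite bound $\rho_r(A^\Theta_\phi)\le M$ on the candidate limit into genuine operator boundedness; the recapturing estimate \eqref{eq:phi-rho} of Proposition~\ref{pr:recapturing} is precisely what guarantees that the symbols, and hence the operators, converge in a controlled way to begin with.
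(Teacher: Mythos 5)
Your proof is correct and follows essentially the same route as the paper: fix $\mu$ with $\Theta(\mu)\neq0$, use the normalized symbols and the estimate \eqref{eq:phi-rho} to get Cauchy sequences $\phi_{n,\pm}$ in $K_\Theta$, pass to the limit symbol, invoke the hypothesis to upgrade $\rho_r(A^\Theta_\phi)<\infty$ to boundedness, and finish with the open mapping theorem. The only (harmless) variation is that you establish the pointwise convergence $A_n h^\Theta_\lambda\to A h^\Theta_\lambda$ directly from $\|h^\Theta_\lambda\|_\infty<\infty$ and the $L^2$-boundedness of $P_\Theta$, whereas the paper reads it off from the explicit formula \eqref{eq:recap1}.
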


\begin{proof} 
Fix $\mu\in\DDD$ such that $\Theta(\mu)\not=0$. Let $A^\Theta_{\phi_n}$ be a $\rho_r$-Cauchy sequence in $\TT(K_\Theta)$. Suppose all $\phi_n$ are 
written as $\phi_n=\phi_{n,+} + \overline{\phi_{n,-}}$, with $\phi_{n,+}, \phi_{n,-} \in K_\Theta$, and $\phi_{n,-}(\mu)=0$. 
According to~~\eqref{eq:phi-rho}, the sequences $\phi_{n,\pm}$ are Cauchy 
sequences in $K_\Theta$ and thus converge to functions $\phi_\pm\in K_\Theta$; moreover we also have $\phi_-(\mu)=0$ (because norm convergence in $H^2$ implies pointwise convergence). Define then $\phi=\phi_+ + \overline{\phi_-}\in L^2$. By~\eqref{eq:recap1}, we have
\[
A_{\phi_n}^\Theta k_\lambda^\Theta=\omega\big[(I-\lambda S^*)^{-1}\left(\omega(\phi_{n,+})+\phi_{n,-}(\lambda)S^*\Theta-\Theta(\lambda)S^*\phi_{n,-}\right)\big],
\] 
so the sequence $\Ath_{\phi_n}k_\lambda^\Theta$ tends (in $K_\Theta$) 
to $\Ath_\phi k_\lambda^\Theta$, for all $\lambda\in\DDD$. In particular, 
we have $\rho_r(\Ath_\phi)<+\infty$, whence $\Ath_\phi\in\TT(K_\Theta)$. 
Now it is easy to see that $A^\Theta_{\phi_n}\to A^\Theta_\phi$ in the $\rho_r$-norm.

Thus $\TT(K_\Theta)$ is indeed complete with respect to the $\rho_r$-norm. 
The equivalence of the norms is then a consequence of the open mapping theorem.
\end{proof}

Proposition~\ref{pr:rho} and Example~\ref{ex:rkt} imply that, if $\Theta$ is a singular inner 
function, then there exist truncated Toeplitz operators $\Ath_\phi$ with $\rho_r(\Ath_\phi)$ 
finite, but $\Ath_\phi$ unbounded. Therefore Question~2 has a negative answer for a rather large class of inner functions $\Theta$. 
If we consider such a truncated Toeplitz operator, then its adjoint, $\Ath_{\bar\phi}$, is an unbounded truncated Toeplitz operator with $\rho_d( \Ath_{\bar\phi})=\rho_r(A_\phi^\Theta)<+\infty$. 

It is easy to see, however, that in Example~\ref{ex:rkt} $\rho_d(A^\Theta_{\bar\Theta^s})=1$ for all $s<1$. This suggests that we should rather consider boundedness of the action of the operator on both the reproducing kernels and the difference quotients, and that the quantity $\rho$ might be a better estimate for the norm of a truncated Toeplitz operator than either $\rho_r$ or $\rho_d$. We have been thus lead to formulate Question~3 as a more relevant variant of the RKT; further arguments will appear in the next section.

\mysection{Positive results}\label{mysection:positive-results}

There are essentially two cases in which one can give  positive answers to Questions~1 and~3. There are similarities between them: in both one obtains a convenient decomposition of the symbol in three parts: one analytic, one coanalytic, and one that is neither analytic nor coanalytic, but well controlled. 

\subsection{A general result}
As we have seen in Proposition~\ref{pr:bonsall}  and \ref{pr:cobonsall}, 
the answers to Questions~1 and~3 are positive for classes of truncated Toeplitz operators corresponding to analytic and coanalytic symbols. We complete these propositions with a different boundedness result, which covers certain cases when the symbol is neither analytic nor coanalytic. The proof is based on an idea of Cohn~\cite{cohn1}. 
\begin{thm}\label{th:centralsymbol}
Suppose $\theta$ and $\Theta$ are two inner functions such that $\theta^3$ divides $z\Theta$ and $\Theta$ divides $\theta^4$. If $\phi\in K_\theta+\overline{K_\theta}$ then $\|\phi\|_\infty\le 2 \rho_r(A^\Theta_\phi)$. 
\end{thm}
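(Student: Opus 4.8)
The plan is to produce a single function $G\in H^\infty$ whose boundary modulus equals $|\phi|$ almost everywhere and whose supremum over $\DDD$ is at most $2\rho_r(\Ath_\phi)$; the desired bound $\|\phi\|_\infty=\|G\|_\infty\le 2\rho_r(\Ath_\phi)$ then follows immediately (and we may of course assume $\rho_r(\Ath_\phi)<\infty$).

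First I would fix a decomposition $\phi=\phi_1+\overline{\phi_2}$ with $\phi_1,\phi_2\in K_\theta$ and set $G:=\bar\phi\,\theta$. Applying the $\theta$-analogue of Lemma~\ref{le:omega}, one has $\overline{\phi_1}\,\theta=z\,\tilde\phi_1$ on $\TTT$ for some $\tilde\phi_1\in K_\theta$, so that $G=z\tilde\phi_1+\theta\phi_2\in H^2$, while on $\TTT$ plainly $|G|=|\bar\phi|=|\phi|$. The hypothesis $\theta^3\mid z\Theta$ lets me invoke Lemma~\ref{le:technique-espace-modele}: for every $\lambda\in\DDD$ the function $\theta k_\lambda^\theta$ lies in $H^\infty\cap\theta K_\theta$, and hence, applying part (b) to the symbol $\bar\phi\in K_\theta+\overline{K_\theta}$, the product $\bar\phi\,\theta k_\lambda^\theta=G\,k_\lambda^\theta$ belongs to $\Kth$.

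Next I would test the operator against these vectors. Using $(\Ath_\phi)^*=\Ath_{\bar\phi}$ from~\eqref{eq:omToom} together with $\Ath_{\bar\phi}(\theta k_\lambda^\theta)=P_\Theta(\bar\phi\,\theta k_\lambda^\theta)=G\,k_\lambda^\theta$ (the product is already in $\Kth$), the reproducing property in $\Kth$ gives
\[
\langle \Ath_\phi \ktl,\ \theta k_\lambda^\theta\rangle
=\langle \ktl,\ G k_\lambda^\theta\rangle
=\overline{(G k_\lambda^\theta)(\lambda)}
=\overline{G(\lambda)}\,\|k_\lambda^\theta\|_2^2 .
\]
Cauchy--Schwarz, the isometry $\|\theta k_\lambda^\theta\|_2=\|k_\lambda^\theta\|_2$, and the estimate $\|\Ath_\phi\ktl\|_2=\|\ktl\|_2\,\|\Ath_\phi\htl\|_2\le \|\ktl\|_2\,\rho_r(\Ath_\phi)$ then yield
\[
|G(\lambda)|\ \le\ \rho_r(\Ath_\phi)\,\frac{\|\ktl\|_2}{\|k_\lambda^\theta\|_2},\qquad \lambda\in\DDD .
\]

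The final step, where the second hypothesis enters, is to bound this ratio of kernel norms by $2$. Since $\|\ktl\|_2^2=(1-|\Theta(\lambda)|^2)/(1-|\lambda|^2)$ and similarly for $\theta$, the condition $\Theta\mid\theta^4$ gives $|\Theta(\lambda)|\ge|\theta(\lambda)|^4$, whence
\[
\frac{\|\ktl\|_2^2}{\|k_\lambda^\theta\|_2^2}
=\frac{1-|\Theta(\lambda)|^2}{1-|\theta(\lambda)|^2}
\le \frac{1-|\theta(\lambda)|^8}{1-|\theta(\lambda)|^2}
=1+|\theta(\lambda)|^2+|\theta(\lambda)|^4+|\theta(\lambda)|^6\le 4 .
\]
Hence $\sup_{\DDD}|G|\le 2\rho_r(\Ath_\phi)$, so $G\in H^\infty$ with $\|G\|_\infty\le 2\rho_r(\Ath_\phi)$; as $|G|=|\phi|$ a.e.\ on $\TTT$, this is precisely $\|\phi\|_\infty\le 2\rho_r(\Ath_\phi)$.

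I expect the main conceptual obstacle to be the choice of test pairing: one must pair $\Ath_\phi\ktl$ not with a reproducing kernel of $\Kth$ but with the auxiliary vector $\theta k_\lambda^\theta$, and recognize that the coanalytic part of $\phi$ reassembles into the \emph{analytic} function $G=\bar\phi\,\theta$ with $|G|=|\phi|$. This is exactly what allows $\rho_r$ alone (rather than the full $\rho$) to control a symbol that is neither analytic nor coanalytic. The two divisibility assumptions then play cleanly separated roles: $\theta^3\mid z\Theta$ places the relevant products inside $\Kth$ via Lemma~\ref{le:technique-espace-modele}, while $\Theta\mid\theta^4$ is precisely what pins the kernel-norm ratio, and hence the final constant, down to $2$.
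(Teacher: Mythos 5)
Your proof is correct and follows essentially the same route as the paper: both form the analytic function $G=\theta\bar\phi$ (the paper's $\phi_1$) with $|G|=|\phi|$ a.e., use Lemma~\ref{le:technique-espace-modele} to place the relevant products in $K_\Theta$, pair $A_\phi^\Theta k_\lambda^\Theta$ against elements of $\theta K_\theta$, and invoke $\Theta\mid\theta^4$ to bound the kernel-norm ratio by $2$. The only (cosmetic) difference is that you test directly against the extremal vector $\theta k_\lambda^\theta$, whereas the paper pairs against an arbitrary $\theta f_1$ with $f_1\in K_\theta\cap L^\infty$ and then optimizes, which recovers exactly the same quantity $\|k_\lambda^\theta\|_2$.
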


\begin{proof} Using Lemma~\ref{le:technique-espace-modele}, if $f\in L^\infty\cap\theta K_\theta$, then $f\in K_\Theta$ and $\bar\phi f \in \Kth$; thus $A^\Theta_{\bar\phi} f=\bar\phi f$. 
If we write $f=\theta f_1$, $\phi_1=\theta\bar\phi$, then 
$\phi_1\in H^2$, $f_1\in K_\theta$, and $\phi_1 f_1=\bar\phi f = A^\Theta_{\bar\phi} f\in \Kth$. Therefore, for $\lambda\in\DDD$, 
\[
\begin{split}
|\phi_1(\lambda)f_1(\lambda)|&=|\langle \phi_1 f_1,k_\lambda^\Theta \rangle|=|\langle \theta f_1,\phi k_\lambda^\Theta \rangle|=|\langle \theta f_1,A_\phi^\Theta k_\lambda^\Theta\rangle| \\
&\le \|f_1\|\|A_\phi^\Theta k_\lambda^\Theta\|_2 \le \|f_1\| \|k_\lambda^\Theta\|_2\rho_r(A_\phi^\Theta),
\end{split}
\]
where we used the fact that $\theta f_1\in K_\Theta$. 

For a fixed $\lambda\in\DDD$, 
\[
\mathop{\sup_{f_1\in  K_\theta\cap L^\infty}}_{\|f_1\|_2\le 1} |f_1(\lambda)|= \mathop{\sup_{f_1\in  K_\theta\cap L^\infty}}_{\|f_1\|_2\le 1} |\<f_1, k^\theta_\lambda\>| = \|k^\theta_\lambda \|_2,
\]
and thus
\[
|\phi_1(\lambda)|\le \rho_r(A^\Theta_\phi) \frac{\|k^{\Theta}_\lambda\|_2}{\|k^\theta_\lambda \|_2}
= \rho_r(A^\Theta_\phi) \frac{(1-|\Theta(\lambda)|^2)^{1/2}}{(1-|\theta(\lambda)|^2)^{1/2}}.
\]
If $\Theta$ divides $\theta^4$, then $|\Theta(\lambda)|\ge |\theta(\lambda)|^4$, and therefore
\[
1-|\Theta(\lambda)|^2\le 1-|\theta(\lambda)|^8\le 4(1-|\theta(\lambda)|^2).
\]
It follows that $|\phi_1(\lambda)|\le 2\rho_r(A^\Theta_\phi)$ for all $\lambda\in \DDD$, and thus $\|\phi_1\|_\infty \le 2\rho_r(A^\Theta_\phi)$. The proof is finished by noting that $\|\phi\|_\infty=\|\phi_1\|_\infty$.
\end{proof}

%:central
%
%

As a consequence, we obtain a general result for the existence of bounded symbols and Reproducing Kernel Thesis.

\begin{cor}\label{co:lcr}
Let $\Theta$ be an inner function and assume that there is another inner function $\theta$ such that $\theta^3$ divides $z\Theta$ and $\Theta$ divides $\theta^4$. Suppose also there are constants $C_i>0$, $i=1,2,3$ such that any $\phi\in L^2$ can be written as $\phi=\phi_1+\phi_2+\phi_3$, with:

(a) $\phi_1\in  K_\theta+\overline{K_\theta}$, $\phi_2\in H^2$, and $\phi_3\in \overline{H^2}$;

(b) $ \rho(A_{\phi_i}^\Theta)\le C_i \rho(A_\phi^\Theta)$ for $i=1,2,3$.

Then the following are equivalent:
\begin{enumerate}
\item[$\mathrm{(i)}$] $A_\phi^\Theta$ has a bounded symbol;
\item[$\mathrm{(ii)}$]  $A_\phi^\Theta$ is bounded;
\item[$\mathrm{(iii)}$]  $\rho(A_\phi^\Theta)<+\infty$.
\end{enumerate}
More precisely, there exists a constant $C>0$ such that any truncated Toeplitz operator $A_\phi^\Theta$ has a symbol $\phi_0$ with $\|\phi_0\|_\infty \le C \rho(A_\phi^\Theta)$.

\end{cor}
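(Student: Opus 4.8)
The plan is to observe that the implications $\mathrm{(i)}\Rightarrow\mathrm{(ii)}\Rightarrow\mathrm{(iii)}$ are immediate: a bounded symbol makes $A_\phi^\Theta$ bounded, and boundedness gives $\rho(A_\phi^\Theta)\le\|A_\phi^\Theta\|<+\infty$ because the vectors $h_\lambda^\Theta$ and $\tilde h_\lambda^\Theta$ have norm~$1$. The entire content, including the quantitative estimate, is therefore the implication $\mathrm{(iii)}\Rightarrow\mathrm{(i)}$. The strategy is to use hypothesis (a) to split $\phi=\phi_1+\phi_2+\phi_3$, produce a bounded symbol for each summand using a boundedness result already proved in the paper, and then add the three bounded symbols, controlling all norms linearly in $\rho(A_\phi^\Theta)$ via hypothesis (b).

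First I would treat $\phi_1$. Since $\phi_1\in K_\theta+\overline{K_\theta}$ and the divisibility assumptions (that $\theta^3$ divides $z\Theta$ and $\Theta$ divides $\theta^4$) are exactly the hypotheses of Theorem~\ref{th:centralsymbol}, that theorem applies verbatim and shows that $\phi_1$ is \emph{itself} a bounded symbol, with
\[
\|\phi_1\|_\infty\le 2\rho_r(A_{\phi_1}^\Theta)\le 2\rho(A_{\phi_1}^\Theta)\le 2C_1\rho(A_\phi^\Theta),
\]
where we used $\rho_r\le\rho$ and hypothesis (b). Next, $\phi_2\in H^2$, so Proposition~\ref{pr:bonsall} supplies a symbol $\psi_2\in L^\infty$ for $A_{\phi_2}^\Theta$ with $\|\psi_2\|_\infty\le C'\rho_r(A_{\phi_2}^\Theta)\le C'C_2\rho(A_\phi^\Theta)$, where $C'$ is the universal constant of that proposition. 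Symmetrically, $\phi_3\in\overline{H^2}$, so Proposition~\ref{pr:cobonsall} yields a symbol $\psi_3\in L^\infty$ for $A_{\phi_3}^\Theta$ with $\|\psi_3\|_\infty\le C''\rho_d(A_{\phi_3}^\Theta)\le C''C_3\rho(A_\phi^\Theta)$.

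Finally I would assemble the pieces. Because the correspondence $\psi\mapsto A_\psi^\Theta$ is linear in the symbol and $A_{\phi_2}^\Theta=A_{\psi_2}^\Theta$, $A_{\phi_3}^\Theta=A_{\psi_3}^\Theta$, the function $\phi_0:=\phi_1+\psi_2+\psi_3$ satisfies $A_{\phi_0}^\Theta=A_{\phi_1}^\Theta+A_{\phi_2}^\Theta+A_{\phi_3}^\Theta=A_\phi^\Theta$, so $\phi_0$ is a bounded symbol for $A_\phi^\Theta$. The triangle inequality then gives $\|\phi_0\|_\infty\le(2C_1+C'C_2+C''C_3)\,\rho(A_\phi^\Theta)$, which is the asserted estimate with $C:=2C_1+C'C_2+C''C_3$, and in particular establishes $\mathrm{(iii)}\Rightarrow\mathrm{(i)}$.

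In truth this corollary is essentially a bookkeeping assembly of three boundedness results proved earlier, and the only points requiring care are to keep every constant linear in $\rho(A_\phi^\Theta)$ and to verify that the three individual bounded symbols really add to a symbol for the original operator. The genuine difficulty is not in this proof at all, but lies in the standing hypothesis (b)---exhibiting, for a given $\Theta$, a splitting into the three controlled pieces---which is what must be checked in each concrete family of model spaces to which the corollary will be applied.
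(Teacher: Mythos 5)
Your proof is correct and follows essentially the same route as the paper's: split $\phi$ by hypothesis (a), apply Theorem~\ref{th:centralsymbol} to see that $\phi_1$ is itself a bounded symbol, apply Propositions~\ref{pr:bonsall} and~\ref{pr:cobonsall} to replace $\phi_2$ and $\phi_3$ by bounded symbols of the same operators, and add the three, with each piece controlled by $C_i\rho(A_\phi^\Theta)$ via hypothesis (b). Your closing remark correctly identifies that the real work in applications lies in producing the decomposition satisfying (b), which is exactly how the paper uses this corollary in Theorems~\ref{th:toeplitz} and~\ref{th:singular}.
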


There are of course many decompositions of $\phi$ as in (a);  the difficulty  consists in finding one that satisfies (b). 

\begin{proof}
It is immediate that $\mathrm{(i)}\Longrightarrow\mathrm{(ii)}\Longrightarrow \mathrm{(iii)}$, so it remains to prove $\mathrm{(iii)}\Longrightarrow\mathrm{(i)}$. Since $\rho(A_{\phi_i}^\Theta)<+\infty$, $i=2,3$, Proposition~\ref{pr:bonsall}  and \ref{pr:cobonsall} imply that $A_{\phi_i}^\Theta$ have  bounded symbols $\tilde\phi_i$ with $\|\tilde\phi_i\|_\infty\le \tilde C \rho(A_{\phi_i}^\Theta)\le \tilde C C_i \rho(A_\phi^\Theta)$. As for $\phi_1$, we can apply Theorem~\ref{th:centralsymbol} which gives that $\phi_1$ is bounded with $\|\phi_1\|_\infty\le 2 \rho_r(A_{\phi_1}^\Theta)\le 2 C_1 \rho(A_\phi^\Theta)$. Finally $A_\phi^\Theta$ has the bounded symbol $\phi_0=\phi_1+\tilde\phi_2+\tilde\phi_3$ whose norm is at most $(2C_1+\tilde C(C_2+C_3))\rho(A_\phi^\Theta)$.
\end{proof}

\subsection{Classical Toeplitz matrices}

Suppose $\Theta(z)=z^N$; the space $K_\Theta$ is then an $N$-dimensional space with orthonormal basis formed by monomials, and truncated Toeplitz operators have a (usual) Toeplitz matrix with  respect of this basis. Of course every truncated Toeplitz operator has a bounded symbol; it is however interesting that there exists a universal estimate of this bound. The question had been raised in~\cite[Section 7]{Sarason}; the positive answer had actually been already independently obtained in~\cite{BT01} and~\cite{NF03}. The following result is stronger, giving a universal estimate for the symbols in terms of the action on the reproducing kernels. 

\begin{thm}\label{th:toeplitz} Suppose $\Theta(z)=z^N$. There exists a constant $C>0$, independent of $N$, such that
any  truncated Toeplitz operator  $\Ath_\phi$ has a symbol $\phi_0\in L^\infty$ such that  $\|\phi_0\|_\infty\le C  \rho(A_\phi^\Theta ) $.
\end{thm}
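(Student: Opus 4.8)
The plan is to work throughout in the concrete model $\Kth=K_{z^N}$, which is the space of polynomials of degree at most $N-1$. In the monomial basis a truncated Toeplitz operator $\Ath_\phi$ is an ordinary $N\times N$ Toeplitz matrix $T=(\hat\phi(i-j))_{0\le i,j\le N-1}$, the reproducing kernel is the geometric sum $k_\lambda^\Theta(z)=\sum_{j=0}^{N-1}\bar\lambda^{\,j}z^j$, the difference quotient is $\tilde k_\lambda^\Theta(z)=\sum_{j=0}^{N-1}\lambda^{j}z^{N-1-j}$, and the involution $\om$ acts on $K_{z^N}$ simply by the flip $z^j\mapsto z^{N-1-j}$. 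Since the map $A\mapsto\inf\{\|\phi_0\|_\infty:\Ath_{\phi_0}=A\}$ (a quotient norm) and $\rho$ are \emph{both} norms on the finite-dimensional space $\TT(K_{z^N})$, the desired estimate is automatic for each fixed $N$; the entire difficulty is the uniformity of the constant in $N$. I would therefore split the statement into two parts: first the \emph{uniform reproducing kernel thesis} $\|\Ath_\phi\|\le C\,\rho(\Ath_\phi)$ with $C$ independent of $N$, and then the passage from operator norm to bounded symbol. For the second step one may invoke the universal estimate of \cite{BT01,NF03}, which furnishes a symbol $\phi_0$ with $\|\phi_0\|_\infty\le C'\|\Ath_\phi\|$; composed with the first step this gives $\|\phi_0\|_\infty\le CC'\rho(\Ath_\phi)$, which is exactly the claim. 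Thus the real content to be established is the uniform inequality $\|\Ath_\phi\|\le C\rho(\Ath_\phi)$.

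To prove this I would exploit the explicit form of the test vectors. A direct computation with the boundary kernels $h_\zeta^\Theta=N^{-1/2}\sum_{j}\bar\zeta^{\,j}z^j$, $\zeta\in\TTT$, gives
\[
\langle \Ath_\phi h_\zeta^\Theta, h_\zeta^\Theta\rangle
=\sum_{|k|<N}\Bigl(1-\tfrac{|k|}{N}\Bigr)\hat\phi(k)\,\zeta^{k}
=(\sigma_N\phi)(\zeta),
\]
the $N$-th Ces\`aro (Fej\'er) mean of the symbol; letting $\lambda\to\zeta$ in \eqref{eq:rho} shows that $\rho_r(\Ath_\phi)$ already dominates $\|\sigma_N\phi\|_\infty$, and more generally the two-parameter family $\langle \Ath_\phi h_\zeta^\Theta, h_\eta^\Theta\rangle$ encodes weighted partial Fourier sums of $\phi$. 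Because $\om$ interchanges $z^j$ and $z^{N-1-j}$, the difference-quotient data $\rho_d(\Ath_\phi)=\rho_r((\Ath_\phi)^*)$ supplies the symmetric information obtained by testing against the reversed kernels, so that $\rho=\max\{\rho_r,\rho_d\}$ controls both ends of the frequency band $-(N-1),\dots,N-1$ at once. The aim is then to recover the full bilinear form $(f,g)\mapsto\langle\Ath_\phi f,g\rangle$ from this kernel data by a de la Vall\'ee Poussin type summation, in which the sharp truncation at frequency $N$ is replaced by a smooth one: the corresponding multipliers have uniformly bounded $\ell^1$-Fourier mass, hence act on $L^\infty$ with constants independent of $N$, while the two families of test vectors pin down the required coefficients.

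The main obstacle is precisely the phenomenon that makes $\rho_r$ alone insufficient (cf.\ Example~\ref{ex:rkt}): recovering a symbol from the matrix is, at the level of frequencies, a one-sided analytic/coanalytic truncation, and on Toeplitz matrices the triangular truncation has norm of order $\log N$. Consequently any argument that first decomposes $\phi$ into its analytic and coanalytic parts and estimates each via Proposition~\ref{pr:bonsall} and Proposition~\ref{pr:cobonsall} necessarily loses a logarithmic factor; in particular the general scheme of Corollary~\ref{co:lcr} with the naive Fourier splitting $\phi=\phi_1+\phi_2+\phi_3$ fails condition (b) uniformly here, so a hands-on argument is unavoidable. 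The whole point of the smoothing and of using \emph{both} $\rho_r$ and $\rho_d$ is to circumvent this sharp cut-off. Carrying this out — selecting the multiplier, verifying that the function produced is genuinely a symbol of $\Ath_\phi$ (i.e.\ agrees with $\hat\phi(k)$ for $|k|<N$), and bounding its $L^\infty$ norm solely in terms of $\rho$ — is where the technical work concentrates, and I expect this uniform control of the smoothed reconstruction to be the crux. The finitely many small values of $N$, for which no clean smoothing window is available, are disposed of separately by the finite-dimensional remark of the first paragraph, being absorbed into the universal constant.
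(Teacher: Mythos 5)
Your proposal is a plan rather than a proof: the reduction to the uniform inequality $\|\Ath_\phi\|\le C\rho(\Ath_\phi)$ (followed by the universal symbol estimate of \cite{BT01,NF03}) is legitimate, and your instincts about de la Vall\'ee Poussin smoothing and the necessity of using both $\rho_r$ and $\rho_d$ are sound, but the actual crux --- recovering the bilinear form from the kernel data with constants independent of $N$ --- is exactly the step you defer (``I expect this uniform control \dots to be the crux''). Nothing in the proposal explains how the middle frequency band is to be controlled: after any smooth three-window splitting $\phi=\phi_1+\phi_2+\phi_3$ with $\phi_2$ analytic and $\phi_3$ coanalytic, the analytic and coanalytic tails are indeed handled by Propositions~\ref{pr:bonsall} and~\ref{pr:cobonsall}, but the remaining piece $\phi_1$, supported on frequencies $|n|\lesssim N/3$, is neither analytic nor coanalytic, and splitting \emph{it} sharply would reintroduce the $\log N$ loss you correctly identify. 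The missing ingredient is Theorem~\ref{th:centralsymbol}: when $\theta^3$ divides $z\Theta$ and $\Theta$ divides $\theta^4$, a symbol $\phi_1\in K_\theta+\overline{K_\theta}$ satisfies $\|\phi_1\|_\infty\le 2\rho_r(A^\Theta_{\phi_1})$ directly, with no triangular truncation at all. Without this (or an equivalent device) your reconstruction scheme has no way to close.

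You also dismiss the route that actually works. You assert that ``the general scheme of Corollary~\ref{co:lcr} \dots fails condition (b) uniformly here, so a hands-on argument is unavoidable,'' but this conflates the sharp analytic/coanalytic truncation (which does fail) with the scheme itself. The paper's proof applies Corollary~\ref{co:lcr} with $\theta=z^M$, $M=[(N+1)/3]$, and the smooth windows $\eta_1=F_M$, $\eta_2=2e^{2iMt}F_{2M}-e^{2iMt}F_M$, $\eta_3=\bar\eta_2$; condition (b) then holds with universal constants because of the rotation-averaging identity $\langle A^\Theta_{\eta*\phi}f,g\rangle=\frac{1}{2\pi}\int\eta(e^{it})\langle A^\Theta_\phi\tau_t f,\tau_t g\rangle\,dt$ together with \eqref{eq:action-tau-t-noyau}, which give $\rho(A^\Theta_{\eta*\phi})\le\|\eta\|_1\,\rho(A^\Theta_\phi)$ and $\|\eta_i\|_1\le 3$. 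This averaging identity --- the precise mechanism by which $\rho$ of each smoothed piece is controlled by $\rho$ of the original operator --- is also absent from your sketch, where the corresponding claim about ``uniformly bounded $\ell^1$-Fourier mass'' is stated but not connected to the quantity $\rho$. In short: the strategy you gesture at is close in spirit to the paper's, but the two lemmas that make it work (Theorem~\ref{th:centralsymbol} for the middle band and the convolution estimate for $\rho$) are missing, and the argument as written does not constitute a proof.
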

%:toeplitz
\begin{proof} 
Consider a smooth function $\eta_k$ on $\TTT$, and the convolution (on $\TTT)$ 
$\phi_k=\eta_k*\phi$, that is,
\[
\phi_k(e^{is})=\frac{1}{2\pi} \int_{-\pi}^\pi \eta_k(e^{it})\phi(e^{i(s-t)})\, dt.
\]
We have then $\hat \phi_k(n)=\hat \eta_k(n)\hat \phi(n)$, $n\in\ZZZ$.

The map $\tau_t$ defined by $\tau_t:f(z)\mapsto f(e^{it}z)$ is a unitary on $K_{\Theta}$ and straightforward computations show that 
\begin{equation}\label{eq:action-tau-t-noyau}
\tau_t h_\lambda^{\Theta}=h_{e^{-it}\lambda}^{\Theta}\quad\hbox{and}\quad  \tau_t \tilde h_\lambda^{\Theta}= e^{i(N-1)t} \tilde h_{e^{-it}\lambda}^{\Theta},
\end{equation}
for every $\lambda\in\DDD$. By Fubini's Theorem and a change of variables we have
\[
\<A^{\Theta}_{\phi_k} f,g\>=\frac{1}{2\pi} \int_{-\pi}^\pi \eta_k(e^{it})  \<A^{\Theta}_\phi \tau_t(f),\tau_t(g)\>\,dt,
\]
for every $f,g\in K_{\Theta}$. That implies that 
\[
\|A^{\Theta}_{\phi_k} h_{\lambda}^{\Theta}\|=\sup_{\substack{g\in K_{\Theta}\\ \|g\|_2\leq 1}} \left|\langle  A^{\Theta}_{\phi_k} h_{\lambda}^{\Theta},g\rangle\right|\leq \sup_{\substack{g\in K_{\Theta}\\ \|g\|_2\leq 1}} \frac{1}{2\pi}\int_{-\pi}^\pi |\eta_k(e^{it})| |\<A^{\Theta}_\phi \tau_t(h_\lambda^{\Theta}),\tau_t(g)\>|\,dt,
\]
and using \eqref{eq:action-tau-t-noyau}, we obtain
\[
\|A_{\phi_k}^{\Theta} h_\lambda^{\Theta}\|\leq \|\eta_k\|_1 \rho_r(A_{\phi}^{\Theta})\leq \|\eta_k\|_1 \rho(A_{\phi}^{\Theta}).
\]
A similar argument shows that 
\[
\|A_{\phi_k}^{\Theta} \tilde h_\lambda^{\Theta}\|\leq \|\eta_k\|_1 \rho(A_{\phi}^{\Theta})
\]
and thus
\begin{equation}\label{eq:inegalite-cle-rho'}
\rho(A_{\phi_k}^{\Theta})\leq \|\eta_k\|_1 \rho(A_\phi^\Theta).
\end{equation}

Now consider the Fej\'er kernel $F_m$, defined by the formula $\hat F_m(n)=1-\frac{|n|}{m}$ for $|n|\le m$ and $\hat F_m(n)=0$ otherwise. It is well known that $\|F_m\|_1=1$ for all $m\in \NNN$. If we take $M=\left[\frac{N+1}{3}  \right]$ and define $\eta_i$ ($i=1,2,3$) by
\[
\eta_1=  F_M, \qquad \eta_2=2e^{2iMt}F_{2M}- e^{2iMt}F_{M}, \qquad \eta_3=\bar \eta_2,
\]
then $\hat \eta_2(n)=0$ for $n< 0$, $\hat \eta_3(n)=0$ for $n>0$,  $\hat \eta_1(n)+\hat \eta_2(n)+\hat \eta_3(n)=1$ for $|n|\le N$, and $\|\eta_1\|_1=1$, $\|\eta_i\|_1\le 3$ for $i=2,3$. If we denote $\phi_i=\eta_i*\phi$, then $\phi=\phi_1+\phi_2+\phi_3$, $\phi_1\in K_{z^M}+\overline{K_{z^M}}$, $\phi_2$ is analytic and $\phi_3$ is coanalytic. Moreover $z^{3M}$ divides $z^{N+1}$ and $z^N$ divides $z^{4M}$. According to \eqref{eq:inegalite-cle-rho'}, we can apply Corollary~\ref{co:lcr} to obtain that there exists a universal constant $C>0$ such that $A_\phi^\Theta$ has a bounded symbol $\phi_0$ with $\|\phi_0\|_\infty\le C \rho(A_\phi^\Theta)$.

\end{proof}

In particular, it follows from Theorem~\ref{th:toeplitz} that any (classical) Toeplitz matrix $A^{z^N}_{\phi}$  has a symbol $\phi_0$ such that  $\|\phi_0\|_\infty\le C \|A^{z^N}_\phi\|$. The similar statement is proved with explicit estimates  $\|\phi_0\|_\infty\le 4 \|A^{z^N}_\phi\|$ in~\cite{BT01} and $\|\phi_0\|_\infty\le 3 \|A^{z^N}_\phi\|$ in~\cite{NF03}.

We can obtain a slightly more general result (in the choice of the function $\Theta$). 

\begin{cor}Suppose $\Theta=b_\alpha^N$, with $b_\alpha(z)=\frac{\alpha-z}{1-\bar\alpha z}$ a Blaschke factor. There exists a universal constant $C>0$ such that
any  truncated Toeplitz operator  $\Ath_\phi$ has a symbol $\phi_0\in L^\infty$ such that  $\|\phi_0\|_\infty\le C  \rho(A_\phi^\Theta ) $.
\end{cor}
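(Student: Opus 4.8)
The plan is to reduce the statement to the already-settled case $\Theta=z^N$ of Theorem~\ref{th:toeplitz} by means of a unitary change of variable adapted to the automorphism $b_\alpha$. Write $\sigma_\alpha(z)=\frac{\alpha-z}{1-\bar\alpha z}$, so that $b_\alpha=\sigma_\alpha$; since $\sigma_\alpha$ is an involutive automorphism of $\DDD$, one has $\sigma_\alpha\circ\sigma_\alpha=\Id$, and in particular $b_\alpha^N\circ\sigma_\alpha=z^N$. I would introduce the weighted composition operator
\[
(U_\alpha f)(z)=\frac{(1-|\alpha|^2)^{1/2}}{1-\bar\alpha z}\,f(\sigma_\alpha(z)),
\]
and first record its elementary properties: a direct computation (using $1-\bar\alpha\,\sigma_\alpha(z)=\frac{1-|\alpha|^2}{1-\bar\alpha z}$) shows that $U_\alpha^2=\Id$, while the identity $|U_\alpha\mathbf 1|^2=|\sigma_\alpha'|$ on $\TTT$ shows that $U_\alpha$ is unitary on $L^2(\TTT)$. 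Since the weight is bounded and analytic and $\sigma_\alpha$ is an automorphism, $U_\alpha(H^2)\subseteq H^2$; as $U_\alpha^2=\Id$ this inclusion is an equality, and unitarity then gives $U_\alpha(H^2_-)=H^2_-$.

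The two conjugation formulas that drive the argument are $U_\alpha M_\phi U_\alpha=M_{\phi\circ\sigma_\alpha}$ (an immediate consequence of $U_\alpha^2=\Id$) and $U_\alpha K_\Theta=K_{\Theta\circ\sigma_\alpha}$ for every inner function $\Theta$ (because $U_\alpha(\Theta g)=(\Theta\circ\sigma_\alpha)\,U_\alpha g$, whence $U_\alpha(\Theta H^2)=(\Theta\circ\sigma_\alpha)H^2$). Consequently $U_\alpha P_\Theta U_\alpha=P_{\Theta\circ\sigma_\alpha}$, and combining the two I obtain
\[
U_\alpha A_\phi^\Theta U_\alpha=A^{\Theta\circ\sigma_\alpha}_{\phi\circ\sigma_\alpha}.
\]
Taking $\Theta=b_\alpha^N$ turns the right-hand side into the operator $A^{z^N}_{\phi\circ\sigma_\alpha}$ of Theorem~\ref{th:toeplitz}, and the same formula read with $\Theta=z^N$ gives the inverse relation $U_\alpha A^{z^N}_{\psi}U_\alpha=A^{b_\alpha^N}_{\psi\circ\sigma_\alpha}$, which will let me transport symbols back.

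It remains to check that $U_\alpha$ preserves the quantity $\rho$. Testing the reproducing property against $g\in K_{\Theta\circ\sigma_\alpha}$ gives $\langle g,U_\alpha k_\lambda^\Theta\rangle=(U_\alpha g)(\lambda)=\frac{(1-|\alpha|^2)^{1/2}}{1-\bar\alpha\lambda}\langle g,k^{\Theta\circ\sigma_\alpha}_{\sigma_\alpha(\lambda)}\rangle$, so $U_\alpha k_\lambda^\Theta$ is a scalar multiple of $k^{\Theta\circ\sigma_\alpha}_{\sigma_\alpha(\lambda)}$; after normalization, and since $U_\alpha$ is isometric, $U_\alpha h_\lambda^\Theta$ is a \emph{unimodular} multiple of $h^{\Theta\circ\sigma_\alpha}_{\sigma_\alpha(\lambda)}$. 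As $\sigma_\alpha$ is a bijection of $\DDD$, this identifies the two families of normalized reproducing kernels up to unimodular factors, and hence $\rho_r(A_\phi^{b_\alpha^N})=\rho_r(A^{z^N}_{\phi\circ\sigma_\alpha})$. For the difference quotients I would avoid tracking $\om$ through $U_\alpha$, using instead $\rho_d(A_\phi^\Theta)=\rho_r((A_\phi^\Theta)^*)=\rho_r(A_{\bar\phi}^\Theta)$ together with $U_\alpha A^{b_\alpha^N}_{\bar\phi}U_\alpha=A^{z^N}_{\overline{\phi\circ\sigma_\alpha}}$, which reduces the $\rho_d$ statement to the one just proved for $\rho_r$. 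Thus $\rho(A_\phi^{b_\alpha^N})=\rho(A^{z^N}_{\phi\circ\sigma_\alpha})$.

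Finally I would apply Theorem~\ref{th:toeplitz} to $A^{z^N}_{\phi\circ\sigma_\alpha}$ to produce a symbol $\psi_0\in L^\infty$ with $\|\psi_0\|_\infty\le C\,\rho(A^{z^N}_{\phi\circ\sigma_\alpha})=C\,\rho(A_\phi^{b_\alpha^N})$, $C$ universal. Then $A^{b_\alpha^N}_{\psi_0\circ\sigma_\alpha}=U_\alpha A^{z^N}_{\psi_0}U_\alpha=U_\alpha A^{z^N}_{\phi\circ\sigma_\alpha}U_\alpha=A_\phi^{b_\alpha^N}$, so $\phi_0:=\psi_0\circ\sigma_\alpha$ is a symbol for $A_\phi^{b_\alpha^N}$; and since $\sigma_\alpha$ is a homeomorphism of $\TTT$ carrying null sets to null sets, $\|\phi_0\|_\infty=\|\psi_0\|_\infty\le C\,\rho(A_\phi^{b_\alpha^N})$. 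The routine parts are the unitarity and invariance bookkeeping for $U_\alpha$; the one point that genuinely needs care is the preservation of $\rho$ — in particular the difference quotients, which the adjoint identity $(A_\phi^\Theta)^*=A_{\bar\phi}^\Theta$ disposes of cleanly.
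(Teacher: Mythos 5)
Your proposal is correct and follows essentially the same route as the paper: conjugation by the unitary weighted composition operator $U_\alpha$, the identification $U_\alpha A_\phi^{b_\alpha^N}U_\alpha=A^{z^N}_{\phi\circ b_\alpha}$, preservation of $\rho$, and an appeal to Theorem~\ref{th:toeplitz}. The only (cosmetic) difference is that you handle the difference quotients via $\rho_d(A)=\rho_r(A^*)$ and $(A_\phi^\Theta)^*=A_{\bar\phi}^\Theta$, whereas the paper computes $U\tilde h_\lambda^{z^N}=-\bar c_\lambda\tilde h^\Theta_{b_\alpha(\lambda)}$ directly.
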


\begin{proof}

The mapping $U$ defined by 
\[
(U(f))(z):=\frac{(1-|\alpha|^2)^{1/2}}{1-\bar\alpha z} f(b_\alpha(z)),\qquad 
z\in\DDD,\,f\in H^2,
\]
is unitary on $H^2$ and one easily checks that $UP_{z^N}=P_\Theta U$. In particular, it implies that $U(K_{z^N})=K_\Theta$;  straightforward computations show that 
\begin{equation}\label{eq:lien-noyau-reproduisant}
U h_\lambda^{z^N}=c_\lambda h^\Theta_{b_\alpha(\lambda)}\quad\hbox{and}\quad U \tilde h_\lambda^{z^N}=-\bar c_\lambda \tilde h^\Theta_{b_\alpha(\lambda)},
\end{equation}
for every $\lambda\in\DDD$, where $c_{\lambda}:=|1-\bar\lambda\alpha|(1-\bar\lambda\alpha)^{-1}$ is a constant of modulus one. 

Suppose $\Ath_\phi$ is a (bounded) truncated Toeplitz operator;  if $\Phi=\phi\circ b_\alpha$, then the relation $UP_{z^N}=P_\Theta U$ yields  $A^{z^N}_{\Phi }=U^*\Ath_\phi U$. Thus, using \eqref{eq:lien-noyau-reproduisant}, we obtain 
\[
\|A^{z^N}_{\Phi} h_\lambda^{z^N}\|_2=\|U^*\Ath_\phi U h_\lambda^{z^N}\|_2=\|\Ath_\phi h_{b_\alpha(\lambda)}^\Theta\|_2
\]
and 
\[
\|A^{z^N}_{\Phi} \tilde h_\lambda^{z^N}\|_2=\|U^*\Ath_\phi U \tilde h_\lambda^{z^N}\|_2=\|\Ath_\phi \tilde h_{b_\alpha(\lambda)}^\Theta\|_2,
\]
which implies that 
\begin{equation}\label{eq:egalite-rho'}
\rho(A^{z^N}_{\Phi })=\rho(\Ath_\phi )\,.
\end{equation}
Now it remains to apply Theorem~\ref{th:toeplitz} to complete the proof.
\end{proof}

\subsection{Elementary singular inner functions} 
Let us now take $\Theta(z)=\exp(\frac{z+1}{z-1})$. A positive answer to   Questions 1 and 3  is a consequence of results obtained by Rochberg~\cite{R87} and Smith~\cite{martin} on the Paley--Wiener space. We sketch the proof for completeness, without entering into details.

\begin{thm}\label{th:singular}
If $\Theta(z)=\exp(\frac{z+1}{z-1})$ and $\Ath_\phi$ is a truncated Toeplitz operator, 
then the following are equivalent:

{\rm(i)} $\Ath_\phi$ has a bounded symbol;

{\rm(ii)} $\Ath_\phi$ is bounded;

{\rm(iii)} $\rho(\Ath_\phi)<\infty$.

More precisely,
there exists a constant $C>0$ such that any truncated Toeplitz 
operator $\Ath_\phi$ has a symbol $\phi_0$ with 
$\|\phi_0\|_\infty\le C\rho(\Ath_\phi)$.
\end{thm}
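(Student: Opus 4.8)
The plan is to reduce everything to known results on the Paley--Wiener space via the transfer to the upper half-plane of Section~\ref{sse:continuous}. First I would compute the image of $\Theta$ under the change of variable $u(w)=\frac{w-i}{w+i}$: a direct calculation gives $\frac{u(w)+1}{u(w)-1}=iw$, so that $\bm\Theta(w)=\Theta(u(w))=e^{iw}$, the prototypical singular inner function on $\CCC_+$. By Remark~\ref{re:halfspace}, boundedness of $\Ath_\phi$, the existence of a bounded symbol, and --- through~\eqref{transfer-disque-demi-plan-noyau} --- the quantities $\rho_r,\rho_d,\rho$ are all preserved in passing from $\Ath_\phi$ on $\Kth$ to $\bm A^{\bm\Theta}_{\bm\phi}$ on $\bm{K_\Theta}$. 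It therefore suffices to prove the three equivalences, with the quantitative estimate, for $\bm\Theta=e^{iw}$.

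Next I would identify $\bm{K_\Theta}=H^2(\CCC_+)\ominus e^{iw}H^2(\CCC_+)$ with the Paley--Wiener space. On the Fourier side $H^2(\CCC_+)$ corresponds to $L^2(0,+\infty)$ and $e^{iw}H^2(\CCC_+)$ to $L^2(1,+\infty)$, so $\bm{K_\Theta}$ is unitarily equivalent to the space $PW$ of entire functions of exponential type at most $1$ lying in $L^2(\RRR)$. Under this identification $\bm{P_\Theta}$ becomes the projection $P_{PW}$ of $L^2(\RRR)$ onto $PW$, and hence a truncated Toeplitz operator $\bm A^{\bm\Theta}_{\bm\phi}=\bm{P_\Theta}M_{\bm\phi}\bm{P_\Theta}$ becomes exactly a Toeplitz operator $P_{PW}M_{\bm\phi}P_{PW}$ on the Paley--Wiener space, of the kind studied by Rochberg~\cite{R87} and Smith~\cite{martin}. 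Under the same identification the reproducing kernels $\bm{h_\mu^\Theta}$ go to the normalized reproducing kernels of $PW$, while the difference quotients $\bm{\tilde h_\mu^\Theta}=\om(\bm{h_\mu^\Theta})$ go to their images under the natural involution of $PW$ coming from the symmetry $t\mapsto 1-t$ of $[0,1]$. Since $\rho_d(\bm A^{\bm\Theta}_{\bm\phi})=\rho_r((\bm A^{\bm\Theta}_{\bm\phi})^*)$, controlling $\rho=\max\{\rho_r,\rho_d\}$ amounts to controlling the action of both the operator and its adjoint on the reproducing kernels of $PW$.

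With this dictionary in place, the two non-trivial implications follow from the cited results. Since $\mathrm{(i)}\Rightarrow\mathrm{(ii)}\Rightarrow\mathrm{(iii)}$ is immediate (a bounded symbol gives a bounded operator, and $\rho\le\|\cdot\|$), it remains to prove $\mathrm{(iii)}\Rightarrow\mathrm{(i)}$. The Reproducing Kernel Thesis for Toeplitz operators on $PW$, established by Smith~\cite{martin}, yields a constant $C_1$ with $\|\bm A^{\bm\Theta}_{\bm\phi}\|\le C_1\,\rho(\bm A^{\bm\Theta}_{\bm\phi})$, giving $\mathrm{(iii)}\Rightarrow\mathrm{(ii)}$; and the analysis of bounded symbols for Paley--Wiener Toeplitz operators due to Rochberg~\cite{R87} produces, for a bounded such operator, an $L^\infty$ symbol $\bm\psi$ with $\|\bm\psi\|_\infty\le C_2\|\bm A^{\bm\Theta}_{\bm\phi}\|$, giving $\mathrm{(ii)}\Rightarrow\mathrm{(i)}$. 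Transporting $\bm\psi$ back through $\mathcal{U}$ as in Remark~\ref{re:halfspace} furnishes a bounded symbol $\phi_0$ for $\Ath_\phi$ with $\|\phi_0\|_\infty\le C\,\rho(\Ath_\phi)$, where $C=C_1C_2$, which is the asserted estimate.

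The main obstacle is the bookkeeping in the dictionary of the second paragraph rather than any genuinely new estimate: one must verify that testing a truncated Toeplitz operator on the two families $\htl$ and $\httl$ corresponds precisely to the hypotheses under which Smith's theorem is stated (equivalently, to testing the Paley--Wiener Toeplitz operator \emph{and} its adjoint on reproducing kernels), and that Rochberg's bounded symbol --- a priori a function on $\RRR$ multiplying $PW$ --- indeed pulls back to a genuine $L^\infty(\TTT)$ symbol of $\Ath_\phi$ with the stated norm control. It is precisely because $\Theta$ is singular that $\rho_r$ alone cannot suffice (cf. Example~\ref{ex:rkt} and Proposition~\ref{pr:rho}), so retaining both families throughout is essential, and this is where care is needed.
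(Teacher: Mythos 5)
Your argument is correct in outline, but it takes a genuinely different route from the one the paper actually writes down. You reduce to $\bm\Theta(w)=e^{iw}$ exactly as the paper does (and your computation $\frac{u(w)+1}{u(w)-1}=iw$ is right), but then you invoke Smith's Reproducing Kernel Thesis and Rochberg's bounded-symbol theorem for the Paley--Wiener space as black boxes, chaining $\|\phi_0\|_\infty\le C_2\|\Ath_\phi\|\le C_1C_2\,\rho(\Ath_\phi)$. The paper instead gives a self-contained sketch: it splits the symbol on the Fourier side by convolving with three kernels whose transforms are supported in $[-1/3,1/3]$, $[0,2]$ and $[-2,0]$ and sum to $1$ on $[-1,1]$, so that $\bm\phi=\bm\phi_1+\bm\phi_2+\bm\phi_3$ with $\bm\phi_1\in\bm{K}_{\bm\Theta^{1/3}}+\overline{\bm{K}_{\bm\Theta^{1/3}}}$, $\bm\phi_2$ analytic and $\bm\phi_3$ antianalytic, each piece satisfying $\bm\rho(\bm A^{\bm\Theta}_{\bm\phi_i})\le C_1\bm\rho(\bm A^{\bm\Theta}_{\bm\phi})$; it then applies the half-plane analogue of Corollary~\ref{co:lcr}, i.e.\ Theorem~\ref{th:centralsymbol} for the low-frequency piece and Propositions~\ref{pr:bonsall} and~\ref{pr:cobonsall} for the other two. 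The paper's route buys a proof template that is reused verbatim for $\Theta=z^N$ in Theorem~\ref{th:toeplitz} and yields the estimate in terms of $\rho$ directly; your route is shorter but shifts the burden onto checking that the cited theorems are stated in the form you need. Your identification of the two points that must be checked is exactly right: Smith's RKT does test both the operator and its adjoint on reproducing kernels, which under $\rho_d(A)=\rho_r(A^*)$ matches $\rho=\max\{\rho_r,\rho_d\}$ (and Example~\ref{ex:rkt} shows $\rho_r$ alone cannot suffice here); and even if Rochberg's norm control is not stated explicitly, it can be recovered from the open mapping theorem as in Section~4. The paper itself concedes in the sentence preceding the theorem that the result ``is a consequence of results obtained by Rochberg and Smith,'' so your proof is a legitimate, if less self-contained, alternative.
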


\begin{proof} By Remark~\ref{re:halfspace} it is enough 
to prove the corresponding result for the space $\bm K_{\bm \Theta}$, 
where $\bm\Theta(w)=e^{iw}$, and  $\bm\rho$ is the analogue of $\rho$ 
for operators on $\bm K_{\bm \Theta}$. If $\FF$ denotes 
the Fourier transform on $\RRR$, then  $\bm K_{\bm \Theta}=\FF^{-1}(L^2([0,1]))$, 
and  we may suppose that the symbol $\bm\phi\in (t+i) \FF^{-1}(L^2([-1,1]))$.

For a rapidly decreasing function $\eta$ on $\RRR$, define
\begin{equation}\label{eq:eta}
\Psi(s)=\int_\RRR \eta(t) \bm\phi (s-t)\, dt.
\end{equation}
We have then $\hat\Psi=\hat\eta \hat {\bm\phi}$ and 
$\bm\rho({\bm A}^{\bm \Theta}_\psi)\le 
\|\eta\|_1\cdot \bm\rho({\bm A}^{\bm \Theta}_\phi)$.

Take now $\psi_i$, $i=1,2,3$, such that $\supp \hat\psi_1\subset [-1/3, 1/3]$, 
$\supp \hat\psi_2\subset [0, 2]$, $\supp \hat\psi_3\subset [-2, 0]$, 
and $\hat\psi_1+\hat\psi_2+\hat\psi_3=1$ on $[-1, 1]$. If 
we define $\bm\phi_i$ by replacing $\eta $ with $\psi_i$ 
in~\eqref{eq:eta}, then there is a constant $C_1>0$ such that 
$\bm\rho({\bm A}^{\bm \Theta}_{\bm\phi_i})
\le C_1 \bm\rho({\bm A}^{\bm \Theta}_{\bm\phi})$ for $i=1,2,3$.

On the other hand, $\bm\phi=\bm\phi_1+\bm\phi_2+\bm\phi_3$, 
$\bm\phi_1\in \bm{K}_{\bm{\Theta}^{1/3}}+\overline{\bm{K}_{\bm{\Theta}^{1/3}}}$,
 $\bm\phi_2$ is analytic, $\bm\phi_3$ is antianalytic. 
We may then apply the analogue of Corollary~\ref{co:lcr} 
for the upper half-plane which completes the proof. 

\end{proof}

One can see easily that a similar result is valid for any elementary 
singular function $\Theta(z)=\exp\left( a \frac{z+\zeta}{z-\zeta} \right)$, 
for $\zeta\in\TTT$, $a>0$.

\begin{rem}
Truncated Toeplitz operators on the model space 
$\bm K_{\bm \Theta}$ with $\bm\Theta(w)=e^{iaw}$ 
are closely connected with the so-called 
truncated Wiener--Hopf operators. Let  $\bm\phi \in L^1(\mathbb{R})$
and let 
$$
(W_{\bm\phi} f)(x) = \int_0^a f(t)\bm \phi(x-t)dt, \quad x\in (0,a),
$$
for $f\in L^2(0,a) \cap L^\infty(0,a)$. If $W$ extends to a
bounded operator on $L^2(0,a)$, then it is called a 
{\it truncated Wiener--Hopf operator}. If $\bm\phi =\hat{\bm\psi}$ 
with $\bm\psi \in (t+i)L^2(\mathbb{R})$ (the Fourier transform 
may be understood in the distributional sense), then
\[
W_{\bm\phi}f = \FF P_{\bm\Theta} (\bm\psi g)
\]
for $g = \check{f} \in \bm K_{\bm \Theta}$. 
Thus, the Wiener--Hopf operator $W_{\bm\phi}$
is unitarily equivalent to ${\bm A}^{\bm \Theta}_{\bm\psi}$.
\end{rem}

\mysection{Truncated Toeplitz operators with positive symbols}
As noted in Remark~\ref{re:relate_question}, if $\phi\in L^2$ 
is a positive function, then $\Ath_\phi$ is bounded if and only if 
$\phi\,dm$ is a Carleson measure for $K_\Theta$.
As a consequence mainly of results of Cohn~\cite{CohnPac82, Cohn-JOT86}, 
one can say more for positive symbols $\phi$ for a special class of model 
spaces. Recall that $\Theta$ is said to satisfy the {\emph {connected 
level set condition}} (and we write $\Theta\in (CLS)$) if there is 
$\varepsilon\in (0,1)$ such that the level set 
\[
\Omega(\Theta,\varepsilon):=\{z\in\DDD:|\Theta(z)|<\varepsilon\}
\]
is connected. Such inner functions are also referred to as 
{\emph {one-component}} inner functions.  

\begin{thm}\label{th:positive}
 Let $\Theta$ be an inner function such that $\Theta\in (CLS)$.  
If $\phi$ is a positive function in $L^2$, then the following conditions are equivalent:
\begin{enumerate}
\item $A_\phi^\Theta$ is a bounded operator on $K_\Theta^2$;
\item $\sup_{\lambda\in\DDD}\|A_\phi^\Theta h_\lambda^\Theta\|_2<+\infty$;
\item $\sup_{\lambda\in\DDD}|\langle 
A_\phi^\Theta h_\lambda^\Theta,h_\lambda^\Theta \rangle|<+\infty$;
\item $A_\phi^\Theta$ has a bounded symbol.
\end{enumerate}
\end{thm}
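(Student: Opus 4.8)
The plan is to establish the cycle $(4)\Rightarrow(1)\Rightarrow(2)\Rightarrow(3)\Rightarrow(4)$, in which only the last implication carries real content. For $(4)\Rightarrow(1)$ I would simply recall that $\phi\mapsto A_\phi^\Theta$ is contractive from $L^\infty$ into $\TT(K_\Theta)$. For $(1)\Rightarrow(2)$, since $\|\htl\|_2=1$ one has $\sup_{\lambda\in\DDD}\|A_\phi^\Theta\htl\|_2=\rho_r(A_\phi^\Theta)\le\|A_\phi^\Theta\|$, and $(2)\Rightarrow(3)$ is the Cauchy--Schwarz bound $|\langle A_\phi^\Theta\htl,\htl\rangle|\le\|A_\phi^\Theta\htl\|_2$. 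The positivity of $\phi$ and the $(CLS)$ hypothesis enter only in the final implication.

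The heart of the proof is $(3)\Rightarrow(4)$, where I would first use positivity to rewrite the quadratic form. Since $\htl\in\Kth$ and $\phi\ge 0$,
\[
\langle A_\phi^\Theta\htl,\htl\rangle=\langle\phi\htl,\htl\rangle=\int_\TTT|\htl|^2\phi\,dm=\int_\TTT|\htl|^2\,d\mu,
\]
where $d\mu=\phi\,dm$ is a positive measure on $\TTT$. Thus hypothesis $(3)$ says precisely that $\sup_{\lambda\in\DDD}\int_\TTT|\htl|^2\,d\mu<\infty$, i.e.\ that $\mu$ passes the reproducing-kernel Carleson test for $\Kth$. At this point I would invoke the characterization of Cohn \cite{CohnPac82,Cohn-JOT86}: when $\Theta\in(CLS)$, the reproducing-kernel test is equivalent to the full Carleson embedding $\int_\TTT|f|^2\,d\mu\le c\|f\|_2^2$ for all $f\in\Kth$, equivalently to a geometric box estimate $\mu(I)\le C|I|$ on the arcs $I$ reaching into the level set $\Omega(\Theta,\varepsilon)$. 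By Remark~\ref{re:relate_question} this already yields that $A_\phi^\Theta=A_\mu^\Theta$ is bounded, giving $(1)$.

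It remains to upgrade boundedness to the existence of a bounded symbol. Since $\phi$ is real-valued, $A_\phi^\Theta$ is self-adjoint by~\eqref{eq:omToom}, so it suffices to find a single analytic $g\in H^2$, bounded on the Carleson curve $\Gamma_\alpha$ associated to $\Theta$, such that $g+\bar g$ is again a symbol for $A_\phi^\Theta$, i.e.\ $P_{\mathfrak{S}_\Theta}(g+\bar g)=P_{\mathfrak{S}_\Theta}\phi$; Proposition~\ref{Lem:cle-curve} and Corollary~\ref{cor:condition-suffisante-bornitude} (using that $|dz|$ is itself a Carleson measure on $\Gamma_\alpha$) would then furnish the bounded symbol. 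The natural candidate for $g$ is the analytic, Cauchy-type projection of $\mu$, and its control on $\Gamma_\alpha$ should come from combining the box estimate above with the two-sided bound $\eta(\alpha)\le|\Theta|\le\alpha$ on $\Gamma_\alpha$; this is exactly where the connected level set condition is indispensable, since for $\Theta\in(CLS)$ the curve $\Gamma_\alpha$ may be taken to track the boundary of the single connected set $\Omega(\Theta,\varepsilon)$. I expect this final step — actually producing the $L^\infty$ symbol rather than merely the bounded operator — to be the main obstacle, and it is precisely the part I would extract from Cohn's work; everything else is soft.
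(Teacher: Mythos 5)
Your cycle of implications and your treatment of $(3)\Rightarrow(1)$ match the paper exactly: the identity $\langle A_\phi^\Theta h_\lambda^\Theta,h_\lambda^\Theta\rangle=\int_\TTT|h_\lambda^\Theta|^2\phi\,dm$ together with Cohn's theorem that, for $\Theta\in(CLS)$, the reproducing-kernel test characterizes Carleson measures for $K_\Theta$, is precisely the paper's argument (its equation \eqref{eq:cohn7} plus the citation of \cite{CohnPac82}). The problem is the last step. Your proof of $(1)\Rightarrow(4)$ is not a proof: you propose to take $g$ to be ``the analytic, Cauchy-type projection of $\mu$,'' show it is bounded on the Carleson curve $\Gamma_\alpha$, and then feed $g+\bar g$ into Corollary~\ref{cor:condition-suffisante-bornitude} --- but you explicitly leave the key estimate (control of $g$ on $\Gamma_\alpha$ from the box condition on $\mu$) as ``the main obstacle'' to be ``extracted from Cohn's work.'' That estimate is the entire content of the implication; nothing in the box condition $\mu(I)\le C|I|$ or the bounds $\eta(\alpha)\le|\Theta|\le\alpha$ on $\Gamma_\alpha$ hands you pointwise control of a Cauchy integral on the curve without substantial further argument, and you would also still need to check that $g+\bar g$ differs from $\phi$ by an element of $\Theta H^2+\overline{\Theta H^2}$. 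As written, the implication $(1)\Rightarrow(4)$ is a plan, not an argument.

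The paper closes this gap by citing a different, ready-made result of Cohn \cite{Cohn-JOT86}: if $A_\phi^\Theta$ is bounded (with $\phi\ge0$ and $\Theta\in(CLS)$), then $\phi=\Re(v+\Theta h)$ for some $v\in L^\infty(\TTT)$ and $h\in H^2$. From this the bounded symbol is immediate: $\phi-\Re v=\tfrac12(\Theta h+\bar\Theta\bar h)\in\Theta H^2+\overline{\Theta H^2}$, so $A_\phi^\Theta=A_{\Re v}^\Theta$ and $\Re v$ is the desired $L^\infty$ symbol. If you want a complete proof along your lines, you would essentially have to reprove that theorem of Cohn; the efficient route is to invoke it directly and finish with the two-line algebraic reduction above.
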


\begin{proof} The implications $(4)\Longrightarrow(1)\Longrightarrow(2)\Longrightarrow(3)$ 
are obvious.

We have
\begin{equation}\label{eq:cohn7}
\int_\TTT \phi|h_\lambda^\Theta|^2\,dm=
\langle \phi h_\lambda^\Theta,h_\lambda^\Theta \rangle=
\langle P_\Theta\phi h_\lambda^\Theta,h_\lambda^\Theta \rangle
=\langle A_\phi^\Theta h_\lambda^\Theta,h_\lambda^\Theta\rangle.
\end{equation}
It is shown in~\cite{CohnPac82} that, for $\Theta\in (CLS)$, a positive $\mu$ satisfies $\sup_{\lambda\in\DDD} \|h^\Theta_\lambda\|_{L^2(\mu)}<\infty$ if and only if it  is a Carleson measure for $\Kth$. Thus (3) implies that $\phi\,dm$ is a Carleson measure for $\Kth$, which has been noted above to be equivalent to $\Ath_\phi$ bounded; so $(1)\Longleftrightarrow(3)$.

On the other hand, it is proved in \cite{Cohn-JOT86} that if $\Ath_\phi$ is bounded, then
there are functions $v\in L^\infty(\TTT)$ and $h\in H^2$ such that $\phi=\Re(v+\Theta h)$.
Write then
\[
\phi=\Re v+\frac{1}{2}(\Theta h+\bar\Theta\bar h),
\]
which implies that $\phi-\Re v\in \Theta H^2+\overline{\Theta H^2}$. Therefore $A_\phi^\Theta=A_{\Re v}^\Theta$ and $\Re v\in L^\infty(\TTT)$. Thus the last remaining implication $(1)\Longrightarrow(4)$ is proved.
\end{proof}

\begin{rem}
In~\cite{CohnPac82}, Cohn asked the following question: let $\Theta$ be an inner function and let $\mu$ be a positive measure on $\TTT$ such that the singular part of $\mu$ is supported on a subset of $\TTT\setminus\sigma(\Theta)$; is it sufficient to have 
\[
\sup_{\lambda\in\DDD}\int_\TTT|h_\lambda^\Theta|^2\,d\mu<+\infty,
\]
to deduce that $\mu$ is a Carleson measure for $K_\Theta$? In~\cite{NV02} Nazarov 
and Volberg construct a counterexample to this question with a measure $\mu$ of the 
form $d\mu=\phi\,dm$ where $\phi$ is some positive function in $L^2$. In our context, 
this means that they provide an inner function $\Theta$ and a positive function 
$\phi\in L^2$ such that 
\begin{equation}\label{eq:condition-faible-RKT}
\sup_{\lambda\in\DDD}|\<\Ath_\phi h_\lambda^\Theta,h_\lambda^\Theta\>|<+\infty,
\end{equation}
while $\Ath_\phi$ is not bounded. But the condition 
\eqref{eq:condition-faible-RKT} is obviously weaker 
than $\rho_r(\Ath_\phi)<+\infty$ (note that since $\phi$ is positive, 
the truncated Toeplitz operator is positive and $\rho_r(\Ath_\phi)=\rho(\Ath_\phi)$). 
Thus an answer to Question 3 does not follow from the Nazarov--Volberg result.
\end{rem}

\begin{rem}
It is shown by Aleksandrov \cite[Theorem 1.2]{Aleksandrov} 
that the condition 
\[
\sup_{\lambda \in \DDD}\frac{\|k_\lambda^\Theta\|_\infty}{\|k_\lambda^\Theta\|_2^2}< +\infty
\]
is equivalent to $\Theta\in (CLS)$.
On the other hand, as we have seen in Theorem~\ref{th:generalcounter}, the condition 
\[
\sup_{\lambda\in \DDD} 
\frac{\|k_\lambda^\Theta\|_p}{\|k_\lambda^\Theta\|_2^2} 
= +\infty
\]
for some $p\in (2,\infty)$ implies that
there exists a bounded  operator in $\TT(K_\Th)$ without a bounded symbol. 
Therefore, based on Theorem~\ref{th:positive} and  
Theorem~\ref{th:generalcounter}, it seems reasonable to state the following conjecture.

\begin{conjecture}
Let $\Theta$ be an inner function. Then any bounded truncated Toeplitz
operator has a bounded symbol if and only if $\Theta\in (CLS)$.
\end{conjecture}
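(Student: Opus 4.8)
The conjecture asserts an equivalence, so I would attack the two implications separately, starting with the one on which the paper already has almost all the ingredients.

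\textbf{The direction ``every bounded operator has a bounded symbol'' $\Rightarrow\Theta\in (CLS)$.} I would argue by contraposition. Suppose $\Theta\notin (CLS)$; by Aleksandrov's theorem (\cite[Theorem 1.2]{Aleksandrov}, quoted in the remark above) this is equivalent to
\[
\sup_{\lambda\in\DDD}\frac{\|k^\Theta_\lambda\|_\infty}{\|k^\Theta_\lambda\|_2^2}=+\infty .
\]
Theorem~\ref{th:generalcounter} already produces a bounded truncated Toeplitz operator \emph{without} a bounded symbol as soon as the same supremum, with $\|\cdot\|_\infty$ replaced by $\|\cdot\|_p$, is infinite for \emph{some} finite $p>2$. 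Since $\|k^\Theta_\lambda\|_p\le\|k^\Theta_\lambda\|_\infty$ always holds on $\TTT$, the $L^p$-ratio is dominated by the $L^\infty$-ratio, so the whole implication reduces to the purely function-theoretic statement that the $L^\infty$ blow-up forces an $L^p$ blow-up for some $p<\infty$. The plan is to prove this by analysing the shape of $k^\Theta_\lambda$ on $\TTT$ along a sequence $\lambda_n$ realizing the divergence: one expects $|k^\Theta_{\lambda_n}|$ to concentrate on a small arc, and a quantitative ``peak'' estimate (height versus width) should convert the $L^\infty$ divergence into an $L^p$ divergence, via the explicit size estimates already behind~\eqref{eq:cohn}.

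\textbf{The direction $\Theta\in (CLS)\Rightarrow$ every bounded operator has a bounded symbol.} Here the only result currently in hand is Theorem~\ref{th:positive}, which settles the case of a \emph{positive} symbol when $\Theta\in (CLS)$. My plan is to try to reproduce, for a general one-component $\Theta$, the mechanism used in Section~\ref{mysection:positive-results} for $z^N$ and for the elementary singular functions. In both of those cases the engine was a \emph{symbol-splitting device}: a family of convolution (Fourier-multiplier) operators $\eta_k*\phi$ satisfying $\rho(A^\Theta_{\eta_k*\phi})\le\|\eta_k\|_1\,\rho(A^\Theta_\phi)$, which cut the symbol into analytic, coanalytic, and ``central'' pieces and allowed an appeal to Corollary~\ref{co:lcr}. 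So first I would search for an analogue of this Fej\'er-type truncation adapted to the level-set geometry of a one-component $\Theta$, exploiting the fact (Aleksandrov, together with the trivial bound $\|k^\Theta_\lambda\|_\infty\ge\|k^\Theta_\lambda\|_2^2$) that for such $\Theta$ one has $\|k^\Theta_\lambda\|_\infty\asymp\|k^\Theta_\lambda\|_2^2$ --- precisely the estimate needed to run the central-symbol argument of Theorem~\ref{th:centralsymbol}.

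\textbf{Main obstacle.} I expect the serious difficulty to lie in the second direction. Producing a bounded symbol for an \emph{arbitrary} bounded truncated Toeplitz operator is exactly the hard problem that the rest of the paper resolves only in highly structured situations ($z^N$, $b_\alpha^N$, elementary singular functions), and in each of those the argument rested on an explicit translation or multiplier symmetry of the model space that a general one-component inner function need not possess; replacing that symmetry by a genuinely intrinsic splitting valid under $(CLS)$ is where I anticipate the plan could stall. In the first direction the obstacle is more contained: one must rule out the a priori possibility that $\|k^\Theta_\lambda\|_p/\|k^\Theta_\lambda\|_2^2$ remains bounded for every finite $p$ while $\|k^\Theta_\lambda\|_\infty/\|k^\Theta_\lambda\|_2^2$ diverges, which is a delicate but, I believe, tractable comparison of the $L^p$ and $L^\infty$ sizes of the reproducing kernels.
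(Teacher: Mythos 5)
This statement appears in the paper only as a \emph{conjecture}: the authors give no proof in either direction, offering precisely the circumstantial evidence you cite (Aleksandrov's characterization of $(CLS)$ via $\sup_\lambda\|k_\lambda^\Theta\|_\infty/\|k_\lambda^\Theta\|_2^2<\infty$, Theorem~\ref{th:generalcounter}, and Theorem~\ref{th:positive}). So there is no argument in the paper to compare yours with; what can be judged is whether your plan bridges the gap between that evidence and the conjecture, and it does not. In the contrapositive direction your reduction runs the inequality the wrong way: from $\|k_\lambda^\Theta\|_p\le\|k_\lambda^\Theta\|_\infty$ you only learn that the $L^p$ ratio is \emph{dominated} by the $L^\infty$ ratio, so divergence of the latter gives no information about the former. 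To invoke Theorem~\ref{th:generalcounter} you must prove that $\Theta\notin(CLS)$ forces $\sup_\lambda\|k_\lambda^\Theta\|_p/\|k_\lambda^\Theta\|_2^2=\infty$ for some finite $p>2$, i.e.\ a reverse comparison along the bad sequence $\lambda_n$. Your ``peak height versus width'' heuristic would require a lower bound on the measure of the set where $|k_{\lambda_n}^\Theta|$ is comparable to its supremum, uniform in $n$, and nothing in the paper supplies it --- \eqref{eq:cohn} concerns boundary points $\zeta\in E(\Theta)$ and is irrelevant to interior $\lambda$. It is consistent with everything proved here that all the $L^p$ ratios stay bounded while the $L^\infty$ ratio diverges; excluding that scenario is itself an open function-theoretic problem, not a routine comparison.

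The second direction has a deeper gap. The splitting engine of Theorems~\ref{th:toeplitz} and~\ref{th:singular} is not a soft approximation device that one can transplant: it rests on the fact that the rotation group (resp.\ the translation group) acts unitarily on $K_{z^N}$ (resp.\ on $\bm{K}_{\bm\Theta}$ for $\bm\Theta(w)=e^{iw}$) and permutes reproducing kernels and difference quotients as in \eqref{eq:action-tau-t-noyau}; this symmetry is exactly what yields $\rho(A^\Theta_{\eta*\phi})\le\|\eta\|_1\,\rho(A^\Theta_\phi)$ and hence hypothesis (b) of Corollary~\ref{co:lcr}. A general one-component inner function carries no such group of symmetries, no natural convolution, and no evident inner function $\theta$ with $\theta^3$ dividing $z\Theta$ and $\Theta$ dividing $\theta^4$ to feed into Theorem~\ref{th:centralsymbol}. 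The estimate $\|k_\lambda^\Theta\|_\infty\asymp\|k_\lambda^\Theta\|_2^2$ that you extract from Aleksandrov's theorem is indeed necessary for the central-symbol mechanism, but it is nowhere near sufficient to construct the decomposition $\phi=\phi_1+\phi_2+\phi_3$ with controlled $\rho$. Nor does Theorem~\ref{th:positive} help: its proof goes through Cohn's representation $\phi=\Re(v+\Theta h)$, which is specific to positive symbols and to Carleson-measure arguments. In short, both halves of your plan stall at exactly the points where the conjecture is genuinely open, as you yourself anticipated.
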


\end{rem}

\def\cprime{$'$}

\end{document}